\documentclass{article}
\usepackage[english]{babel}
\usepackage{amsthm}
\usepackage{comment}
\usepackage[utf8]{inputenc}
\usepackage{float}
\usepackage{amssymb}
\usepackage{amsmath}
\usepackage[T1]{fontenc}
\usepackage[colorlinks]{hyperref}
\usepackage{amsxtra}
\usepackage{amstext}
\usepackage{amssymb}
\usepackage{amsthm}
\usepackage{graphicx}
\usepackage{amsfonts}
\usepackage{multicol}
\usepackage{setspace}
\usepackage[usestackEOL]{stackengine}
\usepackage[noabbrev]{cleveref}
\usepackage{MnSymbol}

\theoremstyle{plain}
\newtheorem{theorem}{Theorem}[section]

\newtheorem{lemma}[theorem]{Lemma}
\newtheorem{proposition}[theorem]{Proposition}
\newtheorem{claim}[theorem]{Claim}

\theoremstyle{definition}
\newtheorem{definition}[theorem]{Definition}

\theoremstyle{remark}
\newtheorem{remark}{Remark}

\newtheorem{example}{Example}

\title{Concentration inequality around the thermal equilibrium measure of Coulomb gases}
\author{David Padilla-Garza}

\begin{document}

\maketitle
\begin{abstract}
    This article deals with Coulomb gases at an intermediate temperature regime, which are governed by a Gibb's measure in which the inverse temperature is much larger than $\frac{1}{N},$ where $N$ is the number of particles. Our main result is a concentration inequality around the thermal equilibrium measure, stating that with probability exponentially close to $1,$ the empirical measure is $\mathcal{O}\left(  \frac{1}{N^{\frac{1}{d}}}\right)$ close to the thermal equilibrium measure. We also prove that this concentration inequality is optimal in some sense. The main new tool are functional inequalities that allow us to compare the bounded Lipschitz norm of a measure to its $H^{-1}$ norm in some cases when the measure does not have compact support.
\end{abstract}
\section{Introduction}

\subsection{Introduction to Coulomb gases}

Coulomb gases are a system of particles that interact via a repulsive kernel, and are confined by an external potential. Let $X_{N}=(x_{1}, x_{2},...x_{N})$ with $x_{i} \in \mathbf{R}^{d}$ and let
\begin{equation}\label{generalformulaforcoulombgases}
    \mathcal{H}_{N}\left( X_{N}\right)=\frac{1}{2} \sum_{1\leq i\neq j \leq N}g\left( x_{i}-x_{j} \right)+N \sum_{i=1}^{N} V\left( x_{i} \right),
\end{equation}
where
\begin{equation}\label{coulombkernel}
\begin{cases}
g(x)=\frac{\overline{c}_{d}}{|x|^{d-2}} \text{ if } d \geq 3, \\
g(x)=-\overline{c}_{2}\log(|x|) \text{ if } d = 2
\end{cases}
\end{equation}
is the Coulomb kernel, i.e. $g$ satisfies
\begin{equation}
    \Delta g = \delta_{0},
\end{equation} 
where the laplacian operator $\Delta$ is defined as the divergence of the gradient. In equation \eqref{coulombkernel}, $\overline{c}_{d}$ is a constant that depends only on $d$. If $d=1,$ then $-\log(|x|)$ is not the fundamental solution of laplacian. Systems given by \eqref{generalformulaforcoulombgases} with $g(x)=-\log(|x|)$ in $d=1$ are called log gases. Consider the Gibbs measure on $\mathbf{R}^{d \times N}$
\begin{equation}\label{Gibbsmeasure}
    d\mathbf{P}_{N}=\frac{1}{Z_{{N}, \beta}} \exp\left( -  \beta \mathcal{H}_{N} \right) d X_{N},
\end{equation}
where
\begin{equation}
    Z_{N, \beta} = \int_{\mathbf{R}^{d \times N}}  \exp\left( - \beta \mathcal{H}_{N} \right) d\, X_{N}.
\end{equation}
In this notation, $X_{N}=\left( x_{1}, x_{2}, ... x_{N} \right)$, $\beta$ is the inverse temperature (which we assume to depend on $N$), and  $dX_{N}= dx_{1} dx_{2} ... dx_{N} $.

We will use the notation
\begin{equation}\label{meanfield}
    \mathcal{I}_{V}\left( \mu \right) = \frac{1}{2} \iint_{\mathbf{R}^{d} \times \mathbf{R}^{d}} g(x-y) d  \mu(x) \, d \mu(y) + \int_{\mathbf{R}^{d}} V(x) \, dx
\end{equation}
for the mean-field limit of $\mathcal{H}_{N}.$ The functional \ref{meanfield} has a unique minimizer in the space of probability measures, called the equilibrium measure and denoted by $\mu_{V}$ (see \cite{serfaty2015coulomb}). 
The empricial measure $\text{emp}_{N}$ is defined as 
\begin{equation}\label{definitionofempN}
    \text{emp}_{N}= \frac{1}{N}\sum_{i=1}^{N} \delta_{x_{i}}.
\end{equation}

Throughout the paper, we will use the notation
\begin{equation}\label{definitionofE}
    \mathcal{E}(\nu)=\iint_{\mathbf{R}^{d} \times \mathbf{R}^{d} \setminus \Delta} g(x-y) d  \nu(x) \, d \nu(y),
\end{equation}
where 
\begin{equation}
    \Delta = \{ (x,x) \in \mathbf{R}^{d} \times \mathbf{R}^{d} \}.
\end{equation}

The main purpose of this paper is to prove a rate of convergence of the empirical measure to the thermal equilibrium measure, defined as 
\begin{equation}\label{muthermal}
    \mu_{\beta}=\text{argmin}_{\mu} \left\{ \mathcal{I}_{V}\left( \mu \right)+\frac{1}{N\beta}\int_{\mathbf{R}^{d}} \mu \log \left( \mu \right) \right\},
\end{equation}
where $\mathcal{I}_{V}$ is given by \eqref{meanfield} and $\mu$ is taken on the set of probability measures. For existence and uniqueness of $\mu_{\beta}$ given by \eqref{muthermal}, see \cite{armstrong2019thermal}.

\section{Statement of main results}

We will need the following hypotheses on $\mu_{V}$:
\begin{itemize}
    \item[1.] $\mu_{V}$ has compact support, denoted by $\Sigma.$ 
    \item[2.] $\mu_{V}$ has a density with respect to Lebesgue measure which has $L^{\infty}$ regularity.
\end{itemize}
We also need the following hypotheses on $\beta:$
\begin{itemize}
    \item[1.] $N\beta \to \infty$
    \item[2.] There exists a compact set $K$ such that
    \begin{equation}\label{lastpropertyofbeta}
    \begin{cases}
            \int_{\mathbf{R}^{d} \setminus K} \exp\left( -cN\beta V(x) \right) \, dx \ll \frac{1}{N^{\frac{1}{d}}} \text{ if } d \geq 3,\\
            \int_{\mathbf{R}^{d} \setminus K} \exp\left( -cN\beta [V(x)-\log(|x|)] \right) \, dx \ll \frac{1}{N^{\frac{1}{2}}} \text{ if } d = 2,
    \end{cases}
    \end{equation}
    for all $c>0,$ where the notation $\ll$ means $o().$
     \item[3.] There exists a compact set $K$ such that
    \begin{equation}\label{lastpropertyofbeta2}
    \begin{cases}
        \mathcal{E}\left(\exp\left( -cN\beta V \right)\mathbf{1}_{\mathbf{R}^{d} \setminus K}\right)\ll \frac{1}{N^{\frac{1}{d}}},\text{ if } d \geq 3,\\
         \mathcal{E}\left(\exp\left( -cN\beta [V-\log(|x|)] \right)\mathbf{1}_{\mathbf{R}^{d} \setminus K}\right)\ll \frac{1}{N^{\frac{1}{2}}},\text{ if } d= 2.
    \end{cases}
    \end{equation}
    for all $c>0.$
\end{itemize}

Note that conditions $3$ is not a consequence of conditions $1$ and $2.$ As a counterexample take $\beta=\frac{\log(\log(N))}{N}.$ However, these are not very restrictive hypotheses. For example, they are satisfied if
\begin{equation}
    \beta=\frac{1}{N^{\alpha}} \quad \alpha \in (0,1)
\end{equation}
and there exists some compact set $K \subset \mathbf{R}^{d}$ and $c>0$ such that
\begin{equation}
    V(x) \geq c|x|^{s}
\end{equation}
for $x \notin K.$

Lastly, we need the following hypotheses on the potential $V:$
\begin{itemize}\label{conditionsonV}
    \item[1.] $V$ has $C^{2}$ regularity
    \item[2.] $V$ has growth at infinity
    \begin{equation}
        \begin{split}
            \lim_{|x| \to \infty} V(x)=\infty \ &\text{ if } d\geq 3\\
            \liminf_{|x| \to \infty} V(x)-\log(|x|)=\infty \ &\text{ if } d= 2.
        \end{split}
    \end{equation}
    \item[3.] $V$ is bounded from below. Without loss of generality, we assume $V \geq 0.$
    \item[4.] 
    \begin{equation}
        \int_{|x|\geq 1} \exp\left( -N\beta V \right) \, dx < \infty
    \end{equation}
    if $d \geq 3,$ and
    \begin{equation}
        \int_{|x|\geq 1} \exp\left( -N\beta V -\log(|x|) \right) \, dx < \infty
    \end{equation}
    if $d=2.$
\end{itemize}

We need a brief definition before stating the main theorem.
\begin{definition}
For any real valued function $f$ which is measurable and weakly differentiable, define the $W^{1,\infty}$ norm as
\begin{equation}
    \|f\|_{W^{1,\infty}} = \max \{\|f\|_{L^{\infty}}, \|\nabla f\|_{L^{\infty}}\}.
\end{equation}
Define the space $W^{1,\infty} (\mathbf{R}^{d})$ as $W^{1,\infty} (\mathbf{R}^{d})=\{f| \|f\|_{W^{1,\infty}} < \infty\}$. For a measure $\mu$ on $\mathbf{R}^{d},$ define the bounded Lipschitz norm as
\begin{equation}
    \|\mu \|_{BL}=\sup_{f \in W^{1,\infty} (\mathbf{R}^{d})} \frac{\int f \, d\mu}{\|f\|_{W^{1\infty}}}.
\end{equation}
\end{definition}

Our reason for working with this norm is that the topology it induces is equivalent to the topology of weak convergence, as we state in the following remark. For a proof, see \cite{alastuey1981classical}.
\begin{remark}
Let $\mu_{N}$ be a sequence of measures on $\mathbf{R}^{d}.$ Then $\mu_{N} \to \mu$ weakly in the sense of measures, i.e.
\begin{equation}
    \mu_{N}(A) \to \mu(A)
\end{equation}
for any measurable $A \subset \mathbf{R}^{d}$ such that $\partial A$ has measure $0$, if and only if $\|\mu_{N} - \mu\|_{BL}\to 0.$ 
\end{remark}

The main result in this paper is this theorem:
\begin{theorem}\label{rateofconvergencethermalequilibrium}
Let $d \geq 2$ and assume that $\frac{1}{N} \ll \beta,$ let
\begin{equation}
    \text{emp}_{N}=\frac{1}{N} \sum_{i=1}^{N} \delta_{x_{i}},
\end{equation}
then there exists a constant $k^{*}>0$ such that
\begin{equation}
   \lim_{N \to \infty} \mathbf{P}_{N, \beta} \left( \parallel  \text{emp}_{N}-\mu_{\beta} \parallel_{BL} \leq \frac{k^{*}}{N^{\frac{1}{d}}} \right)=1.
\end{equation}
More specifically, there exist constants $c_{1}, c_{2}, c_{3}>0$ such that for any $k>0,$ we have
\begin{equation}
    \mathbf{P}_{N, \beta} \left( \parallel  \text{emp}_{N}-\mu_{\beta} \parallel_{BL} \leq\frac{k}{N^{\frac{1}{d}}} \right) \geq 1- \exp\left(- N^{2-\frac{2}{d}}\beta\left( c_{1}(k-c_{2})_{+}^{2}-c_{3} \right) \right), 
\end{equation}
where
\begin{equation}
    x_{+}=\frac{1}{2} \left( x+|x| \right).
\end{equation}
\end{theorem}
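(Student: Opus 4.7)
The plan is to adapt the modulated-energy method of Serfaty and collaborators to the thermal setting, and then convert the resulting $\dot H^{-1}$ control into bounded-Lipschitz control using the functional inequalities advertised in the abstract. The starting point is a splitting formula around $\mu_\beta$. Since the entropy term in \eqref{muthermal} forces $\mu_\beta > 0$ everywhere, the Euler-Lagrange equation gives the identity
$$h^{\mu_\beta}(x) + V(x) + \frac{1}{N\beta}\log\mu_\beta(x) = c_\beta$$
on all of $\mathbf{R}^d$, where $h^{\mu_\beta}(x) := \int g(x-y)\,d\mu_\beta(y)$ and $c_\beta$ is the Lagrange multiplier. Substituting this into the standard algebraic identity relating $\mathcal{H}_N$ to the modulated energy $F_N(X_N,\mu) := \iint_{\Delta^c} g(x-y)\,d(\text{emp}_N-\mu)(x)\,d(\text{emp}_N-\mu)(y)$ yields
$$\mathcal{H}_N(X_N) = \frac{N^2}{2} F_N(X_N,\mu_\beta) - \frac{1}{\beta}\sum_{i=1}^N \log\mu_\beta(x_i) + N^2 C_\beta,$$
so that the Gibbs measure rewrites as
$$d\mathbf{P}_N = \frac{1}{K_{N,\beta}} \exp\!\left(-\tfrac{\beta N^2}{2} F_N(X_N,\mu_\beta)\right) d\mu_\beta^{\otimes N}(X_N),$$
reducing the problem to controlling $F_N$ under i.i.d.\ sampling from $\mu_\beta$.

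Next I would apply a Serfaty-type lower bound on $F_N$: after smearing the point masses at scale $\eta \sim N^{-1/d}$, one has
$$F_N(X_N,\mu_\beta) \geq -\frac{C\|\mu_\beta\|_{L^\infty}}{N^{2/d}} + \frac{1}{C}\|\text{emp}_N^{(\eta)} - \mu_\beta\|_{\dot H^{-1}}^2.$$
Here comes the new ingredient: invoke the functional inequality comparing $\|\cdot\|_{BL}$ to $\|\cdot\|_{\dot H^{-1}}$ (under the tail hypotheses on $\beta$) to translate the event $\{\|\text{emp}_N - \mu_\beta\|_{BL} \geq k/N^{1/d}\}$ into the quantitative lower bound $F_N \geq (c_1 k^2 - c_3)/N^{2/d}$ up to a smearing error that is absorbed into the constants.

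To close the argument I need a matching lower bound on the normalization $K_{N,\beta}$. This follows from Jensen's inequality once one shows $\mathbf{E}_{\mu_\beta^{\otimes N}}[F_N] = O(N^{-2/d})$; the expected modulated energy is controlled by $\|\mu_\beta\|_{L^\infty}$ in the bulk and by hypotheses \eqref{lastpropertyofbeta}--\eqref{lastpropertyofbeta2} for the contributions from the tails of $\mu_\beta$ outside $\text{supp}\,\mu_V$. Dividing the upper bound on the numerator by the lower bound on $K_{N,\beta}$ then gives the stated exponential concentration.

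The main obstacle will be the functional inequality comparing $\|\cdot\|_{BL}$ to $\|\cdot\|_{\dot H^{-1}}$ when the measure in question is not compactly supported. In the classical setting the comparison comes from Poincaré-type inequalities on a bounded domain, but $\mu_\beta$ has full support; although its mass far from $\text{supp}\,\mu_V$ is exponentially small, the non-compactness invalidates the standard argument. The tail conditions \eqref{lastpropertyofbeta} and \eqref{lastpropertyofbeta2} on $\beta$ are what should allow the estimate to be localized to a fixed compact set up to a negligible error, and finding the sharp formulation of this inequality is, as the abstract indicates, the central technical innovation of the paper.
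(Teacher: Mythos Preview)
Your proposal follows essentially the same architecture as the paper: the splitting around $\mu_\beta$, the smearing/regularization at scale $N^{-1/d}$, the localization of the $\dot H^{-1}$--$BL$ comparison via the tail hypotheses, and a lower bound on $K_{N,\beta}$ coming from the trial state $\mu_\beta^{\otimes N}$. For $d\geq 3$ this works exactly as you describe, and your Jensen argument for $K_{N,\beta}$ is equivalent to the paper's variational computation (both yield $\log K_{N,\beta}\geq \tfrac{\beta N}{2}\mathcal{E}(\mu_\beta)\geq 0$).

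There is, however, a genuine gap in dimension $d=2$. The smearing lower bound you wrote,
\[
F_N(X_N,\mu_\beta)\ \geq\ -\frac{C\|\mu_\beta\|_{L^\infty}}{N^{2/d}}+\frac{1}{C}\bigl\|\mathrm{emp}_N^{(\eta)}-\mu_\beta\bigr\|_{\dot H^{-1}}^2,
\]
is not correct for $d=2$: the self-energy of a smeared point at scale $\eta=N^{-1/2}$ is $g(\eta)=\tfrac12\log N$, so the error term is $-\tfrac{C}{N}-\tfrac{\log N}{2N}$, not $O(N^{-1})$. After multiplying by $\beta N^2$, this produces a $\tfrac14\beta N\log N$ term that must be absorbed by $\log K_{N,\beta}$. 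Your Jensen bound gives only $\log K_{N,\beta}\geq \tfrac{\beta N}{2}\mathcal{E}(\mu_\beta)$, which is $O(\beta N)$ (and possibly negative, since the $2$-dimensional kernel changes sign) and cannot cancel $\beta N\log N$. The paper resolves this by invoking the sharper estimate $\log K_{N,\beta}\geq \tfrac14\beta N\log N - c_V\beta N$, taken from \cite{serfaty2020gaussian} or \cite{leble2018fluctuations}; this is precisely what makes the $\log N$ terms cancel. You should either cite that result or be prepared to reprove it --- Jensen alone will not close the argument in two dimensions.
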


\section{Applications and motivation}

Coulomb gases have a wide range of applications in physics and mathematics, see \cite{serfaty2017microscopic} for a further discussion. Let us remark that despite the wide attention that Coulomb gases have received, the regime $\frac{1}{N} \ll \beta \ll 1$ remains largely unexplored.

Coulomb gases have applications in Statistical Physics and Quantum Mechanics (\cite{alastuey1981classical},\cite{jancovici1993large},\cite{jancovici1995classical},\cite{sari1976nu}, \cite{dean2016noninteracting}, \cite{girvin2005introduction},\cite{stormer1999fractional},  \cite{sandier2008vortices}, \cite{penrose1972thermodynamic},\cite{jancovici1993large},\cite{lieb1972constitution},\cite{lieb2005thermodynamic}). In all cases, the interactions governed by the Gibbs measure $\mathbf{P}_{N, \beta}$ are considered difficult systems because the interactions are truly long-range, singular, and the points are not constrained to live on a lattice. As always in statistical mechanics \cite{huang1987statistical}, one would like to understand if there are phase transitions for particular values of the (inverse) temperature $\beta$. For the systems studied here, one may expect what physicists call a liquid for small $\beta$, and a crystal for large $\beta$.

Apart from its direct connection with physics, the Gibbs measure \eqref{Gibbsmeasure} is related to random matrix theory (we refer to \cite{forrester2010log} for a comprehensive treatment). Random matrix theory (RMT) is a relatively old theory, pioneered by statisticians and physicists such as Wishart, Wigner and Dyson, and originally motivated by the understanding of the spectrum of heavy atoms, see \cite{mehta2004random}. For more recent mathematical reference see \cite{anderson2010introduction},\cite{deift1999orthogonal},\cite{forrester2010log}. The main question asked by RMT is: what is the law of the spectrum of a large random matrix? As first noticed in the foundational papers of \cite{wigner1993characteristic},\cite{dyson1962statistical}, in the particular cases $d=1,2$ the Gibbs measure \eqref{Gibbsmeasure} corresponds in some particular instances to the joint law of the eigenvalues (which can be computed algebraically) of some famous random matrix ensembles: 
\begin{itemize}
    \item[$\bullet$] For $d=2$, $\beta = 2$ and $V(x) = |x|^2$, \eqref{Gibbsmeasure} is the law of the (complex) eigenvalues of an $N \times N$ matrix where the entries are chosen to be normal Gaussian i.i.d. This is called the Ginibre ensemble.
    \item[$\bullet$] For $d=1$, $\beta = 2$ and $V(x) = \frac{x^2}{2}$, \eqref{Gibbsmeasure} is the law of the (real) eigenvalues of an $N \times N$ Hermitian matrix with complex normal Gaussian i.i.d. entries. This is called the Gaussian Unitary Ensemble.
    \item[$\bullet$] For $d=1$, $\beta =1$ and $V(x) = \frac{x^2}{2}$, \eqref{Gibbsmeasure} is the law of the(real) eigenvalues of an $N \times N$ real symmetric matrix with normal Gaussian i.i.d. entries. This is called the Gaussian Orthogonal Ensemble. 
    \item[$\bullet$] For $d=1$, $\beta = 4$ and $V(x) = \frac{x^2}{2}$, \eqref{Gibbsmeasure} is the law of the eigenvalues of an $N \times N$ quaternionic symmetric matrix with normal Gaussian i.i.d. entries. This is called the Gaussian Symplectic Ensemble.
\end{itemize}

\section{Comparison with literature and discussion of the temperature regime}

This paper deals with the case $\frac{1}{N} \ll \beta.$ The cases $\beta =C  N^{\frac{2}{d}-1}$ has been studied extensively \footnote{Note that the authors may use a different definition of the Gibbs measure. Hence, $\beta = CN^{\frac{2}{d}-1}$ may correspond to $\beta$ constant.} (see for example \cite{armstrong2021local}, \cite{hardin2018large}, \cite{leble2018fluctuations}, \cite{bekerman2018clt}, \cite{leble2017large}, \cite{rougerie2016higher}, \cite{arous1997large}, \cite{claeys2021much}). The regime $\beta=\frac{c}{N}$ has also been studied in the literature (see for example \cite{garcia2019concentration}, \cite{hardy2021clt}, \cite{lambert2021poisson}).

The regime $\beta=\frac{\beta_{0}}{N}$ has been studied, for example, in  \cite{garcia2019concentration, garcia2019large}. In this case the effect of temperature is so big that particles do not converge to the equilibrium measure. More precisely,  $emp_{N}$ (defined by \eqref{definitionofempN}) converges a.s. under the Gibbs measure to $\mu_{\beta_{0}},$ defined as 
\begin{equation}
    \mu_{\beta_{0}}=\text{argmin}_{\mu} F(\mu), 
\end{equation}
where 
\begin{equation}\label{freeenergyfunctional}
    F(\mu):= \frac{1}{2} \mathcal{E}(\mu)+ \int_{\mathbf{R}^{d}} V d\mu + \frac{1}{\beta_{0}} \text{ent}[\mu],
\end{equation}
$\mathcal{E}$ was defined by \eqref{definitionofE} and the argmin is taken over probability measures. In equation \eqref{freeenergyfunctional}, $\text{ent}[\mu]$ is defined as 
\begin{equation}
    \text{ent}[\mu]=\int_{\mathbf{R}^{d}} \mu \log\mu \, dx.
\end{equation}
Moreover, $emp_{N}$ satisfies a LDP with rate function $F-\min F$(\cite{garcia2019large}).

The regime studied in the present paper stands in the middle of the two regimes studied before. This is a regime in which, unlike the $\beta = \frac{\beta_{0}}{N}$ the effect of temperature is weak enough that the particles remain confined to a compact subset, in other words, $emp_{N}$ converges weakly to $\mu_{V}$ a.s. under the Gibbs measure.

Our main result is a lower bound on the probability that the empirical measure is close to the thermal equilibrium measure. Similar results were obtained in \cite{chafai2018concentration} for the equilibrium measure. The main difference in the result is that in the current work we derive a concentration inequality around the thermal equilibrium measure, not the equilibrium measure. The main difference in the techniques is that, in our case, we must compare the bounded Lipschitz norm of a measure to its electric energy even if a measure has non-compact support. A substantial part of this paper is devoted to proving an inequality which allows this comparison.

Before beginning the proof section, we make two remarks: one is that, throughout the paper, $C$ will denote a generic constant which depends only on the input parameters, and may change from line to line. We will also make the following abuse of notation: we will not distinguish between a measure and its density. 

\section{Preliminaries}

\subsection{Approximating continuous measures by atomic measures}

It is natural to ask if it is possible to approximate the empirical measure to better accuracy that $\mathcal{O}\left(\frac{1}{N^{\frac{1}{d}}} \right).$ The next proposition shows this is not possible, at least with a family of measures that has reasonable regularity.
\begin{proposition}\label{optimalorderofmagnitude}
Let $\mu_{N}$ be a sequence of absolutely continuous probability measures, with density $d\mu_{N}.$ Assume that
\begin{equation}
     d\mu_{N}  \in L^{\infty}
\end{equation}
and that
\begin{equation}
    \text{sup}_{N} \{ \| d\mu_{N} \|_{L^{\infty}}  \} \leq M.
\end{equation}
Let 
\begin{equation}
    \nu_{N}=\frac{1}{N} \sum_{i=1}^{N} \delta_{x_{i}^{N}},
\end{equation}
Then there exists $k>0$ such that
\begin{equation}
    \parallel \nu_{N}-\mu_{N} \parallel_{BL} \geq \frac{k}{N^{\frac{1}{d}}}.
\end{equation}
\end{proposition}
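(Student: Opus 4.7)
The plan is to build an explicit test function $\phi$ that distinguishes $\nu_N$ from $\mu_N$. The intuition is that $\nu_N$ concentrates mass $\tfrac{1}{N}$ at each atom, while $\mu_N$, having density bounded by $M$, cannot put more than $M c_d r^d$ mass in any ball of radius $r$; so in a tube of radius $r \sim N^{-1/d}$ around the atoms, the atomic measure wins. This mismatch, measured against a function of Lipschitz constant $\sim 1/r$, gives a ratio of order $r \sim N^{-1/d}$.

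Concretely, let $S_N = \{x_1^N, \dots, x_N^N\}$, let $c_d$ be the volume of the unit ball in $\mathbf{R}^d$, and fix $r = (2Mc_d N)^{-1/d}$. Define
\begin{equation}
    \phi(x) = \max\!\left( 0,\, 1 - \frac{d(x, S_N)}{r} \right),
\end{equation}
where $d(x, S_N) = \min_i |x - x_i^N|$. Then I first verify the basic properties of $\phi$: it takes values in $[0,1]$, equals $1$ on $S_N$, is $\tfrac{1}{r}$-Lipschitz because $x \mapsto d(x, S_N)$ is $1$-Lipschitz, and is supported in $\bigcup_{i=1}^N \overline{B(x_i^N, r)}$. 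In particular, for $N$ large enough that $r < 1$ one has $\|\phi\|_{W^{1,\infty}} = 1/r$.

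Next I compute the two integrals. Since $\phi \equiv 1$ on $S_N = \mathrm{supp}(\nu_N)$,
\begin{equation}
    \int \phi \, d\nu_N = 1.
\end{equation}
On the other hand, using $\|\phi\|_{L^\infty} \leq 1$ and subadditivity of Lebesgue measure,
\begin{equation}
    \int \phi \, d\mu_N \leq M \cdot \Big| \bigcup_{i=1}^N B(x_i^N, r) \Big| \leq M \cdot N c_d r^d = \tfrac{1}{2},
\end{equation}
by the choice of $r$. Hence $\int \phi \, d(\nu_N - \mu_N) \geq \tfrac{1}{2}$, and plugging into the definition of the bounded Lipschitz norm,
\begin{equation}
    \| \nu_N - \mu_N \|_{BL} \geq \frac{\int \phi \, d(\nu_N - \mu_N)}{\|\phi\|_{W^{1,\infty}}} \geq \frac{1/2}{1/r} = \frac{r}{2} = \frac{k}{N^{1/d}},
\end{equation}
with $k = \tfrac{1}{2}(2Mc_d)^{-1/d}$, which depends only on $M$ and $d$.

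There is no real obstacle here; the only points that need a line of care are (i) that atoms may coincide, which is harmless since $\int \phi \, d\nu_N = 1$ regardless of multiplicities, and (ii) the volume bound uses subadditivity rather than disjointness, so overlapping balls do not hurt us (in fact they help by making $\mathrm{supp}(\phi)$ smaller). For small $N$ one may have $r \geq 1$, but then $\|\phi\|_{W^{1,\infty}} = 1$ and the bound is even easier; equivalently, one can absorb finitely many exceptional $N$ into the constant $k$.
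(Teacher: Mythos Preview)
Your proof is correct and follows essentially the same approach as the paper: both build a tent function around the atoms with support in balls of radius $\sim N^{-1/d}$ and use the $L^\infty$ bound on $d\mu_N$ to control the mass $\mu_N$ places there. Your test function $\phi$ is just the paper's $\varphi_\lambda$ rescaled by $r^{-1}$ (so that $\|\phi\|_{L^\infty}=1$ instead of $\|\nabla\varphi_\lambda\|_{L^\infty}=1$), and you bound $\int\phi\,d\mu_N$ directly rather than via the auxiliary measure $\widetilde\mu_N$; this makes your version slightly more streamlined but yields the identical constant $k=\tfrac{1}{2}(2Mc_d)^{-1/d}$.
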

\begin{proof}

Let
\begin{equation}
    \overline{X}_{N}=\bigcup_{i=1}^{N} \{x_{i}^{N}\}.
\end{equation}

For each $\lambda>0,$ define the function $ \varphi_{\lambda}: \mathbf{R}^{d} \to \mathbf{R}^{+}$ as
\begin{equation}
    \varphi_{\lambda}(x)=\left( \frac{\lambda}{N^{\frac{1}{d}}} - \text{dist}(x, \overline{X}_{N}) \right)_{+}.
\end{equation}

Note that,
for every $\lambda>0,$ the function $\varphi_{\lambda}$ satisfies
\begin{equation}
    \| \varphi_{\lambda} \|_{L^{\infty}}=\frac{\lambda}{N^{\frac{1}{d}}}, \quad   \| \nabla \varphi_{\lambda} \|_{L^{\infty}}=1.
\end{equation}
Also note that
\begin{equation}
    \text{supp}(\varphi_{\lambda}) = \bigcup_{i=1}^{N}  \overline{B} \left(x_{i}^{N}, \frac{\lambda}{N^{\frac{1}{d}}} \right).
\end{equation}

We will now show that for some value of $\lambda$ to be determined later, we have
\begin{equation}
    \left| \int_{\mathbf{R}^{d}} \varphi_{\lambda} \, d(\nu_{N}-\mu_{N}) \right| \geq \frac{k}{N^{\frac{1}{d}}},
\end{equation}
for $k$ to be determined later (independent of $\lambda$). 

In order to do this, we introduce the function
\begin{equation}
    \widetilde{\mu}_{N}=M \sum_{i=1}^{N} \mathbf{1}_{{B} \left(x_{i}^{N}, \frac{\lambda}{N^{\frac{1}{d}}} \right)}.
\end{equation}

We recall the abuse of notation of not distinguishing between a measure and its density. 

We now compute
\begin{equation}
    \begin{split}
      \int_{\mathbf{R}^{d}} \varphi_{\lambda} \, d(\nu_{N}-\widetilde{\mu}_{N})  &= \frac{\lambda}{N^{\frac{1}{d}}} -\int_{\mathbf{R}^{d}} \varphi_{\lambda} \, d\widetilde{\mu}_{N}\\
      &= \frac{\lambda}{N^{\frac{1}{d}}} - M \left( \sum_{i=1}^{N} \int_{\overline{B} \left(x_{i}^{N}, \frac{\lambda}{N^{\frac{1}{d}}} \right)} \left(\frac{\lambda}{N^{\frac{1}{d}}} - \text{dist}(x, \overline{X}_{N}) \right) \, dx \right)\\
      & \geq \frac{\lambda}{N^{\frac{1}{d}}} - M \left( \sum_{i=1}^{N} \int_{\overline{B} \left(x_{i}^{N}, \frac{\lambda}{N^{\frac{1}{d}}} \right)} \frac{\lambda}{N^{\frac{1}{d}}} \, dx \right)\\
      &=\frac{\lambda}{N^{\frac{1}{d}}} - M \frac{\lambda}{N^{\frac{1}{d}}} \left( N k_{d} \left( \frac{\lambda}{N^{\frac{1}{d}}} \right)^{d}\right)\\
      &=\frac{\lambda}{N^{\frac{1}{d}}} -  \frac{M\lambda^{d+1} k_{d}}{N^{\frac{1}{d}}}\\
      &=\frac{\lambda}{N^{\frac{1}{d}}}  \left( 1-Mk_{d}\lambda^{d} \right), 
    \end{split}
\end{equation}
where $k_{d}$ is the volume of the $d-$dimensional unit ball. By taking 
\begin{equation}\label{speciallambda}
    \lambda=\frac{1}{\left( 2Mk_{d} \right)^{\frac{1}{d}}},
\end{equation}
we get that
\begin{equation}
    \begin{split}
        \int_{\mathbf{R}^{d}} \varphi_{\lambda} \, d(\nu_{N}-\widetilde{\mu}_{N})  &\geq\frac{\lambda}{N^{\frac{1}{d}}}  \left( 1-Mk_{d}\lambda^{d} \right)\\
        &= \frac{1}{N^{\frac{1}{d}}} \left( \frac{1}{2\left( 2Mk_{d} \right)^{\frac{1}{d}}} \right).
    \end{split}
\end{equation}

We will now show that
\begin{equation}
       \left| \int_{\mathbf{R}^{d}} \varphi_{\lambda} \, d(\nu_{N}-\mu_{N}) \right| \geq  \int_{\mathbf{R}^{d}} \varphi_{\lambda} \, d(\nu_{N}-\widetilde{\mu}_{N})
\end{equation}
for
\begin{equation}
    \lambda = \frac{1}{\left( 2Mk_{d} \right)^{\frac{1}{d}}}.
\end{equation}

In order to show this, note that
since $\varphi_{\lambda}$ is positive, and $\|d \mu_{N}\|_{L^{\infty}} \leq M,$ we have that
\begin{equation}
    \begin{split}
        \int_{\mathbf{R}^{d}} \varphi_{\lambda} d \mu_{N} &\leq M \int_{\mathbf{R}^{d}} \varphi_{\lambda} dx\\
        &\leq M \sum_{i=1}^{N} \int_{\mathbf{R}^{d}}  \mathbf{1}_{{B} \left(x_{i}^{N}, \frac{\lambda}{N^{\frac{1}{d}}} \right)} \varphi_{\lambda} dx\\
        &= \int_{\mathbf{R}^{d}} \varphi_{\lambda} d \widetilde{\mu}_{N}.
    \end{split}
\end{equation}

We can now conclude by taking $\lambda$ as in \eqref{speciallambda}:
\begin{equation}
    \begin{split}
        \| \nu_{N}-\mu_{N}\|_{BL} &\geq \left| \int_{\mathbf{R}^{d}} \varphi_{\lambda} \, d(\nu_{N}-\mu_{N}) \right|\\
        &\geq \int_{\mathbf{R}^{d}} \varphi_{\lambda} \, d(\nu_{N}-\widetilde{\mu}_{N}) \\
        &= \frac{1}{N^{\frac{1}{d}}} \left( \frac{1}{2\left( 2Mk_{d} \right)^{\frac{1}{d}}} \right).
    \end{split}
\end{equation}
\end{proof}

\subsection{On the $H^{-1}$ norm}

This paper will make extensive use of the $H^{-1}$ norm. We begin with an introduction about its basic properties, and relation to the Coulomb energy. 

\begin{definition}
The $H^{-1}$ norm is defined for a measure $\mu$ on $\mathbf{R}^{d}$ as 
\begin{equation}
    \parallel \mu \parallel_{H^{-1}}= \sup_{f \in C^{\infty}_{0}} \frac{\int f \, d\mu}{\parallel \nabla f \parallel_{L^{2}}}.
\end{equation}
\end{definition}

We now introduce a quantity which will be a key element when comparing the bounded Lipschitz norm of a measure to its electric energy. 

\begin{definition}
Given an open bounded set  $\Omega \subset \mathbf{R}^{d},$ we also define the $H^{-1}$ norm restricted to $\Omega,$ which we define, for any measure $\mu$ on $\Omega$ as
\begin{equation}
     \parallel \mu \parallel_{H^{-1}(\Omega)}= \sup_{f \in H^{1}(\Omega)} \frac{\int f \, d\mu}{\parallel f \parallel_{H^{1}}}.
\end{equation}
In the last equation
\begin{equation}
    \|f\|_{H^{1}}^{2}=\|f\|_{L^{2}}^{2}+\|\nabla f\|_{L^{2}}^{2}.
\end{equation}
\end{definition}

Our reasons for working with this norm are
\begin{itemize}
    \item[1.] Unlike the $H^{-1}$ norm, we can directly compare this quantity to the $BL$ norm. This comparison is actually a very easy consequence of duality and Holder's inequality.

    \item[2.] Proposition \ref{reducingH-1tocomactsets} is essential to the proof of the main result, it is not clear how to obtain a similar statement for the $H^{-1}$ norm. 
\end{itemize}

A useful inequality relates the $H^{-1}$ norm to the bounded Lipschitz norm, which we will use in the statement of the theorem. 
 
\begin{proposition}\label{fromBLtoH-1}
Let $\mu$ be a measure with compact support $K.$

Then 
\begin{equation}
     \parallel \mu \parallel_{BL} \leq \frac{1}{\sqrt{d+1} |K|^{\frac{d}{2}}} \parallel \mu \parallel_{H^{-1}(K)}.
\end{equation}
\end{proposition}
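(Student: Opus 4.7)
The plan is a straightforward duality argument that chains together the two suprema defining $\|\mu\|_{BL}$ and $\|\mu\|_{H^{-1}(K)}$, with a Hölder-type estimate in between to pass from the $W^{1,\infty}$ control on test functions to an $H^{1}(K)$ control.

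First, I would start from the dual characterization of the bounded Lipschitz norm: pick any $f \in W^{1,\infty}(\mathbf{R}^{d})$ with $\|f\|_{W^{1,\infty}} \leq 1$. Since $\mu$ is supported in $K$, I can restrict attention to $f|_{K}$ and write
\begin{equation}
\int_{\mathbf{R}^{d}} f \, d\mu \;=\; \int_{K} f|_{K} \, d\mu.
\end{equation}
By the very definition of $\|\mu\|_{H^{-1}(K)}$, this is bounded by $\|f|_{K}\|_{H^{1}(K)} \, \|\mu\|_{H^{-1}(K)}$, so the entire task reduces to controlling $\|f|_{K}\|_{H^{1}(K)}$ by a constant depending only on $d$ and $|K|$.

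Second, I would carry out that control by pointwise bounds plus integration over $K$. Since $\|f\|_{L^{\infty}} \leq 1$, we immediately get $\|f\|_{L^{2}(K)}^{2} \leq |K|$. For the gradient, using $|\nabla f|^{2} = \sum_{i=1}^{d} |\partial_{i} f|^{2} \leq d \|\nabla f\|_{L^{\infty}}^{2} \leq d$, I obtain $\|\nabla f\|_{L^{2}(K)}^{2} \leq d|K|$. Adding these gives
\begin{equation}
\|f|_{K}\|_{H^{1}(K)}^{2} \;\leq\; (d+1)|K|.
\end{equation}

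Finally, combining the two steps and taking the supremum over admissible $f$ produces an inequality of the form
\begin{equation}
\|\mu\|_{BL} \;\leq\; \sqrt{d+1}\,|K|^{1/2} \, \|\mu\|_{H^{-1}(K)},
\end{equation}
which matches the statement up to the precise power of $|K|$ appearing in the constant. I do not see any genuine obstacle here: the argument is pure duality plus Hölder, the compactness of the support is exactly what lets the $H^{-1}(K)$ norm enter (without it, one would have to work with the full-space $H^{-1}$ norm and lose the elementary volume bound), and no sophisticated extension or capacity estimate is needed. The only subtlety worth double-checking is the convention for $\|\nabla f\|_{L^{\infty}}$ (componentwise maximum versus Euclidean norm), which affects the factor of $d$ but nothing else.
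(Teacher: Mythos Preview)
Your proposal is correct and follows essentially the same route as the paper: Hölder to pass from $W^{1,\infty}$ control to $H^{1}(K)$ control on the test function, then duality. The paper phrases the duality step slightly differently (it starts from the $H^{-1}(K)$ side and invokes density of $W^{1,\infty}$ in $H^{1}$), but the content is identical; your observation about the exponent of $|K|$ is also apt---with $|K|$ read as Lebesgue measure the correct power is $1/2$, and the paper's $d/2$ is consistent only if $|K|$ is interpreted as a linear scale.
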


\begin{proof}
Using Holder's inequality, we obtain
\begin{equation}
    \parallel f \parallel_{L^{2}} \leq |K|^{\frac{d}{2}}  \parallel f \parallel_{L^{\infty}} \quad \text{and} \quad \parallel Df \parallel_{L^{2}} \leq \sqrt{d} |K|^{\frac{d}{2}}  \parallel Df \parallel_{L^{\infty}}.
\end{equation}
Then we have that
\begin{equation}
    \parallel f \parallel_{H^{1}} \leq \sqrt{d+1} |K|^{\frac{d}{2}}  \parallel f \parallel_{W^{1,\infty}},
\end{equation}
hence, using the fact that $W^{1, \infty}(\mathbf{R}^{d})$ is dense in $H^{1}(\mathbf{R}^{d})$ we get by duality that
\begin{equation}
     \begin{split}
         \parallel \mu \parallel_{H^{-1}(K)} &= \sup_{f \in H^{1}} \frac{\int_{\mathbf{R}^{d}} f d\, \mu}{\parallel f \parallel_{H^{1}}}\\
         &= \sup_{f \in W^{1,\infty}} \frac{\int_{\mathbf{R}^{d}} f d\, \mu}{\parallel f \parallel_{H^{1}}}\\
         &\geq \frac{1}{\sqrt{d+1} |K|^{\frac{d}{2}}} \sup_{f \in W^{1,\infty}} \frac{\int_{\mathbf{R}^{d}} f d\, \mu}{\parallel f \parallel_{W^{1, \infty}}} \\
         &=  \frac{1}{\sqrt{d+1} |K|^{\frac{d}{2}}}  \parallel \mu \parallel_{BL}. 
     \end{split}
\end{equation}

\end{proof}

A simple but useful property relates the electrostatic energy of a measure to its $H^{-1}$ norm:
\begin{proposition}\label{H-1iselectrostaticenergy}
Let $\mu$ be a signed measure on $\mathbf{R}^{2}$ of bounded variation and such that
\begin{equation}
    \int_{\mathbf{R}^{d}} d \mu =0,
\end{equation}
or an arbitrary signed measure on $\mathbf{R}^{d}$ for $d\geq 3$  of bounded variation. Let $g$ be the Coulomb kernel, then
\begin{equation}
    \parallel \mu \parallel_{H^{-1}}^{2}=\mathcal{E}(\mu).
\end{equation}
\end{proposition}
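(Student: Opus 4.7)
The plan is to route both quantities through the Coulomb potential
\[
h^{\mu} := g \ast \mu,
\]
which by construction satisfies $\Delta h^{\mu} = \mu$ in the distributional sense, and to show that both $\mathcal{E}(\mu)$ and $\|\mu\|_{H^{-1}}^{2}$ equal $\int_{\mathbf{R}^{d}} |\nabla h^{\mu}|^{2}\,dx$ (up to the relevant sign coming from Green's identity).

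First I would verify the decay estimates that make $\nabla h^{\mu}$ lie in $L^{2}(\mathbf{R}^{d})$. For $d \geq 3$, since $\mu$ has bounded variation, convolving against $g(x) \sim |x|^{-(d-2)}$ gives $h^{\mu}(x) = O(|x|^{-(d-2)})$ and $|\nabla h^{\mu}(x)| = O(|x|^{-(d-1)})$ at infinity; then $|\nabla h^{\mu}|^{2}$ decays like $|x|^{-2(d-1)}$, which is integrable. For $d = 2$ the logarithmic growth of $g$ is exactly cancelled by the neutrality hypothesis $\int d\mu = 0$: expanding $\log|x-y| = \log|x| + O(|y|/|x|)$ shows that $h^{\mu}(x) = O(|x|^{-1})$ and $|\nabla h^{\mu}(x)| = O(|x|^{-2})$, so again $|\nabla h^{\mu}|^{2} \in L^{1}(\mathbf{R}^{2})$. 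This is precisely where the assumption $\int d\mu = 0$ is used.

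Next, I would compute $\mathcal{E}(\mu)$ by Fubini,
\[
\mathcal{E}(\mu) = \iint g(x-y)\,d\mu(x)\,d\mu(y) = \int_{\mathbf{R}^{d}} h^{\mu}\,d\mu,
\]
(the diagonal has $\mu \otimes \mu$-measure zero since $\mu$ is absolutely continuous with respect to Lebesgue in the cases that matter, or more generally because $g \in L^{1}_{\mathrm{loc}}$ handles point masses). Then I would apply Green's identity to rewrite $\int h^{\mu}\,d\mu = \int h^{\mu}\,\Delta h^{\mu}\,dx = \pm \int |\nabla h^{\mu}|^{2}\,dx$. The integration by parts is carried out by introducing a smooth cutoff $\chi_{R}$ supported in $B(0,2R)$ with $\chi_{R} \equiv 1$ on $B(0,R)$ and $|\nabla \chi_{R}| \lesssim 1/R$, writing the identity for $\chi_{R} h^{\mu}$, and sending $R \to \infty$; the decay estimates above show that the boundary-type terms $\int \nabla \chi_{R} \cdot \nabla h^{\mu}\, h^{\mu}\,dx$ vanish in the limit.

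Finally, I would identify $\|\mu\|_{H^{-1}}^{2} = \int |\nabla h^{\mu}|^{2}\,dx$ by duality. For any $f \in C_{0}^{\infty}(\mathbf{R}^{d})$, integration by parts yields
\[
\int f\,d\mu = \int f\,\Delta h^{\mu}\,dx = -\int \nabla f \cdot \nabla h^{\mu}\,dx \leq \|\nabla f\|_{L^{2}}\,\|\nabla h^{\mu}\|_{L^{2}},
\]
so $\|\mu\|_{H^{-1}} \leq \|\nabla h^{\mu}\|_{L^{2}}$. For the reverse inequality, I would test against $f_{R} = \chi_{R} h^{\mu}$ mollified to lie in $C_{0}^{\infty}$; using the decay, $\int f_{R}\,d\mu \to \int |\nabla h^{\mu}|^{2}\,dx$ and $\|\nabla f_{R}\|_{L^{2}} \to \|\nabla h^{\mu}\|_{L^{2}}$, and the ratio tends to $\|\nabla h^{\mu}\|_{L^{2}}$. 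Combining the two equalities gives $\|\mu\|_{H^{-1}}^{2} = \mathcal{E}(\mu)$.

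The main technical obstacle is controlling the cutoff errors in the integration by parts when $\mu$ does not have compact support; in $d = 2$ the neutrality condition is indispensable, because without it $h^{\mu}$ grows logarithmically at infinity and neither $\int h^{\mu}\,d\mu$ nor $\int |\nabla h^{\mu}|^{2}\,dx$ would converge, and the whole identity would break down.
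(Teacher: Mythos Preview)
Your proposal is correct and follows essentially the same route as the paper: both reduce to showing that $\|\mu\|_{H^{-1}}^{2}$ and $\mathcal{E}(\mu)$ coincide with $\int_{\mathbf{R}^{d}}|\nabla h^{\mu}|^{2}\,dx$, using integration by parts plus Cauchy--Schwarz for the upper bound and testing against a cutoff $\chi_{R}h^{\mu}$ (the paper's $h^{\mu}\varphi_{R_{n}}$) for the lower bound, with the same decay estimates $|h^{\mu}|\lesssim |x|^{-(d-2)}$, $|\nabla h^{\mu}|\lesssim |x|^{-(d-1)}$ for $d\geq 3$ and $|h^{\mu}|\lesssim |x|^{-1}$, $|\nabla h^{\mu}|\lesssim |x|^{-2}$ for $d=2$ under neutrality. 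The only minor differences are organizational: the paper begins by reducing to $\mu\in C^{\infty}_{0}$ (which you should also do, since your two-dimensional decay argument via the expansion $\log|x-y|=\log|x|+O(|y|/|x|)$ tacitly assumes $|y|$ bounded), and the paper leaves the identity $\mathcal{E}(\mu)=\int|\nabla h^{\mu}|^{2}$ implicit while you spell it out as a separate Green's-identity step.
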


\begin{proof}

 We will continue the abuse of notation of not distinguishing between and measure and its density. Without loss of generality, we may assume that $\mu$ has a density which lies in $ C^{\infty}_{0}(\mathbf{R}^{d})$, where $C^{\infty}_{0}(\mathbf{R}^{d})$ denotes the space of smooth functions with compact support. Let $f \in C^{\infty}_{0}(\mathbf{R}^{d})$ and let $h^{\mu}=g\ast \mu.$ Using integration by parts and Cauchy-Schwartz inequality we have that
\begin{equation}
    \begin{split}
        \int_{\mathbf{R}^{d}} f \mu \, dx &= \int_{\mathbf{R}^{d}} f \Delta h^{\mu} \, dx\\
        &=  \int_{\mathbf{R}^{d}} (\nabla f) \cdot  ( \nabla^{-1}\mu) \, dx + \lim_{R \to \infty} \int_{\partial B(0,R)} \frac{\partial h^{\mu}}{\partial \nu} f d \mathcal{H}^{d-1}\\
        &\leq \sqrt{\int_{\mathbf{R}^{d}} \left|\nabla f \right|^{2}\, dx \int_{\mathbf{R}^{d}} \left| \nabla^{-1} \mu\right|^{2}\, dx}+ \lim_{R \to \infty} \int_{\partial B(0,R)} \frac{\partial h^{\mu}}{\partial \nu} f d \mathcal{H}^{d-1},
    \end{split}
\end{equation}
where $\nabla^{-1} \mu$ is defined as
\begin{equation}
    \nabla^{-1} \mu=\nabla(\mu \ast g),
\end{equation}
and $\mathcal{H}^{d-1}$ denotes the $d-1$ dimensional Hausdorff measure. 

Note that for $R$ big enough, 
\begin{equation}
    \int_{\partial B(0,R)} \frac{\partial h^{\mu}}{\partial \nu} f d \mathcal{H}^{d-1}=0,
\end{equation}
therefore
\begin{equation}
        \int_{\mathbf{R}^{d}} f \mu \, dx
        \leq \sqrt{\int_{\mathbf{R}^{d}} \left| \nabla f \right|^{2}\, dx \int_{\mathbf{R}^{d}} \left| \nabla^{-1} \mu\right|^{2}\, dx}
\end{equation}
and
\begin{equation}
    \parallel \mu \parallel_{H^{-1}}^{2} \leq \mathcal{E}(\mu).
\end{equation}

In order to prove that
\begin{equation}\label{reverseinequality}
    \parallel \mu \parallel_{H^{-1}}^{2} \geq \mathcal{E}(\mu),
\end{equation}
we claim that there exists a sequence $f_{n} \in C_{0}^{\infty}(\mathbf{R}^{d})$ such that 
\begin{equation}
    \| \nabla h^{\mu} - \nabla f_{n}\|_{L^{2}} \to 0.
\end{equation}
We first deal with the case $d \geq 3.$ We assume that 
\begin{equation}\label{finiteH-1norm}
      \parallel \mu \parallel_{H^{-1}} < \infty
\end{equation}
since otherwise inequality \eqref{reverseinequality} is trivial. Equation \eqref{finiteH-1norm} implies that 
\begin{equation}
    \int_{\mathbf{R}^{d}} \, d \mu < \infty.
\end{equation}
Without loss of generality we assume
\begin{equation}
    \int_{\mathbf{R}^{d}} \, d \mu \in \{0,1\}.
\end{equation}

Now consider, for $R > 0$ a function $\varphi_{R} \in C^{\infty}_{0}$ such that
\begin{equation}
    \begin{split}
        \varphi_{R} = 1 \text{ in } B(0,R)\\
        \text{supp}( \varphi_{R}) \subset B(0, 2R).
    \end{split}
\end{equation}
Note that we can chose $\varphi_{R}$ such that
\begin{equation}
    \begin{split}
       | \nabla \varphi_{R} | \leq C R^{-1}
    \end{split}
\end{equation}
for some universal $C \in \mathbf{R}^{+}.$

We now define 
\begin{equation}
    f_{n} = h^{\mu} \varphi_{R_{n}},
\end{equation}
for a sequence $R_{n} \to \infty.$

Note that 
\begin{equation}
    \nabla  f_{n}  = \nabla h^{\mu} \varphi_{R_{n}} +  h^{\mu} \nabla \varphi_{R_{n}}.
\end{equation}
Since we are assuming
\begin{equation}
    \int_{\mathbf{R}^{d}} \, d \mu \in \{0,1\},
\end{equation}
and $\mu$ has compact support, we have that for $R$ big enough,
\begin{equation}
    |\nabla h^{\mu}|\leq \frac{C}{R^{d-1}}
\end{equation}
and 
\begin{equation}
    | h^{\mu}|\leq \frac{C}{R^{d-2}},
\end{equation}
where $C$ depends only on $d.$

Therefore
\begin{equation}
    | \nabla  f_{n} (x)| \leq C R^{1-d}
\end{equation}
for $x \in  B(0, 2R) \setminus  B(0, R).$ Therefore
\begin{equation}
\begin{split}
    \int_{B(0, 2R) \setminus  B(0, R)}  | \nabla  f_{n}|^{2} \, dx &\leq \int_{B(0, 2R) \setminus  B(0, R)}  C R^{2(1-d)} \, dx \\
    & \leq  C R^{2-d}.
\end{split}
\end{equation}
On the other hand,
\begin{equation}
\begin{split}
    \int_{\mathbf{R}^{d} \setminus B(0,R)} | \nabla h^{\mu}|^{2} &\leq C \int_{\mathbf{R}^{d} \setminus B(0,R)} R^{2(1-d)} \\
    &\leq C R^{2-d}.
\end{split}    
\end{equation}
Therefore
\begin{equation}
\begin{split}
    \int_{\mathbf{R}^{d}} |\nabla f_{n} - \nabla h^{\mu}|^{2} \, dx &\leq C \left( \int_{\mathbf{R}^{d} \setminus B(0,R)} |\nabla h^{\mu}|^{2}  +  \int_{B(0, 2R) \setminus  B(0, R)}  | \nabla  f_{n} |^{2}\right) \\
    &\leq C R^{2-d}.
\end{split}    
\end{equation}

Therefore
\begin{equation}
    \begin{split}
        \lim_{n \to \infty} \int_{\mathbf{R}^{d}} \mu f_{n} \, dx &=  \lim_{n \to \infty} \int_{\mathbf{R}^{d}} \nabla^{-1}\mu \cdot \nabla f_{n} \\
        &= \int_{\mathbf{R}^{d}} \left| \nabla^{-1}\mu \right|^{2},
    \end{split}
\end{equation}
and 
\begin{equation}
    \lim_{n \to \infty} \| \nabla f_{n} \|_{L^{2}} = \| \nabla h^{\mu} \|_{L^{2}},
\end{equation}
which implies 
\begin{equation}
\begin{split}
    \parallel \mu \parallel_{H^{-1}} &\geq \sup_{f \in C^{\infty}_{0}} \frac{\int f \, d\mu}{\parallel \nabla f \parallel_{L^{2}}}\\
    &= \sqrt{\int_{\mathbf{R}^{d}} \left| \nabla^{-1}\mu \right|^{2}}.
\end{split}    
\end{equation}

The proof in the case $d=2$ and 
\begin{equation}
    \int_{\mathbf{R}^{d}} \, d \mu =0
\end{equation}
is almost the same. We only have to note that for $\mu$ with compact support, and for $R$ big enough (depending on $\text{supp} \mu$),
\begin{equation}
    |h^{\mu}| \leq C R^{-1}
\end{equation}
and
\begin{equation}
     |\nabla h^{\mu}| \leq C R^{-2},
\end{equation}
for a constant $C$ which depends on ${\rm supp}(\mu) := K$. In order to see this, let $m$ be the mass of the positive part of $\mu$, and let ${\rm supp}(\mu) := K$, with diameter of $K$ equal to $r>0$. W.lo.g we may assume $m=1$. Then, for $R$ big enough (depending on $r$), we have
\begin{equation}
    \begin{split}
        |h^{\mu}| &\leq \left |\log(R-r) - \log(R+r) \right|\\
        &\leq \frac{2r}{R}.
    \end{split}
\end{equation}

Similarly,
\begin{equation}
    \begin{split}
        |\nabla h^{\mu}| &\leq \left |\frac{1}{R-r} - \frac{1}{R+r} \right|\\
        &\leq \frac{2r}{R^{2}}.
    \end{split}
\end{equation}

\end{proof}

Note that, in general, $ \parallel \mu|_{\Omega} \parallel_{H^{-1}} \neq \parallel \mu|_{\Omega} \parallel_{H^{-1}(\Omega)}$ for a measure $\mu$ defined on $\mathbf{R}^{d},$ however, the two quantities are related, as we show in the next proposition. This proposition is not used in the proof, but we include it since it might be of independent interest.   

\begin{proposition}
Let $K$ be a compact set in $\mathbf{R}^{d}$ with $d \geq 2,$ and let $\mu$ be a measure of bounded variation defined on $K$. Then 
\begin{itemize}
    \item[a)] If $d=2$ and
    \begin{equation}
        \int_{K} d \, \mu =0,
    \end{equation}
     then there exists a constant $C$ such that
    \begin{equation}
        \| \mu \|_{H^{-1}(K)} \leq C \sqrt{\mathcal{E}(\mu)}. 
    \end{equation}
    
    \item[b)] If $d \geq 3$, then there exists a constant $c$ such that
    \begin{equation}
      \| \mu \|_{H^{-1}(K)}   \leq c \sqrt{\mathcal{E}(\mu)}. 
    \end{equation}
    Additionally, if 
    \begin{equation}
        \int_{\mathbf{R}^{d}} \mu \neq 0,
    \end{equation}
    then there exists a constant $c$ such that
    \begin{equation}
    \sqrt{\mathcal{E}(\mu)}  \leq \frac{1}{c}\| \mu \|_{H^{-1}(K)}. 
    \end{equation}
\end{itemize}
\end{proposition}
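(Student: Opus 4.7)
The plan is to deduce the proposition from Proposition \ref{H-1iselectrostaticenergy} (which identifies $\mathcal{E}(\mu)$ with $\|\mu\|_{H^{-1}}^{2}$) together with standard Sobolev extension/restriction arguments. Throughout I interpret $\|f\|_{H^{1}}$ in the definition of $\|\mu\|_{H^{-1}(\Omega)}$ as $\|f\|_{H^{1}(\Omega)}$.

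For both part (a) and the first inequality in part (b), I would establish $\|\mu\|_{H^{-1}(K)} \le C_{K} \|\mu\|_{H^{-1}}$ by extending test functions. Given any $f \in H^{1}(K)$, a standard Sobolev extension theorem (e.g.\ the Stein or Calderon extension used in \cite{evans1998partial}) produces $\hat f \in H^{1}(\mathbf{R}^{d})$ with compact support and $\|\hat f\|_{H^{1}(\mathbf{R}^{d})} \le C_{K} \|f\|_{H^{1}(K)}$. Since $\mathrm{supp}(\mu) \subset K$, we have $\int_{K} f\, d\mu = \int_{\mathbf{R}^{d}} \hat f\, d\mu$, and the duality definition of $\|\mu\|_{H^{-1}}$, combined with the trivial bound $\|\nabla \hat f\|_{L^{2}} \le \|\hat f\|_{H^{1}}$, gives $\int_{K} f\, d\mu \le C_{K}\, \|\mu\|_{H^{-1}}\, \|f\|_{H^{1}(K)}$. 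Taking the supremum over $f$ and applying Proposition \ref{H-1iselectrostaticenergy} yields the desired inequality in both dimensions; the mean-zero hypothesis in (a) is exactly what is required to apply Proposition \ref{H-1iselectrostaticenergy} when $d=2$.

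For the reverse inequality in part (b) I would test against $h^{\mu} := g \ast \mu$ restricted to $K$. By Proposition \ref{H-1iselectrostaticenergy} and integration by parts, $\int_{K} h^{\mu}\, d\mu = \mathcal{E}(\mu)$ and $\|\nabla h^{\mu}\|_{L^{2}(\mathbf{R}^{d})}^{2} = \mathcal{E}(\mu)$, so only the bound $\|h^{\mu}\|_{L^{2}(K)} \lesssim \sqrt{\mathcal{E}(\mu)}$ remains. This is where the restriction $d \ge 3$ enters: the homogeneous Sobolev embedding $\dot H^{1}(\mathbf{R}^{d}) \hookrightarrow L^{2^{*}}(\mathbf{R}^{d})$ with $2^{*} = 2d/(d-2)$ applies (the decay of $h^{\mu}$ at infinity, which under $\int\mu\neq 0$ is the slowest regime $h^{\mu} = O(|x|^{2-d})$, places $h^{\mu}$ in the relevant homogeneous space), and Holder's inequality on the compact set $K$ converts the $L^{2^{*}}$ bound into $\|h^{\mu}\|_{L^{2}(K)} \le |K|^{1/d} \|h^{\mu}\|_{L^{2^{*}}(\mathbf{R}^{d})} \le C_{d}|K|^{1/d}\sqrt{\mathcal{E}(\mu)}$. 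Together with $\|\nabla h^{\mu}\|_{L^{2}(K)} \le \sqrt{\mathcal{E}(\mu)}$ this gives $\|h^{\mu}|_{K}\|_{H^{1}(K)} \le C_{d,K}\sqrt{\mathcal{E}(\mu)}$, so plugging $h^{\mu}|_{K}$ into the definition of $\|\mu\|_{H^{-1}(K)}$ produces the reverse bound $\|\mu\|_{H^{-1}(K)} \ge c_{d,K}\sqrt{\mathcal{E}(\mu)}$.

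The main obstacle, and the reason part (a) is stated without a reverse inequality, is the failure of Sobolev embedding in dimension two: $\dot H^{1}(\mathbf{R}^{2})$ does not embed into $L^{p}(\mathbf{R}^{2})$ for any finite $p$, so the $L^{2}(K)$ control of $h^{\mu}$ cannot be obtained through this route. Apart from that, the only remaining work is bookkeeping --- merging the extension constant $C_{K}$, the dimensional Sobolev constant, and the Holder factor $|K|^{1/d}$ into a single constant depending only on $K$ and $d$.
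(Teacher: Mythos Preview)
Your treatment of part (a) and the first inequality of (b) is essentially the same as the paper's: both extend a test function $f\in H^{1}(K)$ to a compactly supported $\hat f\in H^{1}(\mathbf{R}^{d})$ with $\|\nabla\hat f\|_{L^{2}}\le C_{K}\|f\|_{H^{1}(K)}$, and then invoke Proposition \ref{H-1iselectrostaticenergy}.

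For the reverse inequality in (b) you take a genuinely different route. You bound $\|h^{\mu}\|_{L^{2}(K)}$ via the critical Sobolev embedding $\dot H^{1}(\mathbf{R}^{d})\hookrightarrow L^{2^{*}}(\mathbf{R}^{d})$ followed by H\"older on the bounded set $K$. The paper instead multiplies $h^{\mu}$ by a cutoff $\varphi_{R_{*}}$, bounds $\int_{B(0,2R_{*})\setminus B(0,R_{*})}|\nabla(h^{\mu}\varphi_{R_{*}})|^{2}$ using the far-field decay $|\nabla h^{\mu}|\lesssim R^{1-d}$, and then applies Poincar\'e on $B(0,2R_{*})$ to control $\|h^{\mu}\|_{L^{2}(K)}$; the radius $R_{*}$ is fixed from the positive quantity $m=\min\{\mathcal{E}(\nu):\nu\in\mathcal{M}(K),\ \int\nu=1\}$, which is where the hypothesis $\int\mu\neq 0$ enters. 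Your argument is cleaner and, in fact, does not actually use $\int\mu\neq 0$: in $d\ge 3$ any compactly supported $\mu$ with $\mathcal{E}(\mu)<\infty$ has $h^{\mu}$ vanishing at infinity and $\nabla h^{\mu}\in L^{2}$, which is all the Gagliardo--Nirenberg--Sobolev inequality requires. The paper's approach buys a proof free of the critical embedding, at the cost of the extra hypothesis and the bookkeeping around $R_{*}$.
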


\begin{proof}
We first prove part a) and the first inequality of part b).  Let $f\in H^{1}(K),$ and recall that there exists an extension operator, $\overline{f}$ i.e. there exists an operator $\overline{f} \in H^{1}(\mathbf{R}^{d})$ which satisfies 
\begin{equation}
\begin{split}
    \| \nabla \overline{f} \|_{L^{2}} &\leq C \|f\|_{H^{1}},\\
    \overline{f}|_{K} &= f,
\end{split}
\end{equation}
and $\overline{f}$ has compact support. 

Using Proposition \ref{H-1iselectrostaticenergy}, and the existence of the extension operator, we have that
\begin{equation}
    \begin{split}
        \sqrt{\mathcal{E}(\mu)} &= \sup_{f \in H^{1}_{0}(\mathbf{R}^{d})} \frac{\int f \, d\mu}{\parallel \nabla f \parallel_{L^{2}}}\\
        &\geq \sup_{f \in H^{1}(K)} \frac{\int {f} \, d\mu}{\parallel \nabla \overline{f} \parallel_{L^{2}}} \\
        &\geq c \sup_{f \in H^{1}(K)} \frac{\int {f} \, d\mu}{\parallel  {f} \parallel_{H^{1}}}\\
        &= c \| \mu \|_{H^{-1}(K)}.  
    \end{split}
\end{equation}

We now turn to the second inequality of part b), for which we assume that
\begin{equation}
        \int_{\mathbf{R}^{d}} \mu \neq 0.
    \end{equation}
We further assume that $\| \mu \|_{H^{-1}(K)} < \infty$ since otherwise the inequality is trivial. This implies 
\begin{equation}
    \left|\int_{K} d \mu \right| < \infty.
\end{equation}

Since both $\|\mu\|_{H^{-1}(K)}$ and $\sqrt{\mathcal{E}(\mu)}$ are homogeneous of degree $1,$ we may assume that 
\begin{equation}
    \int_{K} d\, \mu =1.
\end{equation}

Let
\begin{equation}
    h^{\mu} = \mu \ast g.
\end{equation}
Let $\varphi_{R}$ be as in the proof of Proposition \ref{H-1iselectrostaticenergy} and let 
\begin{equation}
    h^{\mu}_{R} = h^{\mu} \varphi_{R}.
\end{equation}
Proceeding as in Proposition \ref{H-1iselectrostaticenergy},
we have that for $R$ big enough, 
\begin{equation}
    |\nabla h^{\mu}_{R}(x)| \leq \frac{C}{R^{d-1}},
\end{equation}
for $x \in B(0, 2R) \setminus B(0,R),$ with $C$ independent of $R.$ Therefore
\begin{equation}\label{boundonnablahr}
     \int_{B(0,2R) \setminus B(0,R)} |\nabla h^{\mu}_{R}|^{2} \leq C R^{2-d}.
\end{equation}
Let $R_{*}$ be such that \eqref{boundonnablahr} holds and in addition,  
\begin{equation}\label{eqwithC}
\begin{split}
   & C R_{*}^{2-d} \leq m:= \min_{\mu \in \mathcal{M}(K)| \int \mu =1} \mathcal{E}(\mu),\\
    &K \subset B(0,R_{*}),
\end{split}    
\end{equation}
where the $C$ in equation \eqref{eqwithC} is the same as in equation \eqref{boundonnablahr}.

Then, using that $\mathcal{E}(\mu) = \int \left| \nabla h^{\mu}\right|^{2} $ for $d \geq 3$, we have that
\begin{equation}
\begin{split}
    \| \nabla h^{\mu}_{R_{*}} \|_{L^{2}}^{2} &= \int_{B(0,R_{*})} |\nabla h^{\mu}|^{2} + \int_{B(0,2R_{*}) \setminus B(0,R_{*})} |\nabla h^{\mu}_{R_{*}}|^{2} \\
    & \leq 2 \| \nabla h^{\mu} \|_{L^{2}}^{2}.
\end{split}    
\end{equation}

Now consider
\begin{equation}
    \begin{split}
        f_{*} &= h^{\mu}_{R_{*}}|_{K}\\
        &= h^{\mu}|_{K}.
    \end{split}
\end{equation}
By Poincare inequality, we get that
\begin{equation}
    \| f_{*} \|_{H^{1}} \leq C \| \nabla h^{\mu} \|_{L^{2}},
\end{equation}
with $C$ depending only on $R_{*},$ hence independent of $\mu.$

Then
\begin{equation}
    \begin{split}
        \| \mu \|_{H^{-1}(K)} &\geq \frac{\int f_{*} \, d\mu}{\parallel  f_{*} \parallel_{H^{1}}} \\
        & \geq c  \frac{\int h^{\mu} \, d\mu}{\parallel \nabla h^{\mu} \parallel_{L^{2}}} \\
        &= c \sqrt{\mathcal{E}(\mu)}.
    \end{split}
\end{equation}

\end{proof}

A notable challenge in this paper is that the $H^{-1}$ norm  is not local, as this simple example shows.
\begin{example}

Let $d \geq 3,$ then there exists $f$ such that $\|f\|_{H^{-1}} < \infty$ and a compact set $K \in \mathbf{R}^{d}$ such that 
    \begin{equation}
        \parallel f|_{K} \parallel_{H^{-1}} >  \parallel f \parallel_{H^{-1}}.
    \end{equation}
\end{example}

\begin{proof}

Let $\mu$ be a bump function, i.e. $\mu$ is smooth, positive, has integral $1$, and is supported in $B(0,1).$ Let  
\begin{equation}
    f_{\lambda}(x)=\mu(x)-\lambda \mu(x-10\mathbf{1}_{d}),
\end{equation}
where
\begin{equation}
    \mathbf{1}_{d}=(1,1,...1) \in \mathbf{R}^{d}.
\end{equation}
Let 
\begin{equation}
    \nu(x)=\mu(x-10\mathbf{1}_{d}),
\end{equation}
then
\begin{equation}
    \begin{split}
         \parallel f_{\lambda} \parallel_{H^{-1}}^{2} = &\iint g(x-y) d \mu_{x} d \mu_{y}-\\
         &\lambda\iint g(x-y) d \mu_{x} d\nu_{y}+\\
         &\lambda^{2} \iint g(x-y) d\nu_{x} d\nu_{y}.
    \end{split}
\end{equation}

On the other hand, for $K=\overline{B(0,5)}$ for example, 
\begin{equation}
         \parallel f_{\lambda}|_{K} \parallel_{H^{-1}}= \iint g(x-y) d \mu_{x} d \mu_{y}.
\end{equation}
Therefore for $\lambda$ small enough, we have that
    \begin{equation}
        \parallel f_{\lambda}|_{K} \parallel_{H^{-1}} >  \parallel f_{\lambda} \parallel_{H^{-1}}.
    \end{equation}

\end{proof}

\subsection{On the thermal equilibrium measure}

Before writing the proofs, we need a few properties of the thermal equilibrium measure $\mu_{\beta}.$ 
\begin{proposition}
The measure $\mu_{\beta}$ has support in the whole of $\mathbf{R}^{d}.$
\end{proposition}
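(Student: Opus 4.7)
The plan is to argue by contradiction. Suppose $\text{supp}(\mu_\beta) \subsetneq \mathbf{R}^d$; since supports are closed, there exists a non-empty open ball $B$ disjoint from $\text{supp}(\mu_\beta)$, so in particular $\mu_\beta(B) = 0$ and $\text{dist}(B, \text{supp}\,\mu_\beta) > 0$. The strategy is to build a competitor with strictly smaller free-energy $F(\mu) := \mathcal{I}_V(\mu) + \frac{1}{N\beta} \int \mu \log \mu$ by moving an infinitesimal amount of mass into $B$, exploiting the fact that the entropy cost of a tiny fresh blob of mass behaves like $t \log t$, whose one-sided derivative at $t=0^+$ is $-\infty$.

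Concretely, I would pick a smooth probability density $\rho$ compactly supported in $B$ with $\|\rho\|_{L^\infty} < \infty$, and form $\mu_t := (1-t)\mu_\beta + t\rho$ for $t \in [0,1]$. Because the supports of $\mu_\beta$ and $\rho$ are disjoint, the density of $\mu_t$ equals $(1-t)\mu_\beta$ on $\text{supp}(\mu_\beta)$ and $t\rho$ on $\text{supp}(\rho)$, so the entropy splits cleanly as
$$\int \mu_t \log \mu_t \,dx = (1-t)\log(1-t) + (1-t)\int \mu_\beta \log \mu_\beta \,dx + t \log t + t \int \rho \log \rho \,dx.$$
Differentiating in $t$ and sending $t \to 0^+$, the $t \log t$ contribution yields $\log t \to -\infty$, while every remaining summand stays bounded.

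The first-order variation of $\mathcal{I}_V(\mu_t)$ at $t = 0^+$ is finite: the cross-energy $\iint g(x-y)\,d\mu_\beta(x)\,d\rho(y)$ is bounded because $g$ is bounded on $\text{supp}(\mu_\beta) \times \text{supp}(\rho)$ (the two sets being at positive distance); the self-energy $\iint g\,d\rho\,d\rho$ is finite as $\rho$ is bounded with compact support; and $\int V\,d\rho$ is finite by continuity of $V$ and compactness of $\text{supp}\,\rho$. Hence the right derivative $\frac{d}{dt}\big|_{t = 0^+} F(\mu_t) = -\infty$, contradicting the minimality of $\mu_\beta$ and forcing $\text{supp}(\mu_\beta) = \mathbf{R}^d$.

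The main subtlety is ensuring that $\int \mu_\beta \log \mu_\beta$ and $\mathcal{I}_V(\mu_\beta)$ are individually finite so that the expansion above is meaningful. Finiteness of $F(\mu_\beta)$ is automatic from minimality (any reasonable test measure, e.g.\ a Gaussian, gives a finite $F$). Under the hypotheses on $V$ (nonnegativity and the appropriate growth at infinity), the mean-field energy $\mathcal{I}_V(\mu_\beta)$ is bounded below, so $\int \mu_\beta \log \mu_\beta = N\beta\bigl(F(\mu_\beta) - \mathcal{I}_V(\mu_\beta)\bigr)$ is finite. Once this is in place, the variational contradiction is the cleanest step.
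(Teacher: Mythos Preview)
The paper does not prove this statement; it simply cites \cite{armstrong2019thermal}. Your direct variational argument is the standard way to establish full support for entropy--regularized minimizers and is correct: the $t\log t$ cost of depositing fresh mass into a hole has one--sided derivative $-\infty$ at $t=0^+$, which dominates any finite first--order variation of $\mathcal{I}_V$. Compared to the paper's deferral, your approach is self--contained and makes explicit why the entropy term forces positivity of the density everywhere.

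Two small points worth tightening. First, your sentence ``$g$ is bounded on $\mathrm{supp}(\mu_\beta)\times\mathrm{supp}(\rho)$'' is literally false in $d=2$ if $\mathrm{supp}(\mu_\beta)$ is unbounded, but what you actually need is only the \emph{upper} bound $\langle\mu_\beta,\rho\rangle_g\le g(\delta)$ coming from $|x-y|\ge\delta$, and that holds in every dimension since $g$ is radially decreasing. Second, for the expansion to be meaningful you need each of $\mathcal{E}(\mu_\beta)$, $\int V\,d\mu_\beta$, and $\int\mu_\beta\log\mu_\beta$ to be individually finite, not just their sum; your final paragraph gestures at this but only yields an upper bound on the entropy. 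A clean way to close the loop is to use nonnegativity of relative entropy against the reference $Z^{-1}e^{-\frac{1}{2}N\beta V}$ (integrable by hypothesis~4 on $V$), which gives $\int\mu_\beta\log\mu_\beta\ge -\tfrac{1}{2}N\beta\int V\,d\mu_\beta-\log Z$; combined with $\mathcal{E}(\mu_\beta)\ge 0$ in $d\ge3$ (or the standard logarithmic lower bound in $d=2$) and $F(\mu_\beta)<\infty$, this forces all three pieces to be finite.
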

\begin{proof}
See \cite{armstrong2019thermal}
\end{proof}

\begin{proposition}\label{muthetaboundeduniformly}
The measure $\mu_{\beta}$ is uniformly bounded in $L^{\infty}$ for all $N \beta >2$ if $\mu_{V}$ is bounded in $L^{\infty}$.
\end{proposition}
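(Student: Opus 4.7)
The argument proceeds from the Euler-Lagrange equation for the variational problem \eqref{muthermal}: minimizing $\mathcal{I}_V(\mu) + \frac{1}{N\beta}\int \mu \log \mu$ over probability measures gives
\[
\mu_\beta(x) = \exp\bigl(-N\beta(h^{\mu_\beta}(x) + V(x) - \lambda_\beta)\bigr),
\]
where $h^{\mu_\beta} = g \ast \mu_\beta$ and $\lambda_\beta$ is the Lagrange multiplier enforcing $\int \mu_\beta = 1$. Setting $\zeta_\beta := h^{\mu_\beta} + V - \lambda_\beta$, so that $\mu_\beta = e^{-N\beta \zeta_\beta}$, one obtains the distributional identity $\Delta \zeta_\beta = \mu_\beta + \Delta V$ from $\Delta g = \delta_0$.

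Since $V$ grows at infinity while $h^{\mu_\beta}$ is controlled there (because $\mu_\beta$ is a probability measure), $\zeta_\beta$ attains its global minimum at an interior point $x^*$, and $\|\mu_\beta\|_{L^\infty} = e^{-N\beta \zeta_\beta(x^*)}$. Thus the claim reduces to a lower bound of the form $\zeta_\beta(x^*) \geq -C/(N\beta)$, with $C$ depending only on $\|\mu_V\|_{L^\infty}$ and $V$. The natural object to compare against is the effective potential of the classical equilibrium measure, $\zeta_V := h^{\mu_V} + V - c_V$, which is nonnegative everywhere, vanishes on $\Sigma = \operatorname{supp}(\mu_V)$, and satisfies $\mu_V = -\Delta V$ on the interior of $\Sigma$ (so that $-\Delta V \leq \|\mu_V\|_{L^\infty}$ there). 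My plan is to follow the obstacle-problem approach of \cite{armstrong2019thermal} to establish the pointwise comparison $\zeta_\beta \geq \zeta_V - C/(N\beta)$, from which $\zeta_\beta(x^*) \geq -C/(N\beta)$ and the uniform bound $\|\mu_\beta\|_{L^\infty} \leq e^C$ follow at once.

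The main obstacle is the comparison $\zeta_\beta \geq \zeta_V - C/(N\beta)$ itself: the difference $u := \zeta_\beta - \zeta_V$ satisfies $\Delta u = \mu_\beta - \mu_V$ with sign-indefinite right-hand side, so a direct maximum principle argument does not close. I would overcome this by exploiting the exponential formula $\mu_\beta = e^{-N\beta \zeta_\beta}$ as a self-consistent constraint. If $\zeta_\beta(x^*)$ were substantially below $\min \zeta_V = 0$, then $\mu_\beta$ would concentrate strongly near $x^*$; a modulus-of-continuity estimate for $\zeta_\beta$ (with Lipschitz constant controlled by $\|\mu_\beta\|_{L^\infty}$ via the convolution bound $|\nabla h^{\mu_\beta}(x)| \leq \int |\nabla g(x-y)| \, d\mu_\beta(y) \lesssim \|\mu_\beta\|_{L^\infty}^{1-1/d} + 1$ from a standard decomposition into near and far parts) combined with the mass constraint $\int \mu_\beta = 1$ would rule out such concentration. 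The threshold $N\beta > 2$ is precisely what is needed to close this self-consistent inequality with constants depending only on $\|\mu_V\|_{L^\infty}$ and $V$.
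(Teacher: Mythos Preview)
The paper itself does not prove this proposition; its proof consists solely of the reference ``See \cite{armstrong2019thermal}.'' Your proposal explicitly plans to follow the obstacle-problem approach of that same reference, so the two approaches coincide by design. Your outline---the Euler--Lagrange characterization $\mu_\beta = e^{-N\beta\zeta_\beta}$, comparison of $\zeta_\beta$ with the classical effective potential $\zeta_V$, and a self-consistent bootstrap to control $\min\zeta_\beta$---is a fair summary of the argument in \cite{armstrong2019thermal}, though the closing step is only sketched and would need to be written out carefully to constitute a complete proof.
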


\begin{proof}
See \cite{armstrong2019thermal}. Note that $N \beta$ in our notation corresponds to $\beta$ in the notation of \cite{armstrong2019thermal}.
\end{proof}

Next, we derive a splitting formula expanding around $\mu_{\beta}:$
\begin{proposition}\label{splittingformula}
 The Hamiltonian $\mathcal{H}_{N}$ can be split into (rewritten as):
 \begin{equation}
     \begin{split}
         \mathcal{H}_{N}\left( X_{N} \right)=N^{2}\mathcal{E}_{\beta}\left( \mu_{\beta} \right)&+N\sum_{i=1}^{N} \zeta_{\beta} \left( x_{i} \right)\\
         &+\frac{N^{2}}{2}\iint_{ \mathbf{R}^{d} \times \mathbf{R}^{d} \setminus \Delta} g(x-y) d\left( \text{emp}_{N}-\mu_{\beta} \right)(x)d\left( \text{emp}_{N}-\mu_{\beta} \right)(y),
     \end{split}
 \end{equation}
 where
 \begin{equation}
     \mathcal{E}_{\beta}\left( \mu \right)=  \mathcal{I}_{V}\left( \mu \right)+\frac{1}{N\beta}\int_{\mathbf{R}^{d}} \mu \log \left( \mu \right)
 \end{equation}
 and
 \begin{equation}
     \zeta_{\beta}=-\frac{1}{N\beta} \log\left( \mu_{\beta} \right). 
 \end{equation}
\end{proposition}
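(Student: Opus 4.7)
The plan is to expand the double integral on the right-hand side and match terms with the original Hamiltonian, closing the computation via the Euler--Lagrange equation satisfied by $\mu_{\beta}$. Writing $N \, \mathrm{emp}_{N} - N\mu_{\beta} = \sum_{i}\delta_{x_{i}} - N \mu_{\beta}$, the quadratic form expands as
\begin{equation*}
    \begin{split}
        \frac{N^{2}}{2}\iint_{\Delta^{c}} g(x-y)\, d(\mathrm{emp}_{N}-\mu_{\beta})^{\otimes 2}
        = &\; \frac{1}{2}\sum_{i\neq j} g(x_{i}-x_{j}) - N\sum_{i=1}^{N} h^{\mu_{\beta}}(x_{i}) \\
        &+ \frac{N^{2}}{2}\iint g(x-y)\, d\mu_{\beta}(x)\, d\mu_{\beta}(y),
    \end{split}
\end{equation*}
where $h^{\mu_{\beta}} := g \ast \mu_{\beta}$. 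Here I use that $\mu_{\beta}$ has an $L^{\infty}$ density (Proposition~\ref{muthetaboundeduniformly}), so removing the diagonal does not affect the integrals involving $\mu_{\beta}$.

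Next I would invoke the Euler--Lagrange equation for $\mu_{\beta}$. Since $\mu_{\beta}$ minimizes $\mathcal{E}_{\beta}$ over probability measures and has full support in $\mathbf{R}^{d}$, the first variation (with a Lagrange multiplier for the mass constraint) yields a constant $c_{\beta}$ with
\begin{equation*}
    h^{\mu_{\beta}}(x) + V(x) + \frac{1}{N\beta}\log \mu_{\beta}(x) = c_{\beta} \quad \text{a.e. } x \in \mathbf{R}^{d},
\end{equation*}
which by the definition of $\zeta_{\beta}$ becomes $V(x) + h^{\mu_{\beta}}(x) = \zeta_{\beta}(x) + c_{\beta}$. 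Evaluating at each $x_{i}$ and summing gives
\begin{equation*}
    N\sum_{i=1}^{N} V(x_{i}) + N\sum_{i=1}^{N} h^{\mu_{\beta}}(x_{i}) = N\sum_{i=1}^{N} \zeta_{\beta}(x_{i}) + N^{2} c_{\beta}.
\end{equation*}

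To identify $c_{\beta}$, I integrate the Euler--Lagrange equation against $\mu_{\beta}$ (which has total mass $1$), obtaining
\begin{equation*}
    c_{\beta} = \iint g(x-y)\, d\mu_{\beta}(x)\, d\mu_{\beta}(y) + \int V\, d\mu_{\beta} + \frac{1}{N\beta}\int \mu_{\beta}\log \mu_{\beta} = \mathcal{E}_{\beta}(\mu_{\beta}) + \frac{1}{2}\iint g\, d\mu_{\beta}^{\otimes 2}.
\end{equation*}
Combining the three displays (the expansion, the Euler--Lagrange substitution, and the identification of $c_{\beta}$) makes the self-energy term of $\mu_{\beta}$ cancel and leaves exactly $N^{2}\mathcal{E}_{\beta}(\mu_{\beta}) + N\sum_{i}\zeta_{\beta}(x_{i}) + \frac{N^{2}}{2}\iint_{\Delta^{c}} g\, d(\mathrm{emp}_{N}-\mu_{\beta})^{\otimes 2}$, which is the claimed identity.

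The only mildly delicate step is justifying the Euler--Lagrange equation pointwise a.e.\ on $\mathbf{R}^{d}$; because $\mu_{\beta}$ has full support and a bounded density (\cite{armstrong2019thermal}, Proposition~\ref{muthetaboundeduniformly}), there is no obstacle constraint and no free boundary, so the equation truly holds Lebesgue--a.e. Everything else is bookkeeping: splitting the integral, keeping track of the $\tfrac{1}{N}$ factors in $\mathrm{emp}_{N}$, and verifying that the diagonal contribution to the $\mathrm{emp}_{N}\otimes \mu_{\beta}$ cross term vanishes (again since $\mu_{\beta}$ has a density).
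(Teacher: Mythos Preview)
Your argument is correct and is exactly the standard derivation: expand the off-diagonal quadratic form, invoke the Euler--Lagrange identity $h^{\mu_{\beta}}+V+\tfrac{1}{N\beta}\log\mu_{\beta}=c_{\beta}$ (valid everywhere since $\mu_{\beta}>0$ on all of $\mathbf{R}^{d}$), and identify $c_{\beta}=\mathcal{E}_{\beta}(\mu_{\beta})+\tfrac{1}{2}\mathcal{E}(\mu_{\beta})$ by integrating against $\mu_{\beta}$. The paper itself does not give a proof but simply refers to \cite{armstrong2021local}; your write-up supplies precisely the computation found there (and the same Euler--Lagrange manipulation the paper uses later in the proof of Proposition~\ref{boundonpartitionfunction}).
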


\begin{proof}
See \cite{armstrong2021local}.
\end{proof}
   
In analogy with previous work in this field (\cite{armstrong2021local}, \cite{leble2018fluctuations}, \cite{bekerman2018clt}, \cite{leble2017large}), we define a next order partition function $K_{N, \beta},$ as 
\begin{equation}\label{definitionofnextorderpartitionfunction}
    K_{N, \beta}=Z_{N,\beta} \exp\left( N^{2}\beta \mathcal{E}_{\beta} \left( \mu_{\beta} \right) \right).
\end{equation}
Using \eqref{definitionofnextorderpartitionfunction}, we may rewrite the Gibbs measure as
\begin{equation}
  d \mathbf{P}_{N, \beta}(x_{1}...x_{N}) = \frac{1}{K_{N, \beta}} \exp\left( -\frac{1}{2} N^{2}\beta \mathcal{E}(\text{emp}_{N}-\mu_{\beta} ) \right) \Pi_{i=1}^{N} d \mu_{\beta}(x_{i}).
\end{equation}
We need an elementary bound on $K_{N, \beta},$ which can be easily deduced from \cite{armstrong2021local}.

\begin{proposition}\label{boundonpartitionfunction}
In dimension $d \geq 3,$ the next order partition function is greater than $1$, in other words,
\begin{equation}
    \log \left( K_{N, \beta} \right) \geq 0.
\end{equation}

In dimension $2,$ we have the bound
\begin{equation}
    \log(K_{N, \beta}) \geq +\frac{1}{4}  \beta N \log(N) - c_{V}\beta N,
\end{equation}
for $N \beta \geq 1$, where $c_{V}$ depends only on $V$. 
\end{proposition}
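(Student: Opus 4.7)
The plan is to combine the splitting formula from Proposition \ref{splittingformula} with Jensen's inequality. Substituting the splitting into $Z_{N,\beta}=\int e^{-\beta\mathcal{H}_N}\,dX_N$ and using that $\exp(-\beta N\zeta_\beta(x))=\mu_\beta(x)$ to absorb the linear term into the reference measure, we obtain
\begin{equation}
K_{N,\beta} \;=\; \int \exp\!\left(-\frac{N^2\beta}{2}\,\mathcal{E}(\mathrm{emp}_N-\mu_\beta)\right)\prod_{i=1}^N d\mu_\beta(x_i).
\end{equation}
Since $\mu_\beta^{\otimes N}$ is a probability measure and $t\mapsto e^{-t}$ is convex, Jensen's inequality yields
\begin{equation}
\log K_{N,\beta} \;\geq\; -\frac{N^2\beta}{2}\,\mathbb{E}_{\mu_\beta^{\otimes N}}\!\left[\mathcal{E}(\mathrm{emp}_N-\mu_\beta)\right].
\end{equation}

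To evaluate the expectation, I would expand $\mathcal{E}(\mathrm{emp}_N-\mu_\beta) = \frac{1}{N^2}\sum_{i\neq j} g(x_i-x_j) - \frac{2}{N}\sum_i h^{\mu_\beta}(x_i) + \mathcal{E}(\mu_\beta)$ and use the fact that the $x_i$ are i.i.d.\ with law $\mu_\beta$: the first term has expectation $\frac{N-1}{N}\mathcal{E}(\mu_\beta)$ and the second $2\mathcal{E}(\mu_\beta)$, giving $\mathbb{E}_{\mu_\beta^{\otimes N}}[\mathcal{E}(\mathrm{emp}_N-\mu_\beta)] = -\mathcal{E}(\mu_\beta)/N$. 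Hence $\log K_{N,\beta} \geq \frac{N\beta}{2}\mathcal{E}(\mu_\beta)$. For $d\geq 3$, Proposition \ref{H-1iselectrostaticenergy} identifies $\mathcal{E}(\mu_\beta)$ with $\|\mu_\beta\|_{H^{-1}}^2\geq 0$ (the Coulomb potential of $\mu_\beta$ is well defined thanks to Proposition \ref{muthetaboundeduniformly} together with the rapid decay of $\mu_\beta$), and the first claim $\log K_{N,\beta}\geq 0$ follows immediately.

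In dimension two the Jensen step only delivers $\log K_{N,\beta} \geq \frac{N\beta}{2}\mathcal{E}(\mu_\beta) \geq -c_V\beta N$, because $\mu_\beta$ has essentially compact support and a bounded $L^\infty$ density, so $|\mathcal{E}(\mu_\beta)|$ is merely $O(1)$ and has no definite sign. To produce the positive $\frac{1}{4}\beta N\log N$ contribution one must restrict the integral defining $K_{N,\beta}$ to a set of configurations that are well separated at the microscopic scale $N^{-1/2}$ and sit close to a near-minimizer of $\mathcal{H}_N$; on such configurations the two-dimensional Coulomb self-energy forces $\mathcal{E}(\mathrm{emp}_N-\mu_\beta)\leq -\frac{\log N}{2N}+O(1/N)$, and combining this with a rough (and easy) lower bound on the $\mu_\beta^{\otimes N}$-mass of the restriction set yields the stated inequality. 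This is precisely the content of the next-order partition function estimate in \cite{armstrong2021local}, and it is where the main obstacle lies: the logarithmic correction in $d=2$ is an intrinsically microscopic effect coming from the self-energy of point masses at the typical interparticle distance, and it is therefore invisible to the purely mean-field Jensen bound which only sees $\mathcal{E}(\mu_\beta)$.
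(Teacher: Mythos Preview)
Your argument is correct and arrives at exactly the same bound $\log K_{N,\beta}\geq \frac{N\beta}{2}\mathcal{E}(\mu_\beta)$ as the paper; the paper phrases the step via the Gibbs variational principle $-\log Z_{N,\beta}/\beta=\min_\mu\bigl(\int\mathcal{H}_N\,d\mu+\beta^{-1}\int\mu\log\mu\bigr)$ with trial state $\mu_\beta^{\otimes N}$, which is the dual formulation of the Jensen inequality you apply directly to the expression $K_{N,\beta}=\mathbb{E}_{\mu_\beta^{\otimes N}}\exp\bigl(-\tfrac{N^2\beta}{2}\mathcal{E}(\mathrm{emp}_N-\mu_\beta)\bigr)$. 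For $d=2$ the paper, like you, does not attempt to produce the $\tfrac14\beta N\log N$ term by hand and simply cites the literature (specifically \cite{serfaty2020gaussian} and \cite{leble2018fluctuations} rather than \cite{armstrong2021local}); your description of the mechanism behind that estimate is accurate.
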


\begin{proof}

We start by characterizing the thermal equilibrium measure. A standard computation (see for example, \cite{armstrong2019thermal}) shows that $\mu_{\beta}$ satisfies the equation
\begin{equation}
    h^{\mu_{\beta}} + V + \frac{1}{N \beta} \log \mu_{\beta} = c,
\end{equation}
for some constant $c \in \mathbf{R}$. Multiplying by $\mu_{\beta}$, integrating and using that $\mu_{\beta}$ is a probability measure, we get that
\begin{equation}
     h^{\mu_{\beta}} + V + \frac{1}{N \beta} \log \mu_{\beta} = \frac{1}{2} \mathcal{E}(\mu_{\beta}) + \mathcal{E}_{\beta}(\mu_{\beta}). 
\end{equation}

We now use the variational characterization of the partition function (see for example \cite{rougerie2016higher}):
\begin{equation}
    - \frac{\log Z_{N, \beta}}{\beta} = \min_{\mu \in \mathcal{P}(\mathbf{R}^{d \times N})} \int \mathcal{H}_{N}(X_{N}) \mu(X_{N}) d X_{N} + \frac{1}{\beta} \int \mu \log \mu,
\end{equation}
where $\mathcal{P}(\mathbf{R}^{d \times N})$ denotes the space of probability measures on $\mathbf{R}^{d \times N}$. 

Taking $\mu = \mu_{\beta}^{\otimes N}$ as a trial function, and using the splitting formula, we have that
\begin{equation}
    \begin{split}
        - \frac{\log Z_{N, \beta}}{\beta} &\leq N^{2}  \mathcal{E}_{\beta}(\mu_{\beta}) - \frac{N}{2}\mathcal{E}(\mu_{\beta}) \\
        & \leq N^{2}  \mathcal{E}_{\beta}(\mu_{\beta}),
    \end{split}
\end{equation}
which implies that
\begin{equation}
    \log \left( K_{N, \beta} \right) \geq 0.
\end{equation}
Note that this equation is true also in dimension $d \geq 2$, but we will need a stronger bound in dimension $2$ in order to conclude. 

In dimension $2$, the statement follows from Theorem $2$ of \cite{serfaty2020gaussian}, or Proposition 2.13 of \cite{leble2018fluctuations}.

\end{proof}

Next we derive an elementary concentration inequality, which will be the foundation of the theorem. We will use the notation 
\begin{equation}
    \mathcal{P}_{N}=\left\{ \mu \in \mathcal{P}\left( \mathbf{R}^{n} \right) \ \mu = \frac{1}{N}\sum_{i=1}^{N} \delta_{x_{i}} \right\},
\end{equation}
where $\mathcal{P}\left( \mathbf{R}^{n} \right)$ is the set of probability measures on $\mathbf{R}^{d}.$ In other words, $\mathcal{P}_{N}$ is the set of probability measures that consist of $N$ equally weighted point masses. 

\begin{lemma}\label{elementaryconcineq}
Let $A$ be an open set in the space of probability measures. Then
\begin{equation}
  \frac{1}{\beta N^{2}} \log\left(  \mathbf{P}_{N,\beta} \left( \text{emp}_{N} \in A \right)\right) \leq - \frac{\log K_{N, \beta}}{\beta N^{2} } - \inf_{\mu \in A \cap \mathcal{P}_{N}} G(\mu-\mu_{\beta}, \mu-\mu_{\beta}),
\end{equation}
where
\begin{equation}
    G(\mu, \nu)=\iint_{\mathbf{R}^{d} \times \mathbf{R}^{d} \setminus \Delta } g(x-y) d\mu_{x} d\nu_{y}.
\end{equation}
\end{lemma}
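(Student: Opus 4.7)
The plan is to use the splitting formula of Proposition \ref{splittingformula} to rewrite the Gibbs measure in a form that exposes $G(\text{emp}_N - \mu_\beta, \text{emp}_N - \mu_\beta)$ explicitly, and then bound this quantity below using the indicator constraint $\text{emp}_N \in A$.

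More concretely, I would first combine Proposition \ref{splittingformula} with the definition \eqref{definitionofnextorderpartitionfunction} of $K_{N,\beta}$. Using $\zeta_\beta = -\frac{1}{N\beta}\log\mu_\beta$, the term $\exp(-\beta N \sum_i \zeta_\beta(x_i))$ is exactly $\prod_i \mu_\beta(x_i)$, and the constant $N^2\mathcal{E}_\beta(\mu_\beta)$ cancels against the defining factor of $K_{N,\beta}$. This yields the identity already noted in the paper,
\begin{equation*}
    d\mathbf{P}_{N,\beta}(X_N) = \frac{1}{K_{N,\beta}}\exp\!\left(-\frac{\beta N^{2}}{2}\, G(\text{emp}_N-\mu_\beta,\text{emp}_N-\mu_\beta)\right)\prod_{i=1}^{N} d\mu_\beta(x_i).
\end{equation*}

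Next I would integrate this density over the event $\{X_N : \text{emp}_N \in A\}$. On this event $\text{emp}_N$ is by construction an element of $\mathcal{P}_N$, so $\text{emp}_N\in A\cap\mathcal{P}_N$, and therefore
\begin{equation*}
G(\text{emp}_N-\mu_\beta,\text{emp}_N-\mu_\beta)\ \geq\ \inf_{\mu\in A\cap\mathcal{P}_N} G(\mu-\mu_\beta,\mu-\mu_\beta).
\end{equation*}
Pulling the resulting constant out of the integral and using that $\mu_\beta$ is a probability measure to bound $\int\prod_i d\mu_\beta(x_i)\leq 1$, we obtain
\begin{equation*}
\mathbf{P}_{N,\beta}(\text{emp}_N\in A)\ \leq\ \frac{1}{K_{N,\beta}}\exp\!\left(-\frac{\beta N^2}{2}\inf_{\mu\in A\cap\mathcal{P}_N} G(\mu-\mu_\beta,\mu-\mu_\beta)\right).
\end{equation*}
Taking logarithms and dividing by $\beta N^2$ gives the claimed inequality (up to the factor $\tfrac12$, which appears to be a normalization convention in the statement).

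There is no real obstacle here; the content of the lemma is essentially the splitting formula combined with a trivial lower bound on $G$. The only things to be careful about are that $G$ involves an integral off the diagonal (so it is well-defined on atomic measures), and that the prefactor $\exp(N^2\beta\mathcal{E}_\beta(\mu_\beta))$ from the splitting cancels cleanly with the definition of $K_{N,\beta}$, which is what makes the constant in the bound depend only on $\log K_{N,\beta}$ and not on $\mathcal{E}_\beta(\mu_\beta)$.
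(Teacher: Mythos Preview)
Your proposal is correct and follows essentially the same argument as the paper: rewrite the Gibbs measure via the splitting formula so that $\exp(-\beta N\sum_i \zeta_\beta(x_i))=\prod_i \mu_\beta(x_i)$ and the constant is absorbed into $K_{N,\beta}$, then bound $G$ from below by its infimum over $A\cap\mathcal{P}_N$ and use that $\mu_\beta^{\otimes N}$ is a probability measure. Your observation about the missing factor $\tfrac12$ is also accurate; the paper's own proof carries the $\tfrac{N^2}{2}$ from the splitting formula but the constant is dropped in the final displayed inequality, so this is simply a normalization slip in the statement rather than a gap in your argument.
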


\begin{proof}
Using Proposition \ref{splittingformula}, we start by writing
\begin{equation}
\begin{split}
    &\mathbf{P}_{N,\beta} \left( \text{emp}_{N} \in A \right)=\\
    &\frac{1}{K_{N,\beta}} \int_{\text{emp}_{N} \in A} \exp\left( -\beta \left[  N \sum_{i=1}^{N} \zeta_{\beta} \left( x_{i} \right) +\frac{N^{2}}{2} G\left( \text{emp}_{N}-\mu_{\beta}, \text{emp}_{N}-\mu_{\beta} \right)\right] \right) d X_{N}.
\end{split}
\end{equation}

Using the definition of $\zeta_{\beta},$  we have
\begin{equation}
\begin{split}
    \mathbf{P}_{N,\beta} \left( \text{emp}_{N} \in A \right) &\leq \frac{1}{K_{N,\beta}} \int_{\text{emp}_{N} \in A} \exp \left(- G\left( \text{emp}_{N}-\mu_{\beta}, \text{emp}_{N}-\mu_{\beta} \right) \right) \Pi_{i=i}^{N}\, d \mu_{\beta}(x_{i})\\ 
    &\leq \frac{1}{K_{N,\beta}}   \exp\left(- N^{2}\beta \inf_{\mu \in A \cap \mathcal{P}_{N}} G \left(\mu-\mu_{\beta}, \mu-\mu_{\beta} \right) \right) \int_{\text{emp}_{N} \in A}  \Pi_{i=i}^{N}\, d \mu_{\beta}(x_{i}) \\
    &\leq \frac{1}{K_{N,\beta}} \exp\left(- N^{2}\beta \inf_{\mu \in A \cap \mathcal{P}_{N}} G \left(\mu-\mu_{\beta}, \mu-\mu_{\beta} \right) \right).
\end{split}
\end{equation}

The proposition follows by taking $\log$ on both sides. 
\end{proof}

We need one more technical proposition. It will be based on the following result:
\begin{lemma}\label{decayofmutheta}
There exists a constant $C$ and a compact set $K$ (both depending only on $V$ and $d$) such that, for every $N \beta >2$, in dimension $3$ and higher
\begin{equation}
    \mu_{\beta}(x) \leq C\exp\left(-CN\beta V(x) \right),
\end{equation}
for $x \notin K$ and in dimension $2,$
\begin{equation}
    \mu_{\beta}(x) \leq C\exp\left(-C N\beta [V(x)-\log(|x|)] \right),
\end{equation}
for $x \notin K$. 
\end{lemma}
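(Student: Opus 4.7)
The starting point is the Euler--Lagrange equation for the minimizer $\mu_{\beta}$ of the functional defined in \eqref{muthermal}. Performing a first variation (see \cite{armstrong2019thermal}) and using that the support of $\mu_{\beta}$ is all of $\mathbf{R}^{d}$, I would show that there is a Lagrange multiplier $c_{\beta}\in\mathbf{R}$ such that, for every $x\in\mathbf{R}^{d}$,
\begin{equation*}
h^{\mu_{\beta}}(x)+V(x)+\frac{1}{N\beta}\log\mu_{\beta}(x)=c_{\beta}.
\end{equation*}
Inverting this yields the representation
\begin{equation*}
\mu_{\beta}(x)=\exp\bigl(-N\beta(V(x)+h^{\mu_{\beta}}(x)-c_{\beta})\bigr),
\end{equation*}
so the lemma reduces to bounding $V+h^{\mu_{\beta}}-c_{\beta}$ from below outside a compact set, uniformly for $N\beta>2$.

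In dimension $d\geq 3$ the Coulomb kernel is nonnegative, hence $h^{\mu_{\beta}}\geq 0$ identically, and the only remaining task is to give a uniform upper bound $c_{\beta}\leq C^{*}$ with $C^{*}$ depending only on $V$ and $d$. I would derive this by invoking the convergence $c_{\beta}\to c_{V}$ as $N\beta\to\infty$ (where $c_{V}$ is the Frostman constant of $\mu_{V}$) proved in \cite{armstrong2019thermal}, together with continuity of $c_{\beta}$ in the compact range $N\beta\in(2,N_{0}]$, which handles the moderate regime. Setting $K=\{V\leq 2C^{*}\}$, compact by the growth hypothesis on $V$, we obtain $V(x)-c_{\beta}\geq V(x)/2$ for $x\notin K$, and hence $\mu_{\beta}(x)\leq\exp(-\tfrac{1}{2}N\beta V(x))$, which is of the required form.

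In dimension $d=2$ the extra ingredient is a lower bound on $h^{\mu_{\beta}}(x)$ for $|x|$ large. The plan is to split
\begin{equation*}
h^{\mu_{\beta}}(x)=-\overline{c}_{2}\int_{B(0,R)}\log|x-y|\,d\mu_{\beta}(y)-\overline{c}_{2}\int_{B(0,R)^{c}}\log|x-y|\,d\mu_{\beta}(y)
\end{equation*}
for a radius $R$ chosen large enough that $\mu_{\beta}(B(0,R)^{c})$ is small uniformly in $N\beta>2$; this can be extracted from the variational bound $\mathcal{E}_{\beta}(\mu_{\beta})\leq\mathcal{E}_{\beta}(\mu_{V})$ together with the uniform $L^{\infty}$-bound of Proposition \ref{muthetaboundeduniformly}. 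For $|x|\geq 2R$ and $|y|\leq R$, the elementary estimate $|x-y|\leq 2|x|$ gives $-\log|x-y|\geq -\log|x|-\log 2$, so the first integral contributes at least $-\overline{c}_{2}\log|x|-C'$; the tail integral is controlled using the $L^{\infty}$-bound on $\mu_{\beta}$ together with the smallness of its tail mass. Combining with the uniform upper bound on $c_{\beta}$ (again from \cite{armstrong2019thermal}) and the growth hypothesis $V(x)-\log|x|\to\infty$, we can absorb the constants by enlarging $K$ and obtain $\mu_{\beta}(x)\leq C\exp(-CN\beta[V(x)-\log|x|])$ for $x\notin K$.

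The main obstacle is the two-dimensional case, where one needs simultaneous uniform control of three quantities: the tail mass $\mu_{\beta}(B(0,R)^{c})$, the tail contribution to the logarithmic potential $h^{\mu_{\beta}}$, and the constant $c_{\beta}$. The difficulty is that $-\log|\cdot|$ is unbounded both above and below, so neither sign of the tail contribution is automatic; the uniform $L^{\infty}$-bound on $\mu_{\beta}$ and the growth of $V$ together resolve the issue, but this coupling is the crux of the $d=2$ argument, whereas in $d\geq 3$ the positivity of $g$ renders the proof essentially immediate.
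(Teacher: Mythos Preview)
The paper does not actually prove this lemma; its entire ``proof'' is the single line ``See \cite{armstrong2019thermal}.'' So there is no argument in the paper to compare against---the authors simply defer to the Armstrong--Serfaty reference.

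Your sketch is a plausible outline of how the result is obtained in that reference: the Euler--Lagrange identity for $\mu_{\beta}$, positivity of $h^{\mu_{\beta}}$ in $d\geq 3$, a uniform bound on the multiplier $c_{\beta}$, and in $d=2$ a lower bound on the logarithmic potential outside a large ball. In that sense you are supplying what the paper omits.

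One caution on the $d=2$ part: your control of the tail integral $\int_{B(0,R)^{c}}\log|x-y|\,d\mu_{\beta}(y)$ relies on knowing that the tail mass $\mu_{\beta}(B(0,R)^{c})$ is uniformly small in $N\beta$, which you propose to extract from the variational inequality $\mathcal{E}_{\beta}(\mu_{\beta})\leq\mathcal{E}_{\beta}(\mu_{V})$. This step hides real work---the entropy term in $\mathcal{E}_{\beta}$ does not directly penalize spreading of mass, and small tail mass plus $L^{\infty}$ control does not by itself bound $\int_{B(0,R)^{c}}\log|x-y|\,d\mu_{\beta}(y)$ since $\log|x-y|$ is unbounded on that region. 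In \cite{armstrong2019thermal} this is handled by a more careful bootstrap (or by comparing $h^{\mu_{\beta}}$ to $h^{\mu_{V}}$ and using barrier arguments), so your sketch identifies the right obstacle but would need that machinery to close.
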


\begin{proof}
See \cite{armstrong2019thermal}.

\end{proof}

\section{Proofs}

Unless otherwise stated, if $\mu \in \mathcal{P}(\mathbf{R}^{d})$ and $\epsilon >0,$ the notation $B(\mu, \epsilon)$ denotes 
\begin{equation}
    B(\mu, \epsilon) = \{ \nu \in \mathcal{P}(\mathbf{R}^{d}) | \| \mu - \nu\|_{BL} < \epsilon \}.
\end{equation}

In this section we prove the results stated in Section 2. The strategy is to use the elementary concentration inequality in proposition \ref{elementaryconcineq} as a foundation. The challenge is to estimate 
\begin{equation}
    \inf_{\mu \in A \cap \mathcal{P}_{N}} G \left(\mu-\mu_{\beta}, \mu-\mu_{\beta} \right)
\end{equation}
when 
\begin{equation}
    A=\left( B\left(\mu_{\beta}, \frac{k}{N^{\frac{1}{d}}} \right)\right)^{C}.
\end{equation}
The way to do this will be to pass from atomic measures to absolutely continuous probability measures (this will make an additive error of order $\frac{1}{N^{\frac{2}{d}}}$ in the energy if $d \geq 3$ or an error of size $\frac{C}{N}+\frac{\log N}{2N}$ if $d=2$, plus an error of order $\frac{1}{N^{\frac{1}{d}}}$ in the distance to $\mu_{\beta}$), then from absolutely continuous probability measures to absolutely continuous probability measures with compact support, and then use Proposition \ref{fromBLtoH-1} (this will make a multiplicative error of a constant).

Next, we show that we can reduce to absolutely continuous probability measures. The next proposition is proved in the appendix (it is restated as Proposition \ref{approximationbysmoothfunctions2}).

\begin{proposition}\label{approximationbysmoothfunctions}
Let $\lambda=\frac{1}{d},$ and let $A_{N}=\mathcal{P}(\mathbf{R}^{d})\setminus B\left( \mu_{\beta}, \frac{k_{1}}{N^{\lambda}} \right),$ where $\mathcal{P}(\mathbf{R}^{d})$ is the set of probability measures on $\mathbf{R}^{d},$ with $d\geq 3.$ Let $\mathcal{S}$ denote the set of absolutely continuous probability measures, then there exists a constant $C$ such that, if $d \geq 3$ then
\begin{equation}
    \inf_{\mu \in A_{N} \cap \mathcal{P}_{N} } G \left(\mu-\mu_{\beta}, \mu-\mu_{\beta} \right) \geq \inf_{\mu \in B_{N}\cap  \mathcal{S}} G \left(\mu-\mu_{\beta}, \mu-\mu_{\beta} \right) - \frac{C}{N^{\frac{2}{d}}},
\end{equation}
where 
\begin{equation}
    B_{N}=\mathcal{P}(\mathbf{R}^{d})\setminus B\left( \mu_{\beta}, \frac{(k_{1}-k_{2})_{+}}{N^{\lambda}} \right),
\end{equation}
for some absolute constant $k_{2},$ where
\begin{equation}
    (x)_{+}=
    \begin{cases}
    x \text{ if } x \geq 0\\
    0 \text{ o.w.}
    \end{cases}
\end{equation}
The constant $C$ depends only on $\|\mu_{V}\|_{L^{\infty}}.$

If $d=2$ then
\begin{equation}
    \inf_{\mu \in A_{N} \cap \mathcal{P}_{N} } G \left(\mu-\mu_{\beta}, \mu-\mu_{\beta} \right) \geq \inf_{\mu \in B_{N}\cap  \mathcal{S}} G \left(\mu-\mu_{\beta}, \mu-\mu_{\beta} \right) - \frac{C}{N}-\frac{1}{2 N} \log(N).
\end{equation}
The constant $C$ depends only on $\|\mu_{V}\|_{L^{\infty}}.$

\end{proposition}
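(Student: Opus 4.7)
The plan is to associate to each atomic measure $\mu = \frac{1}{N}\sum_{i=1}^{N} \delta_{x_{i}} \in A_{N} \cap \mathcal{P}_{N}$ the mollified measure $\tilde{\mu} = \rho_{\eta} \ast \mu$, where $\rho_{\eta}$ is a smooth, radial, nonnegative probability density supported in $B(0, \eta)$ and $\eta = k_{2}/N^{1/d}$ for a constant $k_{2}$ to be chosen. Since $\tilde{\mu}$ has an $L^{\infty}$ density, $\tilde{\mu} \in \mathcal{S}$, and I need to verify two things: first, that $\tilde{\mu} \in B_{N}$; second, that the energy $G(\tilde{\mu} - \mu_{\beta}, \tilde{\mu} - \mu_{\beta})$ does not exceed $G(\mu - \mu_{\beta}, \mu - \mu_{\beta})$ by more than the stated error.

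For the BL estimate, given $f \in W^{1, \infty}(\mathbf{R}^{d})$ with $\|f\|_{W^{1,\infty}} \leq 1$, we have $\int f \, d(\tilde{\mu} - \mu) = \int (f \ast \rho_{\eta} - f) \, d\mu$, and $\| f \ast \rho_{\eta} - f \|_{L^{\infty}} \leq \eta$ because $f$ is $1$-Lipschitz and $\rho_{\eta}$ is supported in $B(0, \eta)$. Hence $\| \tilde{\mu} - \mu \|_{BL} \leq \eta$, and the triangle inequality places $\tilde{\mu}$ outside $B(\mu_{\beta}, (k_{1} - k_{2})_{+}/N^{1/d})$, so $\tilde{\mu} \in B_{N}$. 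For the energy, I would decompose
\begin{equation*}
G(\tilde{\mu} - \mu_{\beta}, \tilde{\mu} - \mu_{\beta}) - G(\mu - \mu_{\beta}, \mu - \mu_{\beta}) = [G(\tilde{\mu}, \tilde{\mu}) - G(\mu, \mu)] - 2 [G(\tilde{\mu}, \mu_{\beta}) - G(\mu, \mu_{\beta})].
\end{equation*}

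The self-interaction term is controlled via Newton's shell theorem: radial symmetry of $\rho_{\eta}$ and harmonicity of $g$ off the origin give $g \ast \rho_{\eta} = g$ outside $B(0, \eta)$ and $g \ast \rho_{\eta} \leq g$ pointwise (the inside-$B(0,\eta)$ comparison follows from a maximum-principle argument applied to $g - g\ast\rho_\eta$), and iterating yields $g \ast \rho_{\eta} \ast \rho_{\eta} \leq g$ everywhere. Expanding the sums and using this inequality on the off-diagonal pairs,
\begin{equation*}
G(\tilde{\mu}, \tilde{\mu}) - G(\mu, \mu) = \frac{(g \ast \rho_{\eta} \ast \rho_{\eta})(0)}{N} + \frac{1}{N^{2}} \sum_{i \neq j} [(g \ast \rho_{\eta} \ast \rho_{\eta}) - g](x_{i} - x_{j}) \leq \frac{C g(\eta)}{N},
\end{equation*}
which is $O(N^{-2/d})$ in $d \geq 3$ and $(\log N)/(2N) + O(1/N)$ in $d = 2$ after inserting $\eta = k_{2}/N^{1/d}$. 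The cross term rewrites as
\begin{equation*}
G(\tilde{\mu}, \mu_{\beta}) - G(\mu, \mu_{\beta}) = \int ((g \ast \rho_{\eta}) - g) \ast \mu_{\beta} \, d\mu,
\end{equation*}
and a direct computation shows $\| g \ast \rho_{\eta} - g \|_{L^{1}(\mathbf{R}^{d})} \leq C \eta^{2}$ in every $d \geq 2$ (the difference is supported in $B(0, \eta)$ by Newton's theorem). Combined with Young's inequality and the uniform $L^{\infty}$ bound on $\mu_{\beta}$ from Proposition \ref{muthetaboundeduniformly}, this gives $| G(\tilde{\mu}, \mu_{\beta}) - G(\mu, \mu_{\beta}) | \leq C \eta^{2} = O(N^{-2/d})$.

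The main obstacle is the cross-term estimate: the naive bound $| G(\tilde{\mu}, \mu_{\beta}) - G(\mu, \mu_{\beta}) | \leq \| \tilde{\mu} - \mu \|_{BL} \cdot \| h^{\mu_{\beta}} \|_{W^{1, \infty}}$ only yields $O(\eta) = O(N^{-1/d})$, which is too weak by a factor of $N^{1/d}$. The improvement to $O(\eta^{2})$ comes from the cancellation built into radial mollification --- equivalently, that the first-order Taylor term of $h^{\mu_{\beta}} \ast \rho_{\eta} - h^{\mu_{\beta}}$ averages to zero by symmetry --- which is exactly what the bound $\| g \ast \rho_{\eta} - g \|_{L^{1}} \leq C \eta^{2}$ encodes in a form ready for Young's inequality.
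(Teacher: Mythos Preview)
Your proposal is correct and follows essentially the same route as the paper: mollify the empirical measure at scale $N^{-1/d}$, use superharmonicity/Newton's theorem to control the self-interaction (giving the $g(\eta)/N$ term, hence the $\log N/(2N)$ in $d=2$), and use the uniform $L^\infty$ bound on $\mu_\beta$ to control the cross term at order $\eta^2$. The only differences are cosmetic---the paper uses the uniform measure on a ball rather than a smooth mollifier and estimates the cross term by a direct superharmonicity remainder $\int_{B(x_i,\epsilon)} g(x_i-y)\,\mu_\beta(y)\,dy \le \|\mu_\beta\|_{L^\infty}\int_{B(0,\epsilon)} g$, whereas your packaging via $\|g\ast\rho_\eta - g\|_{L^1}\le C\eta^2$ and Young's inequality is slightly cleaner and handles $d=2$ and $d\ge 3$ in one stroke.
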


We will need the following proposition in order to reduce ourselves to probability measures with compact support. 
\begin{proposition}\label{localizingBLnorm}
Let $\nu_{N}$ be a sequence of probability measures such that 
\begin{equation}
    \parallel \nu_{N}-\mu_{\beta} \parallel_{BL} \geq \frac{\epsilon}{N^{\frac{1}{d}}},
\end{equation}
then there exists a compact set $K^{*}$ such that
\begin{equation}\label{reducingBLtocpmtactsets}
    \parallel \left( \nu_{N}-\mu_{\beta} \right) \mathbf{1}_{K^{*}} \parallel_{BL} \geq \frac{\epsilon}{4N^{\frac{1}{d}}}.
\end{equation}
Furthermore, \eqref{reducingBLtocpmtactsets} also holds for any compact set ${K}$ which contains $K^{*}.$ 
\end{proposition}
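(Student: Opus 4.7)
The plan is to exploit the duality definition of the bounded Lipschitz norm together with the tightness of the two probability measures involved. The key observation is that no smoothing of a test function is required: we are computing the $BL$ norm of the restricted signed measure $(\nu_N-\mu_\beta)\mathbf{1}_{K^*}$, so any admissible $W^{1,\infty}$ test function $f$ acts on this measure by integrating $f$ over $K^*$ only. Hence it suffices to show that the bulk of a near-optimal test functional for $\|\nu_N-\mu_\beta\|_{BL}$ is already captured inside some sufficiently large compact set.

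First, since $\|\nu_N-\mu_\beta\|_{BL}\geq \epsilon/N^{1/d}$, I may pick $f\in W^{1,\infty}(\mathbf{R}^d)$ with $\|f\|_{W^{1,\infty}}\leq 1$ and, after possibly replacing $f$ by $-f$,
\begin{equation}
    \int f\,d(\nu_N-\mu_\beta) \;\geq\; \frac{3\epsilon}{4 N^{1/d}}.
\end{equation}
Next, since $\nu_N$ and $\mu_\beta$ are individually tight as probability measures, there exists a compact set $K^*\subset\mathbf{R}^d$ (depending on $N$ and the pair $(\nu_N,\mu_\beta)$) with
\begin{equation}
    \nu_N\bigl((K^*)^c\bigr)+\mu_\beta\bigl((K^*)^c\bigr) \;<\; \frac{\epsilon}{2 N^{1/d}}.
\end{equation}
Using $\|f\|_{L^\infty}\leq 1$,
\begin{equation}
    \left|\int_{(K^*)^c} f\,d(\nu_N-\mu_\beta)\right| \;\leq\; \nu_N\bigl((K^*)^c\bigr)+\mu_\beta\bigl((K^*)^c\bigr) \;<\; \frac{\epsilon}{2 N^{1/d}},
\end{equation}
so that
\begin{equation}
    \int_{K^*} f\,d(\nu_N-\mu_\beta) \;\geq\; \frac{3\epsilon}{4 N^{1/d}}-\frac{\epsilon}{2 N^{1/d}} \;=\; \frac{\epsilon}{4 N^{1/d}}.
\end{equation}
The left hand side equals $\int f\,d\bigl((\nu_N-\mu_\beta)\mathbf{1}_{K^*}\bigr)$ and $\|f\|_{W^{1,\infty}}\leq 1$, so $f$ is admissible in the definition of $\|(\nu_N-\mu_\beta)\mathbf{1}_{K^*}\|_{BL}$, yielding the desired bound.

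For the monotonicity statement, if $K\supset K^*$ is any larger compact set then $\nu_N(K^c)+\mu_\beta(K^c)\leq \nu_N((K^*)^c)+\mu_\beta((K^*)^c)<\epsilon/(2N^{1/d})$, so the identical computation with $K$ in place of $K^*$ gives $\|(\nu_N-\mu_\beta)\mathbf{1}_K\|_{BL}\geq \epsilon/(4N^{1/d})$. The only subtlety worth flagging is that $K^*$ genuinely depends on $\nu_N$ and $N$, so the proposition is a pointwise statement along the sequence rather than a uniform one; this is sufficient for its use inside the infimum being estimated in the proof of the main theorem, where one treats a single (near-minimizing) $\nu_N$ at a time.
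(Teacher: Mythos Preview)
Your argument is clean but has a gap that matters downstream. You choose $K^*$ via the tightness of the \emph{specific} measure $\nu_N$, so $K^*$ depends on $\nu_N$. You flag this and assert it is harmless because one treats a single near-minimizing $\nu_N$ at a time. That is precisely where it breaks: in the proof of Theorem~\ref{rateofconvergencethermalequilibrium}, the compact set produced here is fed into Proposition~\ref{reducingH-1tocomactsets}, whose output constant $c$ depends on that compact. If $K^*$ varies with $\nu_N$, so does $c$, and since the competitor set $B_N^c\cap\mathcal{S}$ is not uniformly tight (nothing prevents $\nu_N$ from placing mass arbitrarily far out), this constant can degenerate to $0$ along a minimizing sequence and the infimum bound is lost. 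Read literally, the proposition also asks for a single $K^*$ valid along the whole sequence, which your construction does not provide either.

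The paper's proof avoids this by choosing $K^*$ solely from the tail decay of $\mu_\beta$ (Lemma~\ref{decayofmutheta} together with hypothesis~\eqref{lastpropertyofbeta}), independently of $\nu_N$. The idea you are missing is a conservation-of-mass trick: replace $\mu_\beta$ by its normalized restriction $\mu_\beta^K$, a probability measure supported in $K$, and split $\nu_N-\mu_\beta^K = (\nu_N\mathbf{1}_K - \mu_\beta^K) + \nu_N\mathbf{1}_{K^c}$. If the first piece had $BL$ norm smaller than $\tfrac{\epsilon}{3N^{1/d}}$, the second would exceed $\tfrac{\epsilon}{3N^{1/d}}$; but $\nu_N\mathbf{1}_{K^c}\geq 0$ gives $\|\nu_N\mathbf{1}_{K^c}\|_{BL} = \nu_N(K^c)$, hence $\nu_N(K) < 1 - \tfrac{\epsilon}{3N^{1/d}}$. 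Testing against the constant function $1$ then forces $\|\nu_N\mathbf{1}_K - \mu_\beta^K\|_{BL} \geq \int_K(\mu_\beta^K-\nu_N) = 1-\nu_N(K) \geq \tfrac{\epsilon}{3N^{1/d}}$, a contradiction. This argument never needs $\nu_N$ itself to be concentrated anywhere, so it yields a $K^*$ uniform in $\nu_N$, which is exactly what the application requires.
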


\begin{proof}

Let $K^{*}$ be a compact set as in lemma \ref{decayofmutheta} and such that property $2$ of $\beta$ (equation \eqref{lastpropertyofbeta}) holds. We define, for any compact set $K$ which contains $K^{*},$ the probability measure
\begin{equation}
    \mu_{\beta}^{K} = \frac{\mu_{\beta}|_{K}}{ \int_{K} \mu_{\beta} \, dx}.
\end{equation}

We claim that 
\begin{equation}
    \parallel \mu_{\beta}-\mu_{\beta}^{K} \parallel_{BL} \leq   2 \|\mu_{\beta}-\mu_{\beta}^{K^{*}}\|_{BL},
\end{equation}
for any $K$ that contains $K^{*}.$

To see this, note that
\begin{equation}
    \|\mu_{\beta}-\mu_{\beta}^{K} \|_{BL} \leq  \|\mu_{\beta}-\mu_{\beta}|_{K} \|_{BL}+ \|\mu_{\beta}|_{K}-\mu_{\beta}^{K} \|_{BL}.
\end{equation}
Then, we have that
\begin{equation}
    \|\mu_{\beta}-\mu_{\beta}|_{K} \|_{BL} = \int_{\mathbf{R}^{d} \setminus K} \mu_{\beta} dx,
\end{equation}
and therefore
\begin{equation}
    \|\mu_{\beta}-\mu_{\beta}|_{K} \|_{BL} \leq \|\mu_{\beta}-\mu_{\beta}|_{K^{*}} \|_{BL}.
\end{equation}
We also have that
\begin{equation}
    \begin{split}
        \|\mu_{\beta}^{K} - \mu_{\beta}|_{K}\|_{BL} &= \left(  \frac{1}{\int_{K} \mu_{\beta} dx} -1 \right) \|\mu_{\beta}|_{K} \|_{BL} \\
        &=\int_{\mathbf{R}^{d} \setminus K} \mu_{\beta} dx\\
        &\leq \|\mu_{\beta}-\mu_{\beta}|_{K^{*}} \|_{BL}.
    \end{split}
\end{equation}

Hence, we have that
\begin{equation}
        \parallel \mu_{\beta}-\mu_{\beta}^{K} \parallel_{BL} \leq  2 \parallel \mu_{\beta}-\mu_{\beta}|_{K^{*}} \parallel_{BL},
\end{equation}
Note that
\begin{equation}
    \parallel \mu_{\beta}-\mu_{\beta}|_{K^{*}} \parallel_{BL} \ll \frac{1}{N^{\frac{1}{d}}}
\end{equation}
by property $2$ of $\beta$ and Lemma \ref{decayofmutheta}. Hence, there exists an $N_{0}$ such that, for any compact set $K$ which contains $K^{*}$ and $N > N_{0}$ we have
\begin{equation}
    \begin{split}
        \parallel \nu_{N} - \mu_{\beta}^{K} \parallel_{BL} &\geq \parallel \nu_{N} - \mu_{\beta} \parallel_{BL}-\parallel \mu_{\beta}^{K} - \mu_{\beta} \parallel_{BL}\\
        &\geq \frac{2}{3} \frac{\epsilon}{N^{\frac{1}{d}}}.
    \end{split}
\end{equation}

We now claim that 
\begin{equation}
    \parallel \nu_{N}^{K} - \mu_{\beta}^{K} \parallel_{BL} \geq \frac{1}{3} \frac{\epsilon}{N^{\frac{1}{d}}},
\end{equation}
where
\begin{equation}
    \nu_{N}^{K}=\mathbf{1}_{K} \nu_{N}.
\end{equation}

To see this, note that
\begin{equation}
    \nu_{N} - \mu_{\beta}^{K} = \nu_{N}^{K} - \mu_{\beta}^{K} +\nu_{N}^{K^{C}},
\end{equation}
where
\begin{equation}
    \nu_{N}^{K^{C}}= \nu_{N} \mathbf{1}_{\mathbf{R}^{d} \setminus K}.
\end{equation}
Therefore by triangle inequality,
\begin{equation}
    \parallel \nu_{N} - \mu_{\beta}^{K} \parallel_{BL} \leq \parallel \nu_{N}^{K} - \mu_{\beta}^{K} \parallel_{BL} + \parallel \nu_{N}^{K^{C}} \parallel_{BL}.
\end{equation}
Therefore either 
\begin{equation}
     \parallel \nu_{N}^{K} - \mu_{\beta}^{K} \parallel_{BL} \geq \frac{1}{3} \frac{\epsilon}{N^{\frac{1}{d}}}
\end{equation}
or 
\begin{equation}
     \parallel \nu_{N}^{K^{C}} \parallel_{BL} \geq \frac{1}{3} \frac{\epsilon}{N^{\frac{1}{d}}}.
\end{equation}
We proceed by contradiction and assume that
\begin{equation}
     \parallel \nu_{N}^{K} - \mu_{\beta}^{K} \parallel_{BL} < \frac{1}{3} \frac{\epsilon}{N^{\frac{1}{d}}}.
\end{equation}
Then 
\begin{equation}
     \parallel \nu_{N}^{K^{C}} \parallel_{BL} > \frac{1}{3} \frac{\epsilon}{N^{\frac{1}{d}}}.
\end{equation}
Since $\nu_{N}^{K^{C}} $ is positive, we have
\begin{equation}
    \begin{split}
     \parallel \nu_{N}^{K^{C}} \parallel_{BL} &= \int_{\mathbf{R}^{d} \setminus K} \nu_{N} \, dx\\
     &> \frac{1}{3} \frac{\epsilon}{N^{\frac{1}{d}}}.   
    \end{split}
\end{equation}
Since 
\begin{equation}
    \int_{\mathbf{R}^{d}} \nu_{N} =1,
\end{equation}
we have that 
\begin{equation}
    \int_{K} \nu_{N} < 1-\frac{1}{3} \frac{\epsilon}{N^{\frac{1}{d}}}.
\end{equation}
But this means
\begin{equation}
    \begin{split}
        \parallel \nu_{N}^{K} - \mu_{\beta}^{K} \parallel_{BL} &\geq \int_{K}  \mu_{\beta}^{K}-\nu_{N} dx \\
        &\geq \frac{1}{3} \frac{\epsilon}{N^{\frac{1}{d}}}.
    \end{split}
\end{equation}
This is a contradiction and therefore
\begin{equation}
     \parallel \nu_{N}^{K} - \mu_{\beta}^{K} \parallel_{BL} \geq \frac{1}{3} \frac{\epsilon}{N^{\frac{1}{d}}}.
\end{equation}

Proceeding as before, and using property $3$ of $\beta$ and Lemma \ref{decayofmutheta}, there exists an $N_{1}$ such that, for any compact set $K$ which contains $K^{*}$ and $N> N_{1}$ we have
\begin{equation}
\parallel \mu_{\beta}^{K} - \mu_{\beta}\mathbf{1}_{K} \parallel_{BL} \leq \frac{1}{12} \frac{\epsilon}{N^{\frac{1}{d}}}.
\end{equation}

Therefore for $N \geq \max \{N_{0}, N_{1}\}$ we have 
\begin{equation}
    \begin{split}
        \parallel (\nu_{N} - \mu_{\beta})\mathbf{1}_{K} \parallel_{BL} & \geq \parallel \nu_{N}^{K} - \mu_{\beta}^{K} \parallel_{BL}-\parallel \mu_{\beta}^{K}  - \mu_{\beta}\mathbf{1}_{K} \parallel_{BL}\\
        &\geq \frac{1}{4} \frac{\epsilon}{N^{\frac{1}{d}}}.
    \end{split}
\end{equation}
\end{proof}

We need one more result, proposition \ref{reducingH-1tocomactsets}. After we prove it, the main theorem of this section will be a corollary. Proposition \ref{reducingH-1tocomactsets} is itself based on the following lemma, which is a refinement of the extension lemma for $H^{1}$ functions and will be proved in the appendix (it is restated as Lemma \ref{extensionlemma2}).

\begin{lemma}\label{extensionlemma}
Let $\Omega \subset \mathbf{R}^{d}$ be a bounded open set with a $C^{2}$ boundary, and let $f \in H^{1}({\Omega}).$ For every $\epsilon>0$ there exists $f_{\epsilon} \in H^{1}(\mathbf{R}^{d})$ such that 
\begin{itemize}
    \item[$\bullet$] The restriction satisfies $f_{\epsilon}|_{\Omega}=f.$
    \item[$\bullet$] The support satisfies $\mbox{supp}(f_{\epsilon}) \subset \overline{\Omega}_{\epsilon},$ where
    \begin{equation}
        \Omega_{\epsilon}=\{ x \in \mathbf{R}^{d}| d(x,\Omega) < \epsilon\}.
    \end{equation}
    \item[$\bullet$] We have control of the norms:
    \begin{equation}
        \begin{split}
            \parallel \nabla f_{\epsilon} \parallel_{L^{2}} &\leq \frac{C}{\sqrt{\epsilon}} \parallel f \parallel_{H^{1}}\\
            \parallel f_{\epsilon} \parallel_{L^{2}} &\leq (1+C\sqrt{\epsilon}) \parallel f \parallel_{L^{2}},
        \end{split}
    \end{equation}
    where $C$ is a constant that depends only on $\Omega$. 
\end{itemize}

In addition, if $\text{tr}(f) \geq 0,$ then $f_{\epsilon}$ is non negative in $\Omega_{\epsilon} \setminus \Omega.$
\end{lemma}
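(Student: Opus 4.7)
The plan is to prove the extension lemma by reducing to a local model via a partition of unity, applying a ``stretching reflection'' across $\partial\Omega$ in each chart, and reassembling. Since $\partial\Omega$ is compact and $C^{2}$, I first choose a finite open cover $\{U_{\alpha}\}$ of $\partial\Omega$ on which there exist $C^{2}$ diffeomorphisms $\Phi_{\alpha}$ straightening $\partial\Omega$ to a piece of the hyperplane $\{y_{d}=0\}$, together with a subordinate partition of unity $\{\psi_{0},\psi_{\alpha}\}$ where $\psi_{0}$ is supported strictly inside $\Omega$. Writing $f=\psi_{0}f+\sum_{\alpha}\psi_{\alpha}f$, I would extend $\psi_{0}f$ by zero and treat each $\psi_{\alpha}f$ in local coordinates.

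In the model case where $\Omega$ is locally $\{y_{d}>0\}$, define for $h\in H^{1}(\{y_{d}>0\})$ and $y_{d}\in(-\epsilon,0)$
\[
E_{\epsilon}h(y',y_{d})=\chi(y_{d}/\epsilon)\,h(y',-y_{d}/\epsilon),
\]
where $\chi\in C^{\infty}(\mathbf{R})$ satisfies $\chi(0)=1$ and $\chi(s)=0$ for $s\le -1$. This stretched reflection fills the exterior strip of width $\epsilon$ with values of $h$ drawn from a strip of width $1$ inside $\Omega$. A direct change of variable $s=-y_{d}/\epsilon$ gives $\|E_{\epsilon}h\|_{L^{2}(\text{strip})}^{2}\le \epsilon\|h\|_{L^{2}(\Omega)}^{2}$, which after reassembly yields $\|f_{\epsilon}\|_{L^{2}}^{2}\le(1+C\epsilon)\|f\|_{L^{2}}^{2}$. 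For the gradient, the chain rule multiplies the normal derivative of $E_{\epsilon}h$ by $1/\epsilon$, while the same change of variable supplies a volume factor $\epsilon$; combining the two gives $\|\nabla E_{\epsilon}h\|_{L^{2}(\text{strip})}^{2}\le C\epsilon^{-1}\|h\|_{H^{1}}^{2}$. Continuity at $\{y_{d}=0\}$ is automatic, since $E_{\epsilon}h(y',0^{-})=h(y',0^{+})$, so the glued function lies in $H^{1}$, and the factor $\chi(y_{d}/\epsilon)$ forces support into $\{y_{d}>-\epsilon\}$.

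Pulling back via $\Phi_{\alpha}^{-1}$, whose $C^{2}$ norms are uniformly bounded on the compact boundary, changes $L^{2}$ and gradient norms only by multiplicative constants depending on $\Omega$. Summing the local extensions and adding $\psi_{0}f$ (extended by zero) produces $f_{\epsilon}\in H^{1}(\mathbf{R}^{d})$ with the claimed bounds and with support in a $C\epsilon$-neighborhood of $\Omega$; an initial rescaling of $\epsilon$ by this fixed constant absorbs $C$. For the non-negativity clause, when $\mathrm{tr}(f)\ge 0$ I would apply $E_{\epsilon}$ to the positive part $f^{+}$ rather than to $f$ in each exterior strip: since $\mathrm{tr}(f^{+})=(\mathrm{tr}(f))^{+}=\mathrm{tr}(f)$, the glued function is still in $H^{1}$, and $f^{+}\ge 0$ with $\chi\ge 0$ force $E_{\epsilon}f^{+}\ge 0$ outside $\Omega$. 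The standard inequalities $\|f^{+}\|_{L^{2}}\le\|f\|_{L^{2}}$ and $\|\nabla f^{+}\|_{L^{2}}\le\|\nabla f\|_{L^{2}}$ preserve the norm bounds.

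The main technical obstacle is producing the sharp $\epsilon^{-1/2}$ scaling together with an $L^{2}$ bound controlled by $\|f\|_{L^{2}}$ rather than $\|f\|_{H^{1}}$. A naive even reflection followed by a cutoff of width $\epsilon$ cannot bound $\|f\|_{L^{2}(\text{interior strip of width }\epsilon)}$ by $\|f\|_{L^{2}(\Omega)}$ alone (one only gets a bound involving $\|f\|_{H^{1}}$ via the trace theorem), so the $L^{2}$ conclusion fails. The stretching trick circumvents this by sampling exterior values from a fixed-width interior slab, whose $L^{2}$ mass is automatically dominated by $\|f\|_{L^{2}}$, while paying a single factor $\epsilon^{-1/2}$ on the gradient as the balance between the $1/\epsilon$ chain-rule factor and the $\epsilon$ volume factor from the change of variable.
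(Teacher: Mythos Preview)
Your proposal is correct and follows essentially the same approach as the paper's proof: a stretched reflection across $\partial\Omega$ in local coordinates (reflect, rescale the normal variable by a factor $1/\epsilon$, multiply by a cutoff), reassembled via boundary charts and a partition of unity, yielding exactly the $\sqrt{\epsilon}$ and $1/\sqrt{\epsilon}$ scalings. The only cosmetic difference is in the nonnegativity clause---the paper takes $\max(\varphi_{\epsilon},0)$ of the already-built extension in the exterior strip, whereas you extend $f^{+}$ instead of $f$; both variants work for the same reason, namely that $\mathrm{tr}(f)\ge 0$ forces the boundary traces to match.
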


With the the help of the last lemma, we can prove a proposition, which will be needed in the proof of the concentration inequality. 

\begin{proposition}\label{reducingH-1tocomactsets}
Let $\nu$ be a measure such that $\|\nu\|_{H^{-1}} < \infty$ and assume that there exists a compact set $K$ such that $\nu$ is nonpositive or nonnegative outside of $K,$ and the boundary of $K$ has $C^{2}$ regularity. Then there exists a compact set $K_{1}$ which contains $K,$ and a constant $c$ such that 
\begin{equation}
    \parallel \nu \parallel_{H^{-1}} \geq c \parallel \nu|_{K_{1}} \parallel_{H^{-1}(K_{1})}.
\end{equation}
Furthermore, $c$ and $K_{1}$ depend only on $K.$
\end{proposition}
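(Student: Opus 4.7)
The strategy is to apply Lemma \ref{extensionlemma} to lift any test function $f\in H^{1}(K_{1})$ to a compactly supported function in $H^{1}(\mathbf{R}^{d})$, and then bound the action of $\nu$ on $f$ using $\|\nu\|_{H^{-1}}$. Without loss of generality, assume $\nu\leq 0$ on $\mathbf{R}^{d}\setminus K$, and take $K_{1}=K$, whose boundary is $C^{2}$ by hypothesis (any $C^{2}$ thickening of $K$ would work equally well if needed).

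Given $f\in H^{1}(K_{1})$, I would decompose $f=f^{+}-f^{-}$ into its positive and negative parts. Both $f^{+},f^{-}\in H^{1}(K_{1})$, have nonnegative trace on $\partial K_{1}$, and satisfy $\|f^{\pm}\|_{H^{1}}\leq\|f\|_{H^{1}}$. Applying Lemma \ref{extensionlemma} to each of $f^{\pm}$ with a fixed $\epsilon>0$ depending only on $K_{1}$ yields extensions $(f^{\pm})_{\epsilon}\in H^{1}(\mathbf{R}^{d})$ which are nonnegative everywhere (nonneg on $K_{1}$ because $f^{\pm}\geq 0$ there, and nonneg on $(K_{1})_{\epsilon}\setminus K_{1}$ by the final property of the lemma), have support in $(K_{1})_{\epsilon}$, and satisfy $\|\nabla (f^{\pm})_{\epsilon}\|_{L^{2}}\leq C\|f\|_{H^{1}(K_{1})}$.

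The key identity
\begin{equation*}
\int_{K_{1}} f^{\pm}\,d\nu = \int_{\mathbf{R}^{d}} (f^{\pm})_{\epsilon}\,d\nu - \int_{\mathbf{R}^{d}\setminus K_{1}} (f^{\pm})_{\epsilon}\,d\nu
\end{equation*}
then yields two pieces of information. First, the full integral on $\mathbf{R}^{d}$ is controlled by duality: $|\int_{\mathbf{R}^{d}} (f^{\pm})_{\epsilon}\,d\nu|\leq \|\nabla (f^{\pm})_{\epsilon}\|_{L^{2}}\|\nu\|_{H^{-1}}\leq C\|f\|_{H^{1}(K_{1})}\|\nu\|_{H^{-1}}$. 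Second, because $(f^{\pm})_{\epsilon}\geq 0$ and $\nu\leq 0$ on $\mathbf{R}^{d}\setminus K_{1}$, the correction term $\int_{\mathbf{R}^{d}\setminus K_{1}} (f^{\pm})_{\epsilon}\,d\nu$ is nonpositive. Together these give one-sided bounds on $\int_{K_{1}} f^{\pm}\,d\nu$. Rerunning the same argument for $-f$ (equivalently, invoking the dual version of the extension lemma for traces of the opposite sign) supplies the matching one-sided bounds; combining them yields a genuine two-sided estimate $|\int_{K_{1}} f\,d\nu|\leq C\|f\|_{H^{1}(K_{1})}\|\nu\|_{H^{-1}}$. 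Taking the supremum over $f$ gives $\|\nu|_{K_{1}}\|_{H^{-1}(K_{1})}\leq C\|\nu\|_{H^{-1}}$, which rearranges to the stated inequality with $c=1/C$; by construction $C$ and $K_{1}$ depend only on $K$.

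The main obstacle is precisely the step of converting the sign information into a symmetric two-sided bound. The sign-preserving extension of Lemma \ref{extensionlemma} naturally produces inequalities in one direction only, so the subtle point is orchestrating the decomposition $f=f^{+}-f^{-}$ together with dual sign-preserving extensions of $\pm f^{\pm}$ so that the controlled signs of the boundary corrections on $\mathbf{R}^{d}\setminus K_{1}$ combine constructively rather than cancelling out. This is where the hypothesis that $\nu$ has fixed sign outside $K$ is used in an essential way, since otherwise no sign structure would be available for the correction term.
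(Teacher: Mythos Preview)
Your argument has a genuine gap at exactly the point you flag as ``subtle.'' With $\nu\le 0$ outside $K_{1}$ and $(f^{\pm})_{\epsilon}\ge 0$ outside, the correction $\int_{\mathbf{R}^{d}\setminus K_{1}}(f^{\pm})_{\epsilon}\,d\nu$ is $\le 0$, so you obtain
\[
\int_{K_{1}} f^{\pm}\,d\nu \;\ge\; \int_{\mathbf{R}^{d}}(f^{\pm})_{\epsilon}\,d\nu \;\ge\; -C\|f\|_{H^{1}}\|\nu\|_{H^{-1}},
\]
that is, a \emph{lower} bound for each piece. To bound $\int_{K_{1}} f\,d\nu=\int_{K_{1}} f^{+}\,d\nu-\int_{K_{1}} f^{-}\,d\nu$ from above you would need an \emph{upper} bound on $\int_{K_{1}} f^{+}\,d\nu$, which your construction does not provide. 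Rerunning the argument for $-f$ does not help: $(-f)^{+}=f^{-}$ and $(-f)^{-}=f^{+}$, so you recover the same two lower bounds. Likewise, invoking the ``dual'' sign-preserving extension (nonpositive extension of $-f^{\pm}$) is simply the negative of what you already built and yields the same inequality. In short, the sign of the correction term is fixed by the sign of $\nu$ outside, and no splitting of $f$ changes that.

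The paper sidesteps this by not attempting a two-sided estimate valid for all $f$. Instead it works on the thickening $K_{\epsilon}$, picks a single near-optimal test function $\varphi\in H^{1}(K_{\epsilon})$, and modifies it to have nonnegative trace via $\phi=\max\{\varphi,\widetilde{\varphi}\}$, where $\widetilde{\varphi}$ is an extension of $\varphi|_{K}$ with zero trace on $\partial K_{\epsilon}$. The point is that $\phi=\varphi$ on $K$ while $\phi\ge\varphi$ on $K_{\epsilon}\setminus K$; since $\nu$ has a definite sign there, $\int_{K_{\epsilon}}\nu\phi\ge\int_{K_{\epsilon}}\nu\varphi$, so near-optimality is preserved. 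Only then is $\phi$ extended (nonnegatively) to all of $\mathbf{R}^{d}$, and a single one-sided inequality in the correct direction finishes the proof. The essential idea you are missing is this $\max$ trick, which converts an arbitrary near-optimizer into one with nonnegative trace without losing control of the integral.
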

The proof is found in the appendix. This proposition is restated as Proposition \ref{localizingH-1norm}.

With the last propositions, we can prove Theorem \ref{rateofconvergencethermalequilibrium}:
\begin{proof}
(Of Theorem \ref{rateofconvergencethermalequilibrium}). Let $k>0,$ we have that 
\begin{equation}
\begin{split}
&\mathbf{P}_{N, \beta} \left( \parallel  \text{emp}_{N}-\mu_{\beta} \parallel_{BL} \leq \frac{k}{N^{\frac{1}{d}}} \right)=\\
&1-\mathbf{P}_{N, \beta} \left( \parallel  \text{emp}_{N}-\mu_{\beta} \parallel_{BL} > \frac{k}{N^{\frac{1}{d}}} \right) \geq \\
& 1- \frac{1}{K_{N,\beta}} \exp \left( -\frac{N^{2}\beta}{2} \inf_{\mu \in \left( B \left( \mu_{\beta}, \frac{k}{N^{\frac{1}{d}}} \right) \right)^{C} \cap \mathcal{P}_{N}} G\left( \mu-\mu_{\beta}, \mu-\mu_{\beta} \right) \right).
\end{split}
\end{equation}
Using Propositions \ref{H-1iselectrostaticenergy} and \ref{approximationbysmoothfunctions}, we have that in dimension $3$ or higher,
\begin{equation}
\begin{split}
   &\inf_{\mu \in \left( B \left( \mu_{\beta}, \frac{k}{N^{\frac{1}{d}}} \right) \right)^{C} \cap \mathcal{P}_{N}} G\left( \mu-\mu_{\beta}, \mu-\mu_{\beta} \right) \geq \\
   &\inf_{\mu \in \left( B \left( \mu_{\beta}, \frac{(k-c_{1})_{+}}{N^{\frac{1}{d}}} \right) \right)^{C} \cap \mathcal{S}} G\left( \mu-\mu_{\beta}, \mu-\mu_{\beta} \right) - \frac{C}{N^\frac{2}{d}} = \\
   &  \inf_{\mu \in \left( B \left( \mu_{\beta}, \frac{(k-c_{1})_{+}}{N^{\frac{1}{d}}} \right) \right)^{C} \cap \mathcal{S}} \parallel \mu-\mu_{\beta} \parallel_{H^{-1}}^{2} - \frac{C}{N^\frac{2}{d}}.
\end{split}
\end{equation}

In dimension 2, using Propositions \ref{boundonpartitionfunction} and \ref{approximationbysmoothfunctions} we have that
\begin{equation}
    \begin{split}
       & \   -\log(K_{N,\beta})-\frac{N^{2}\beta}{2} \inf_{\mu \in \left( B \left( \mu_{\beta}, \frac{k}{N^{\frac{1}{d}}} \right) \right)^{C} \cap \mathcal{P}_{N}} G\left( \mu-\mu_{\beta}, \mu-\mu_{\beta} \right) \\
        &\leq -\log(K_{N,\beta}) -\frac{N^{2}\beta}{2} \left( \inf_{\mu \in \left( B \left( \mu_{\beta}, \frac{(k-c_{1})_{+}}{N^{\frac{1}{d}}} \right) \right)^{C} \cap \mathcal{S}} G \left(\mu-\mu_{\beta}, \mu-\mu_{\beta} \right) - \frac{C}{N}-\frac{1}{2 N} \log(N) \right) \\
        &= \frac{N^{2} \beta}{2} \inf_{\mu \in \left( B \left( \mu_{\beta}, \frac{(k-c_{1})_{+}}{N^{\frac{1}{d}}} \right) \right)^{C} \cap \mathcal{S}} \parallel \mu-\mu_{\beta} \parallel_{H^{-1}}^{2} - \frac{C}{N}.
    \end{split}
\end{equation}

Now we use Propositions \ref{localizingBLnorm} and \ref{reducingH-1tocomactsets} to get a lower bound on the expression on the last line. Let $\nu_{N}$ be a sequence of absolutely continuous probability measures such that 
\begin{equation}
    \parallel \nu_{N}- \mu_{\beta} \parallel_{BL} \geq \frac{(k-c_{1})_{+}}{N^{\frac{1}{d}}}.
\end{equation}
Then we claim that there exists $c_{2}>0$ such that
\begin{equation}
    \parallel \nu_{N}-\mu_{\beta} \parallel_{H^{-1}} \geq c_{2}  \frac{(k-c_{1})_{+}}{N^{\frac{1}{d}}}.
\end{equation}

In order to show this, note that using properties $3$ and $4$ of $V$ and $\beta$ (equations \eqref{lastpropertyofbeta} and \eqref{lastpropertyofbeta2}) there exists a compact set $K_{1}$ such that 
\begin{equation}
    \mu_{\beta}^{K_{1}}:=\frac{1}{\int_{K_{1}} \mu_{\beta} \, dx}\mu_{\beta}|_{K_{1}}
\end{equation}

satisfies that 
\begin{equation}\label{smallnessofleftover}
    \begin{split}
        \parallel \mu_{\beta}^{K_{1}}- \mu_{\beta} \parallel_{BL} &\ll \frac{1}{N^{\frac{1}{d}}}\\
        \mathcal{E}\left( \mu_{\beta}-\mu_{\beta}^{K_{1}} \right) &\ll \frac{1}{N^{\frac{1}{d}}}.
    \end{split}
\end{equation}

Furthermore, equation \eqref{smallnessofleftover} also holds for any compact set that contains $K_{1}.$ By equation \eqref{reducingBLtocpmtactsets}, we also know that there exists a compact set $K_{2}$ such that 
\begin{equation}\label{referenceline814}
     \parallel (\mu_{\beta}^{K_{2}}- \nu_{N}) \mathbf{1}_{K_{2}} \parallel_{BL} \geq \frac{1}{4} \parallel \mu_{\beta}- \nu_{N} \parallel_{BL},
\end{equation}
furthermore, \eqref{referenceline814} also holds for any compact set that contains $K_{2}$. Also by hypothesis
\begin{equation}
     \parallel \mu_{\beta}- \nu_{N} \parallel_{BL} \geq  \frac{(k-c_{1})_{+}}{N^{\frac{1}{d}}}.
\end{equation}

Let $K_{3}=K_{1} \bigcup K_{2},$ then $\nu_{N}-\mu_{\beta}^{K_{3}}$ is non negative outside of $K_{3},$ and therefore by Proposition \ref{reducingH-1tocomactsets} there exists a compact set $K$ which contains $K_{3}$ and a constant $c_{4}$ such that 
\begin{equation}
    \parallel (\mu_{\beta}^{K}- \nu_{N})|_{K} \parallel_{H^{-1}(K)} \leq c_{4} \parallel \mu_{\beta}^{K}- \nu_{N} \parallel_{H^{-1}}.
\end{equation}

Putting everything together, we get that
\begin{equation}
    \begin{split}
        \parallel \mu_{\beta}^{K}- \nu_{N} \parallel_{H^{-1}} &\geq c  \parallel (\mu_{\beta}^{K}- \nu_{N})|_{K} \parallel_{H^{-1}(K)} \\
        &\geq c  \parallel (\mu_{\beta}^{K}- \nu_{N}) \mathbf{1}_{K} \parallel_{BL} \\
        &\geq c  \parallel \mu_{\beta}- \nu_{N} \parallel_{BL} \\
        &\geq  c_{2}^{*}\frac{(k-c_{1})_{+}}{N^{\frac{1}{d}}}.
    \end{split}
\end{equation}
Lastly, we have that if 
\begin{equation}
    \| \nu_{N} - \mu_{\beta} \|_{BL} \geq \frac{k}{N^{\frac{1}{d}}},
\end{equation}
then for some $c_{2}^{*} \in \mathbf{R}^{+} $ and $N$ big enough
\begin{equation}
\begin{split}
    \mathcal{E}(\nu_{N}-\mu_{\beta}) & \geq    c_{6}\mathcal{E}(\nu_{N}-\mu_{\beta}^{K})-c_{7}\mathcal{E}(\mu_{\beta}-\mu_{\beta}^{K})\\
    &=c_{6}\|\nu_{N}-\mu_{\beta}^{K}\|_{H^{-1}}-c_{7}\mathcal{E}(\mu_{\beta}-\mu_{\beta}^{K})\\
    &\geq c_{2}^{*}\frac{(k-c_{5})_{+}}{N^{\frac{1}{d}}} - c_{7}\mathcal{E}(\mu_{\beta}-\mu_{\beta}^{K}).
\end{split}
\end{equation}
Therefore for $N$ big enough, we have that 
\begin{equation}
    \begin{split}
        \mathbf{P}_{N, \beta} \left( \parallel  \text{emp}_{N}-\mu_{\beta} \parallel_{BL} \leq\frac{k}{N^{\frac{1}{d}}} \right) &\geq 1- \exp\left(-\frac{1}{2} N^{2-\frac{2}{d}}\beta\left( c_{1}(k-c_{2})_{+}^{2}-c_{3} \right) \right)\\
        &\to 1,
    \end{split}
\end{equation}
where convergence happens for all $k> \sqrt{\frac{c_{3}}{c_{1}}}+c_{2}.$

\end{proof}

As a consequence of our methods, we obtain the following theorems relating the bounded Lipschitz norm to the $H^{-1}$ norm (electrostatic energy). 
\begin{remark}\label{theofromBLtoH-1(1)}
Let $\mu$ be a measure of bounded variation. Assume further that if $\mu$ is defined on $\mathbf{R}^{2}$ then $\mu$ has mean $0$, and assume that there exists a compact set $K$ such that $\mu$ has a definite sign outside of $K,$ and $K$ has $C^{2}$ regularity. Then there exists a constant $k,$ and a compact set $K_{2}$ which depend only on $K$ such that 
\begin{equation}
    \parallel \mu|_{K_{2}} \parallel_{BL}^{2} \leq k    \mathcal{E}(\mu).
\end{equation}
\end{remark}

\begin{proof}
By Proposition \ref{reducingH-1tocomactsets}, we have that for some $k,$
\begin{equation}
    \|\mu \|_{H^{-1}} \geq k \|\mu|_{K_{1}} \|_{H^{-1}(K_{1})}.
\end{equation}
Together with Proposition \ref{fromBLtoH-1}, this implies 
\begin{equation}
    \begin{split}
         \|\mu \|_{H^{-1}} &\geq k \|\mu|_{K_{1}} \|_{H^{-1}(K_{1})}\\
         &\geq k \|\mu|_{K_{1}} \|_{BL}.
    \end{split}
\end{equation}
Using Proposition \ref{H-1iselectrostaticenergy} we can conclude.
\end{proof}

\begin{remark}\label{theofromBLtoH-1(2)}
Let $\mu$ be a measure of bounded variation. Assume further that if $\mu$ is defined on $\mathbf{R}^{2}$ then $\mu$ has mean $0$ and assume that there exist compacts sets $K_{1}, K_{2}$ such that $\mu|_{K_{2} \setminus K_{1}}$ has a density which is in $L^{2},$ and $K_{1}$ has $C^{2}$ regularity. Then there exists a constant $k,$ which depends on $K_{1}, K_{2},$ and $ \parallel \mu|_{K_{2} \setminus K_{1}} \parallel_{L^{2}}$ such that 
\begin{equation}
    \parallel \mu|_{K_{1}} \parallel_{BL}^{2} \leq k    \mathcal{E}(\mu).
\end{equation}
\end{remark}

\begin{proof}
By Proposition \ref{fromBLtoH-1(2)}, we have that
\begin{equation}
    \|\mu \|_{H^{-1}} \geq k \|\mu|_{K_{2}} \|_{H^{-1}(K_{2})}.
\end{equation}
Together with Proposition \ref{fromBLtoH-1}, this implies 
\begin{equation}
    \begin{split}
         \|\mu \|_{H^{-1}} &\geq k \|\mu|_{K_{2}} \|_{H^{-1}(K_{2})}\\
         &\geq k \|\mu|_{K_{2}} \|_{BL}.
    \end{split}
\end{equation}
Using proposition \ref{H-1iselectrostaticenergy} we can conclude.
\end{proof}

\begin{remark}
Clearly there is no positive constant $k$ such that
\begin{equation}
    \parallel \mu \parallel_{BL} \leq k    \parallel \mu \parallel_{H^{-1}}.
\end{equation}
\end{remark}

\section{Appendix 1}

This appendix is devoted to proving the following proposition:
\begin{proposition}\label{approximationbysmoothfunctions2}
Let $\lambda=\frac{1}{d},$ and let $A_{N}=\mathcal{P}(\mathbf{R}^{d})\setminus B\left( \mu_{\beta}, \frac{k_{1}}{N^{\lambda}} \right),$ where $\mathcal{P}(\mathbf{R}^{d})$ is the set of probability measures on $\mathbf{R}^{d}.$ Let $\mathcal{S}$ denote the set of absolutely continuous probability measures, then there exists a constant $C$ (which depends only on $\| \mu_{V} \|_{L^{\infty}}$) such that, if $d \geq 3$ then
\begin{equation}
    \inf_{\mu \in A_{N} \cap \mathcal{P}_{N} } G \left(\mu-\mu_{\beta}, \mu-\mu_{\beta} \right) \geq \inf_{\mu \in B_{N}\cap  \mathcal{S}} G \left(\mu-\mu_{\beta}, \mu-\mu_{\beta} \right) - \frac{C}{N^{\frac{2}{d}}},
\end{equation}
where 
\begin{equation}
    B_{N}=\mathcal{P}(\mathbf{R}^{d})\setminus B\left( \mu_{\beta}, \frac{(k_{1}-k_{2})_{+}}{N^{\lambda}} \right),
\end{equation}
for some absolute constant $k_{2},$ where
\begin{equation}
    (x)_{+}=
    \begin{cases}
    x \text{ if } x \geq 0\\
    0 \text{ o.w.}
    \end{cases}
\end{equation}

If $d=2$ then
\begin{equation}
    \inf_{\mu \in A_{N} \cap \mathcal{P}_{N} } G \left(\mu-\mu_{\beta}, \mu-\mu_{\beta} \right) \geq \inf_{\mu \in B_{N}\cap  \mathcal{S}} G \left(\mu-\mu_{\beta}, \mu-\mu_{\beta} \right) - \frac{C}{N}-\frac{1}{2 N} \log(N).
\end{equation}
\end{proposition}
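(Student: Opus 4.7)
The plan is to construct, for each atomic $\mu = \tfrac{1}{N}\sum_{i=1}^N \delta_{x_i} \in A_N \cap \mathcal{P}_N$, an absolutely continuous probability measure $\tilde\mu \in B_N$ such that
\begin{equation*}
    G(\tilde\mu - \mu_\beta, \tilde\mu - \mu_\beta) \leq G(\mu - \mu_\beta, \mu - \mu_\beta) + \tfrac{C}{N^{2/d}}
\end{equation*}
in dimension $d\geq 3$ (with an extra $\tfrac{1}{2N}\log N$ term in dimension $2$); taking the infimum over $\mu$ then yields the proposition. The natural candidate for $\tilde\mu$ is Onsager-type smearing: let $\sigma_{x}^{(r)}$ denote the normalized uniform probability measure on $B(x, r)$, choose $r_N := c\, N^{-1/d}$ for a constant $c$ to be fixed, and set
\begin{equation*}
    \tilde\mu := \tfrac{1}{N}\sum_{i=1}^N \sigma_{x_i}^{(r_i)}, \qquad r_i := \min\!\Bigl(r_N,\ \tfrac{1}{4}\min_{j\neq i}|x_i - x_j|\Bigr),
\end{equation*}
so that the balls $\overline{B(x_i, r_i)}$ are pairwise disjoint and $\tilde\mu$ has a bounded density.

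The bounded Lipschitz control is immediate: any $f$ with $\|f\|_{W^{1,\infty}}\leq 1$ is $1$-Lipschitz, so $|\int f\,d(\sigma_{x_i}^{(r_i)} - \delta_{x_i})| \leq r_i \leq r_N$, whence $\|\mu - \tilde\mu\|_{BL} \leq r_N = c\,N^{-1/d}$. With $k_2 := c$, this places $\tilde\mu$ in $B_N$. For the energy, decompose
\begin{equation*}
    G(\tilde\mu - \mu_\beta, \tilde\mu - \mu_\beta) - G(\mu - \mu_\beta, \mu - \mu_\beta) = \bigl[G(\tilde\mu, \tilde\mu) - G(\mu, \mu)\bigr] - 2\,G(\tilde\mu - \mu, \mu_\beta).
\end{equation*}
By Newton's shell theorem and ball disjointness, $\iint g\,d\sigma_{x_i}^{(r_i)}\,d\sigma_{x_j}^{(r_j)} = g(x_i - x_j)$ for $i \neq j$, so the off-diagonal contributions cancel and
\begin{equation*}
    G(\tilde\mu, \tilde\mu) - G(\mu, \mu) = \tfrac{1}{N^2}\sum_{i=1}^{N} g_{\mathrm{self}}(r_i),
\end{equation*}
where $g_{\mathrm{self}}(r) = C_d r^{2-d}$ for $d \geq 3$ and $g_{\mathrm{self}}(r) = -\bar c_2 \log r + O(1)$ for $d = 2$ is the self-energy of a uniform ball of radius $r$. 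Particles with $r_i = r_N$ contribute $N \cdot g_{\mathrm{self}}(r_N)/N^2$, which is exactly of order $N^{-2/d}$ in $d\geq 3$ and $\log N / N$ in $d = 2$, matching the announced error.

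The cross term is handled using the Poisson-type identity: since $\Delta h^{\mu_\beta} = \mu_\beta \in L^\infty$ by Proposition \ref{muthetaboundeduniformly}, one obtains
\begin{equation*}
    \bigl|\mathrm{avg}_{B(x_i, r_i)}(h^{\mu_\beta}) - h^{\mu_\beta}(x_i)\bigr| \leq C\, r_i^2 \|\mu_\beta\|_{L^\infty},
\end{equation*}
and summing over $i$ gives $|G(\tilde\mu - \mu, \mu_\beta)| = O(r_N^2) = O(N^{-2/d})$, which is absorbed into the error.

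The main obstacle is the self-energy contribution from particles with $r_i \ll r_N$, i.e.\ clusters of close-by points, for which $g_{\mathrm{self}}(r_i)$ can be arbitrarily large. The resolution is that whenever $r_i = \tfrac{1}{4}|x_i - x_{j(i)}|$ for some nearest neighbor $j(i)$, the original pair interaction $g(x_i - x_{j(i)}) = \Theta(g_{\mathrm{self}}(r_i))$ is already a term of $G(\mu, \mu)$. Combined with a packing bound from the kissing number of $\mathbf{R}^d$ (each particle is the nearest neighbor of at most a dimension-dependent constant number of others), one argues, either by an iterative subtraction scheme that absorbs close-pair contributions into $G(\mu, \mu)$ or by working directly with the truncated electric field $\nabla h^{\mu_\eta}$ of the standard Coulomb gas renormalization, that the total excess self-energy beyond $N g_{\mathrm{self}}(r_N)$ is controlled by close-pair Coulomb interactions, completing the estimate.
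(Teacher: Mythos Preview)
Your smearing idea, the BL estimate, and the cross-term bound are all correct, but the variable-radius construction creates a problem you do not actually resolve. With pairwise disjoint balls, Newton's theorem makes every off-diagonal interaction exactly $g(x_i-x_j)$, so
\[
G(\tilde\mu,\tilde\mu)-G(\mu,\mu)=\frac{1}{N^{2}}\sum_{i=1}^{N} g_{\mathrm{self}}(r_i),
\]
and nothing else in your decomposition can cancel this. If two particles sit at distance $\delta$, then $r_i\le \delta/4$, the corresponding self-energies contribute of order $\delta^{2-d}/N^{2}$, and the difference $G(\tilde\mu-\mu_\beta,\tilde\mu-\mu_\beta)-G(\mu-\mu_\beta,\mu-\mu_\beta)$ diverges as $\delta\to 0$. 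Your remark that the close-pair term $g(x_i-x_{j(i)})$ ``is already a term of $G(\mu,\mu)$'' does not help: $G(\mu,\mu)$ appears identically in $G(\mu-\mu_\beta,\mu-\mu_\beta)$ and therefore cancels in the difference you are trying to bound, so there is nothing left to absorb the excess self-energy. The kissing-number / iterative-subtraction sketch is too vague to repair this, and in any case cannot work for the construction as written, since with disjoint balls the off-diagonal terms are \emph{exactly} preserved.

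The fix---and this is what the paper does---is to smear at a \emph{uniform} radius $\epsilon=N^{-1/d}$ and drop the disjointness. Superharmonicity of $g$ (Newton's theorem in the form $\iint g(x-y)\,d\sigma^{(\epsilon)}_{x_i}(x)\,d\sigma^{(\epsilon)}_{x_j}(y)\le g(x_i-x_j)$, valid for \emph{all} $i\ne j$ whether or not the balls overlap) then gives directly
\[
G(P_\epsilon,P_\epsilon)\le G(P,P)+\frac{1}{N}\,g_{\mathrm{self}}(\epsilon)=G(P,P)+\frac{C}{N^{2/d}}
\]
(with the corresponding $\log N$ correction in $d=2$). The point is that when balls overlap the off-diagonal interaction strictly \emph{decreases}, and this decrease is precisely the compensation for close pairs that your disjoint-ball construction throws away. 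With uniform $\epsilon$ there is no close-particle issue at all; the self-energy term is always $g_{\mathrm{self}}(N^{-1/d})/N$. Your cross-term and BL estimates go through verbatim with this choice. (The ``truncated electric field'' alternative you mention at the end is essentially this same uniform-scale argument in the Rougerie--Serfaty language.)
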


This Proposition was already stated as Proposition  \ref{approximationbysmoothfunctions}. This section uses ideas very similar to ones found in \cite{rougerie2016higher} and \cite{hardin2018large}. We begin by recalling a few facts about the Coulomb Kernel. These can be found in \cite{chafai2018concentration}, or deduced using superharmonicity.

\begin{lemma}
Let $\lambda_{R}$ be the uniform probability measure on a ball of radius $R$ centered at $0$, then for every $x \in \mathbf{R}^{d},$ we have that 
\begin{equation}\label{superharmoniceq1}
   \int_{\mathbf{R}^{d}} g(x+u) \lambda_{R}(u) \, du \leq g(x)
\end{equation}
and also that 
\begin{equation}\label{superharmoniceq2}
    \iint_{\mathbf{R}^{d} \times \mathbf{R}^{d}} g(x+u-v) \lambda_{R}(u) \lambda_{R}(v) \, du \, dv \leq g(x).
\end{equation}
Furthermore, eqs \eqref{superharmoniceq1} and \eqref{superharmoniceq2} become an equality if $|x|>R.$
\end{lemma}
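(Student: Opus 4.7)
The proof reduces to a one-line consequence of the superharmonicity of the Coulomb kernel. Under the convention in which $g$ is a positive repulsive potential blowing up at the origin, $g$ is a superharmonic function on $\mathbf{R}^{d}$: its distributional Laplacian is a (negative) measure supported at $0$. The classical super-mean-value inequality therefore gives $g(x) \geq \mathrm{avg}_{B(x,R)} g$ for every $x \in \mathbf{R}^{d}$ and every $R > 0$, with equality whenever $B(x,R)$ misses the origin, in which case $g$ is harmonic on a neighborhood of $\overline{B(x,R)}$ and the ordinary harmonic mean-value theorem applies.

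For \eqref{superharmoniceq1}, the change of variables $y = x + u$ rewrites the left-hand side as exactly this ball average, since $\lambda_{R}$ is the normalized indicator of $B(0,R)$:
\[
\int_{\mathbf{R}^{d}} g(x+u)\,d\lambda_{R}(u) = \frac{1}{|B(0,R)|}\int_{B(x,R)} g(y)\,dy.
\]
The super-mean-value inequality then yields the bound by $g(x)$, and the equality for $|x| > R$ is immediate, since in that case $B(x,R)$ avoids the origin.

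For \eqref{superharmoniceq2}, Fubini together with the symmetry $\lambda_{R}(-v) = \lambda_{R}(v)$ gives
\[
\iint g(x+u-v)\,d\lambda_{R}(u)\,d\lambda_{R}(v) = \int \Phi(x-v)\,d\lambda_{R}(v),
\]
where $\Phi(y) := \int g(y+u)\,d\lambda_{R}(u) \leq g(y)$ by \eqref{superharmoniceq1}. A second application of \eqref{superharmoniceq1} to the outer integral then bounds the right-hand side by $g(x)$. For equality, both applications must be sharp, which forces $|x - v| > R$ for every $v \in B(0,R)$, i.e.\ $|x| > 2R$; the mild subtlety is that \eqref{superharmoniceq2} genuinely becomes an equality only for $|x| > 2R$, where the balls $B(0,R)$ and $B(x,R)$ are disjoint and Newton's shell theorem applies cleanly, which is slightly stronger than the $|x| > R$ stated in the lemma. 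Since the only substantive step is invoking the super-mean-value inequality for $g$, there is no serious obstacle in the proof.
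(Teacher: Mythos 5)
Your proof is correct and follows exactly the route the paper indicates: the paper offers no argument of its own for this lemma (it only cites \cite{chafai2018concentration} and the keyword ``superharmonicity''), and your super-mean-value argument for \eqref{superharmoniceq1}, followed by Fubini and a double application of \eqref{superharmoniceq1} for \eqref{superharmoniceq2}, is the standard and complete way to deduce it. Your side remark about the equality case of \eqref{superharmoniceq2} is also right, and it identifies a genuine (if minor) error in the lemma as stated: writing $\Phi = g * \lambda_{R}$, one has $\Phi(x-v) < g(x-v)$ on the positive-measure set $\{v \in B(0,R): |x-v|<R\}$, which is nonempty whenever $|x| < 2R$; hence the double convolution $g*\lambda_{R}*\lambda_{R}$ is \emph{strictly} below $g$ on the annulus $R < |x| < 2R$, and equality in \eqref{superharmoniceq2} holds only for $|x| \geq 2R$. (Equivalently, $\lambda_{R}*\lambda_{R}$ is a radial probability measure supported in $B(0,2R)$ with mass strictly outside $B(0,|x|)$ for $|x|<2R$, so Newton's theorem gives strict inequality there.) This propagates to the subsequent lemma, where the equality claim should read $2\epsilon \leq \min|x_{i}-x_{j}|$ rather than $\epsilon \leq \min|x_{i}-x_{j}|$; however, only the inequality direction of \eqref{superharmoniceq2} is used in the proof of Proposition \ref{approximationbysmoothfunctions2} and of the main theorem, so the slip is harmless for the paper's results.
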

The next lemma can also be found in \cite{chafai2018concentration} (or verified by direct computation).
\begin{lemma}
Let $\lambda_{R}$ be the uniform measure on the ball of radius $R,$ then for $d \geq 3$
\begin{equation}
    G(\lambda_{R}, \lambda_{R})= \frac{g(R)}{g(1)}    G(\lambda_{1}, \lambda_{1}).
\end{equation}
For $d=2$ we have that
\begin{equation}
     G(\lambda_{R}, \lambda_{R})=g(R)+G(\lambda_{1}, \lambda_{1}).
\end{equation}
\end{lemma}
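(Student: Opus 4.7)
The plan is to prove both identities by a single change of variables that exploits the scaling behavior of the Coulomb kernel. Writing out the definition,
\begin{equation}
    G(\lambda_R, \lambda_R) = \iint_{\mathbf{R}^d \times \mathbf{R}^d} g(x-y)\, d\lambda_R(x)\, d\lambda_R(y),
\end{equation}
I would substitute $x = Rx'$ and $y = Ry'$ with $x', y' \in B(0,1)$. The key point is that $\lambda_R$ is a probability measure with density $R^{-d}/|B(0,1)|$ on $B(0,R)$, and the Jacobian of the dilation on each factor is $R^d$; these cancel exactly, so $d\lambda_R(x)\, d\lambda_R(y)$ transforms into $d\lambda_1(x')\, d\lambda_1(y')$. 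The problem then reduces to evaluating $g(R(x'-y'))$ in terms of $g(x'-y')$ and $g(R)$.

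For $d \geq 3$, the kernel $g(z) = \overline{c}_d/|z|^{d-2}$ is homogeneous of degree $-(d-2)$, so
\begin{equation}
    g(Rz) = R^{2-d}\, g(z) = \frac{g(R)}{g(1)}\, g(z),
\end{equation}
using that $g(R)/g(1) = R^{2-d}$. Pulling this scalar out of the double integral yields $G(\lambda_R, \lambda_R) = (g(R)/g(1))\, G(\lambda_1, \lambda_1)$. For $d = 2$, the kernel is $g(z) = -\overline{c}_2 \log|z|$, and the multiplicative scaling is replaced by an additive one:
\begin{equation}
    g(Rz) = -\overline{c}_2 \log R - \overline{c}_2 \log|z| = g(R) + g(z).
\end{equation}
Integrating the two terms separately against $d\lambda_1(x')\, d\lambda_1(y')$ and using that $\lambda_1$ has total mass $1$ gives the constant $g(R) \cdot 1 \cdot 1 = g(R)$ plus $G(\lambda_1, \lambda_1)$, which is the second identity.

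There is essentially no obstacle here; the only thing worth checking is that the double integral converges absolutely so that Fubini and the change of variables can be applied without issue. Since $g$ lies in $L^1_{\mathrm{loc}}(\mathbf{R}^d)$ (both $|z|^{2-d}$ for $d \geq 3$ and $\log|z|$ for $d=2$ are locally integrable near the origin in their respective dimensions) and $\lambda_R$ is a bounded compactly supported density, the integrand is in $L^1(\mathbf{R}^d \times \mathbf{R}^d)$, which justifies both steps and closes the argument.
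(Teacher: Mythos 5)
Your proof is correct: the dilation change of variables together with the homogeneity of $g$ for $d\geq 3$ (and the additive behavior of the logarithm for $d=2$) is exactly the ``direct computation'' the paper alludes to when it states this lemma without proof, and your integrability remark justifies the manipulations. Nothing is missing; note only that the paper's $G$ excludes the diagonal $\Delta$, which is harmless here since $\lambda_R\otimes\lambda_R$ gives it measure zero.
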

We need one more lemma before embarking on the proof of \ref{approximationbysmoothfunctions2}. 

\begin{lemma}
Let $\left\{x_{i} \right\}_{i=1}^{N} \in \mathbf{R}^{d},$  let $P=\frac{1}{N}\sum_{i=1}^{N} \delta_{x_{i}}$ and $P_{\epsilon}=P \ast \lambda_{\epsilon}.$ Then, if $d \geq 3$,
\begin{equation}\label{approximatingenergybysmoothfunctions}
    \frac{1}{N^{2}}\sum_{i\neq j} g(x_{i}-x_{j}) \geq G\left( P_{\epsilon}, P_{\epsilon} \right)-\frac{1}{N g(1)} g(\epsilon)G(\lambda_{1}, \lambda_{1}).
\end{equation}
Furthermore, eq. \eqref{approximatingenergybysmoothfunctions} is an equality if $\epsilon \leq \min\left\{ |x_{i}-x_{j}|\right\}.$
\end{lemma}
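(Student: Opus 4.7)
The plan is to expand $G(P_{\epsilon}, P_{\epsilon})$ directly from its definition and separate the diagonal from the off-diagonal contributions, then dispatch each piece with one of the two preliminary lemmas. Writing $P_{\epsilon} = \frac{1}{N}\sum_{i} \delta_{x_{i}} \ast \lambda_{\epsilon}$ and using bilinearity of $G$, I would obtain
\begin{equation}
    G(P_{\epsilon}, P_{\epsilon}) = \frac{1}{N^{2}} \sum_{i,j} \iint g(x_{i}-x_{j}+u-v)\,\lambda_{\epsilon}(u)\,\lambda_{\epsilon}(v)\, du\, dv.
\end{equation}

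Next I would treat the two types of terms separately. The $N$ diagonal terms ($i=j$) each equal $G(\lambda_{\epsilon}, \lambda_{\epsilon})$ by translation invariance, so the diagonal contributes exactly $\frac{1}{N} G(\lambda_{\epsilon}, \lambda_{\epsilon})$. By the second preliminary lemma of the appendix (in the case $d \geq 3$), this equals $\frac{1}{N}\,\frac{g(\epsilon)}{g(1)}\,G(\lambda_{1}, \lambda_{1})$. For each off-diagonal term ($i \neq j$), I would apply the superharmonicity estimate \eqref{superharmoniceq2} with $R=\epsilon$ and with the shift $x = x_{i} - x_{j}$, yielding
\begin{equation}
    \iint g(x_{i}-x_{j}+u-v)\,\lambda_{\epsilon}(u)\,\lambda_{\epsilon}(v)\, du\, dv \leq g(x_{i}-x_{j}).
\end{equation}
Summing the off-diagonal bounds and adding the diagonal contribution gives
\begin{equation}
    G(P_{\epsilon}, P_{\epsilon}) \leq \frac{1}{N g(1)}\, g(\epsilon)\, G(\lambda_{1}, \lambda_{1}) + \frac{1}{N^{2}} \sum_{i \neq j} g(x_{i}-x_{j}),
\end{equation}
which rearranges to the claimed inequality \eqref{approximatingenergybysmoothfunctions}.

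For the equality statement, the only inequality used is \eqref{superharmoniceq2}, which becomes an equality precisely when the shift lies outside the relevant ball (the condition in the first preliminary lemma). When $\epsilon \leq \min_{i\neq j}|x_{i}-x_{j}|$, every off-diagonal pair satisfies this condition (at the boundary case by continuity of the integrand), so no estimate is wasted and the inequality becomes an equality. I do not foresee any real obstacle in this proof; the argument is essentially a bookkeeping exercise between the diagonal and off-diagonal parts of a double sum, with the two preliminary lemmas (themselves consequences of Newton's shell theorem) doing all of the analytic work.
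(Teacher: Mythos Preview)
Your proposal is correct and is precisely the standard argument. The paper itself does not give a proof but merely cites \cite{serfaty2020mean} and \cite{chafai2018concentration}; the diagonal/off-diagonal decomposition you carry out, together with the two preliminary lemmas, is exactly what one finds in those references.
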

\begin{proof}
The proof is found in \cite{serfaty2020mean} and in \cite{chafai2018concentration}.
\end{proof}

We now give the proof of Proposition \ref{approximationbysmoothfunctions2}:
\begin{proof}
(Of Proposition \ref{approximationbysmoothfunctions2})

Our goal is to prove that 
\begin{equation}
    G\left( P-\mu_{\beta}, P-\mu_{\beta} \right)  \geq   G\left( P_{\epsilon}-\mu_{\beta}, P_{\epsilon} -\mu_{\beta} \right)  -\frac{C}{N^\frac{2}{d}}
\end{equation}
for some constant $C$, and the right choice of $\epsilon$ (which will depend on $N$). 

Expanding, we get
\begin{equation}
    G\left( P_{\epsilon}-\mu_{\beta}, P_{\epsilon} -\mu_{\beta} \right)=  G\left( P_{\epsilon}, P_{\epsilon}\right)- 2G\left( P_{\epsilon},\mu_{\beta} \right)+ G\left(\mu_{\beta},\mu_{\beta} \right).
\end{equation}

Using equation \eqref{approximatingenergybysmoothfunctions} we  immediately get that, if $d \geq 3$ then
\begin{equation}
    G\left( P_{\epsilon}, P_{\epsilon}\right) \geq G\left( P, P\right)+\frac{C g(\epsilon)}{N }.
\end{equation}

Our goal is now to get an upper bound for $G\left( P_{\epsilon},\mu_{\beta} \right)$ in terms of $G\left( P,\mu_{\beta} \right),$ which we do using superharmonicity. For any $\epsilon>0,$ we begin by writing, for $d \geq 3$ (recall the abuse of notation of not distinguishing between a measure and its density):
\begin{equation}\label{firstboundodsmearing}
\begin{split}
    &G(P, \mu_{\beta})=\\
    & \frac{1}{N}\sum_{i=1}^{N} \int_{\mathbf{R}^{d}} g(y-x_{i}) \mu_{\beta}(y) \, dy = \\
    & \frac{1}{N}\sum_{i=1}^{N}  \int_{\mathbf{R}^{d} \setminus B(x_{i}, \epsilon)} g(y-x_{i}) \mu_{\beta}(y) \, dy+  \int_{ B(x_{i}, \epsilon)} g(y-x_{i}) \mu_{\beta}(y) \, dy =\\
     & \frac{1}{N}\sum_{i=1}^{N}  \iint_{y \in \mathbf{R}^{d} \setminus B(x_{i}, \epsilon)\,  ,s\in \mathbf{R}^{d}} g(y-x_{i}+s) \lambda_{\epsilon}(s) \mu_{\beta}(y) \, ds \, dy\ + \\
     &\quad \quad \quad \quad \int_{ B(x_{i}, \epsilon)} g(y-x_{i}) \mu_{\beta}(y) \, dy =\\
     & \frac{1}{N}\sum_{i=1}^{N}  \iint_{y \in \mathbf{R}^{d} \setminus B(x_{i}, \epsilon)\,  ,s\in \mathbf{R}^{d}} g(y-x_{i}+s) \lambda_{\epsilon}(s) \mu_{\beta}(y) \, ds \, dy\ +\\
     &\quad \quad \quad \quad \int_{y \in  B(x_{i}, \epsilon), \, s \in \mathbf{R}^{d}} g(y-x_{i}+s) \mu_{\beta}(y) \, dy \, d(\delta_{0}+\lambda_{\epsilon}-\lambda_{\epsilon})(s) = \\
     &\frac{1}{N} \sum_{i=1}^{N}  \iint_{\mathbf{R}^{d} \times \mathbf{R}^{d}} g(y-x_{i}+s) \lambda_{\epsilon}(s) \mu_{\beta}(y) \, ds \, dy\ +\\
       &\quad \quad \quad \quad \int_{y \in  B(x_{i}, \epsilon), \, s \in \mathbf{R}^{d}} g(y-x_{i}+s) \mu_{\beta}(y) \, dy \, d(\delta_{0}-\lambda_{\epsilon})(s) = \\
     & G(P_{\epsilon}, \mu_{\beta}) + \frac{1}{N} \sum_{i=1}^{N}  \int_{y \in  B(x_{i}, \epsilon)} \left[ -\int_{s \in \mathbf{R}^{d}} g(y-x_{i}+s)\lambda_{\epsilon}(s) \, ds+g(x_{i}-y) \right] \mu_{\beta}(y) \, dy \leq \\
     & G(P_{\epsilon}, \mu_{\beta}) + \frac{1}{N} \sum_{i=1}^{N}  \int_{y \in  B(x_{i}, \epsilon)} g(x_{i}-y) \mu_{\beta}(y) \, dy. 
\end{split}
\end{equation}

Since $\mu_{\beta}$ is uniformly bounded in $L^{\infty}$ by Proposition \ref{muthetaboundeduniformly}, we then have that, for $d \geq 3$, 
\begin{equation}
    \begin{split}
        G(P_{\epsilon}, \mu_{\beta}) &\geq G(P, \mu_{\beta}) - \max_{\beta} \{\parallel \mu_{\beta} \parallel_{L^{\infty}} \} \int_{y \in  B(0, \epsilon)} g(y) \, dy \\
         &=G(P, \mu_{\beta}) - C \epsilon^{2},
    \end{split}
\end{equation}
where $C$ depends on $\| \mu_{V} \|_{L^{\infty}}$

In the last equation, we have used that, if $d\geq 3$ then
\begin{equation}
    \begin{split}
        \int_{y \in  B(0, \epsilon)} g(y) \, dy &=c_{d}\int_{0}^{\epsilon} \frac{r^{d-1}}{r^{d-2}}\\
        &=C_{d} \epsilon^{2}
    \end{split}
\end{equation}
where $c_{d}$ depends on $d.$

In conclusion, we have, for $d\geq 3,$ 
\begin{equation}
    G\left( P-\mu_{\beta}, P -\mu_{\beta} \right) \geq G\left( P_{\epsilon}-\mu_{\beta}, P_{\epsilon} -\mu_{\beta} \right)-C\frac{g(\epsilon)}{N }-C_{d} \epsilon^{2}.
\end{equation}
Taking $\epsilon=\frac{1}{N^{\frac{1}{d}}},$ we have that, for $d \geq 3$ 
\begin{equation}
    \begin{split}
    G\left( P-\mu_{\beta}, P -\mu_{\beta} \right) &\geq G\left( P_{\frac{1}{N^{\frac{1}{d}}}}-\mu_{\beta}, P_{\frac{1}{N^{\frac{1}{d}}}} -\mu_{\beta} \right)-C\frac{g\left(\frac{1}{N^{\frac{1}{d}}}\right)}{N }-C_{d} \left( \frac{1}{N^{\frac{1}{d}}}\right)^{2} \\
    &=G\left( P_{\frac{1}{N^{\frac{1}{d}}}}-\mu_{\beta}, P_{\frac{1}{N^{\frac{1}{d}}}} -\mu_{\beta} \right) - C N^{-\frac{2}{d}}.
    \end{split}
\end{equation}

To deal with the case $d=2,$ we start from the penultimate line of equation \eqref{firstboundodsmearing} (note that until this point, equation \eqref{firstboundodsmearing} is valid for $d=2$) and proceed as in lemma 3.5 of \cite{rougerie2016higher}:
\begin{equation}
    \begin{split}
      & \    \left| \int_{y \in  B(x_{i}, \epsilon)} \left[ -\int_{s \in \mathbf{R}^{d}} g(y-x_{i}+s)\lambda_{\epsilon}(s) \, ds+g(x_{i}-y) \right] \mu_{\beta}(y) \, dy  \right| \\
       &\leq \|\mu_{\beta}\|_{L^{\infty}} \int_{y \in  B(x_{i}, \epsilon)} \left| \int_{s \in \mathbf{R}^{d}} \log|y-x_{i}+s|\lambda_{\epsilon}(s) \, ds-\log|x_{i}-y| \right|  \, dy \\
       &=  \|\mu_{\beta}\|_{L^{\infty}} \epsilon^{2} \int_{y \in  B(0, 1)} \left| \int_{s \in \mathbf{R}^{d}} \log|y+s|\lambda_{1}(s) \, ds-\log|y| \right|  \, dy \\
       &\leq C \|\mu_{\beta}\|_{L^{\infty}} \epsilon^{2}.
    \end{split}
\end{equation}
In conclusion, we have, for $d= 2,$ 
\begin{equation}
    G\left( P-\mu_{\beta}, P -\mu_{\beta} \right) \geq G\left( P_{\epsilon}-\mu_{\beta}, P_{\epsilon} -\mu_{\beta} \right)-g(\epsilon)\frac{1}{N }-C_{1} \epsilon^{2}-c_{2}N.
\end{equation}
Taking $\epsilon=\frac{1}{N^{\frac{1}{2}}},$ we have that
\begin{equation}
    G\left( P-\mu_{\beta}, P -\mu_{\beta} \right) \geq \inf_{\mu \in B_{N}\cap  \mathcal{S}}G\left( P_{\frac{1}{N^{\frac{1}{2}}}}-\mu_{\beta}, P_{\frac{1}{N^{\frac{1}{2}}}} -\mu_{\beta} \right) - \frac{C}{N} - \frac{1}{2N}\log N,
\end{equation}
where $C$ depends on $\| \mu_{V} \|_{L^{\infty}}.$

In order to conclude, we now only need to show that
\begin{equation}
    \parallel P-P_{N^{-\frac{1}{d}}} \parallel_{BL} \leq \frac{k}{N^{\frac{1}{d}}}
\end{equation}
for any $d \geq 2$, where $k$ depends only on $d$. The reason is elementary. Let $\varphi \in W^{1, \infty}$ be such that
\begin{equation}
    \| \varphi \|_{W^{1,\infty}}=1.
\end{equation}
Then
\begin{equation}
\left| \int_{\mathbf{R}^{d}} \varphi \,  d(P-P_{\epsilon}) \right| \leq \frac{1}{N} \sum_{i=1}^{N} \left| \strokedint_{B(x_{i}, \epsilon)} \varphi(x)dx -\varphi(x_{i}) \right|.
\end{equation}
Since $\|\varphi\|_{W^{1,\infty}}=1,$ we have, for $x \in B(x_{i}, \epsilon)$ that
\begin{equation}
    \| \varphi(x)-\varphi(x_{i})\| \leq \epsilon,
\end{equation}
therefore
\begin{equation}
    \left| \strokedint_{B(x_{i}, \epsilon)} \varphi(x)dx -\varphi(x_{i}) \right| \leq \epsilon,
\end{equation}
and 
\begin{equation}
\left| \int_{\mathbf{R}^{d}} \varphi \,  d(P-P_{\epsilon}) \right| \leq \epsilon.    
\end{equation}

\end{proof}

\section{Appendix 2}

This appendix is devoted to proving results the related to $H^{-1}(K)$ and $H^{-1}$ norms that we used in the paper. The first result we need is 
\begin{lemma}\label{extensionlemma2}
Let $\Omega \subset \mathbf{R}^{d}$ be an open bounded set with a $C^{2}$ boundary, and let $f \in H^{1}({\Omega}).$ 
Let 
\begin{equation}
    \epsilon_{*}=\sup \{ \epsilon>0| x \mapsto x+\epsilon\nu(x) \text{ is a diffeomorphism for all } |\delta|<\epsilon \},
\end{equation}
where $\nu(x)$ is the unit normal to $\partial \Omega$ at $x.$ Then for every $0<\epsilon<\epsilon_{*}$ there exists $f_{\epsilon} \in H^{1}(\mathbf{R}^{d})$ such that 
\begin{itemize}
    \item[$\bullet$] The restriction satisfies $f_{\epsilon}|_{\Omega}=f.$
    \item[$\bullet$] The support satisfies $\mbox{supp}(f_{\epsilon}) \subset \overline{\Omega}_{\epsilon},$ where
    \begin{equation}
        \Omega_{\epsilon}=\{ x \in \mathbf{R}^{d}| d(x,\Omega) < \epsilon\}.
    \end{equation}
    \item[$\bullet$] We have control of the norms:
    \begin{equation}
        \begin{split}
            \parallel \nabla f_{\epsilon} \parallel_{L^{2}} &\leq \frac{C}{\sqrt{\epsilon}} \parallel f \parallel_{H^{1}}\\
            \parallel f_{\epsilon} \parallel_{L^{2}} &\leq (1+k\sqrt{\epsilon}) \parallel f \parallel_{L^{2}},
        \end{split}
    \end{equation}
    where $C$ and $k$ are constants that depend only on $\Omega$. 
\end{itemize}

In addition, if $\text{tr}(f) \geq 0,$ then $f_{\epsilon}$ is non negative in $\Omega_{\epsilon} \setminus \Omega.$
\end{lemma}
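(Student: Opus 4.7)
The plan is to construct $f_\epsilon$ by reflecting $f$ across $\partial\Omega$ into the outer $\epsilon$-collar and multiplying by a cutoff. By the $C^2$ hypothesis and the definition of $\epsilon_*$, the Fermi map $\Phi(x,t)=x+t\nu(x)$ is a diffeomorphism on $\partial\Omega\times(-\epsilon_*,\epsilon_*)$ onto a tubular neighborhood of $\partial\Omega$, with Jacobian $J(x,t)=1+O(t)$ as $t\to 0$. Fix a smooth cutoff $\phi:\mathbf{R}\to[0,1]$ with $\phi\equiv 1$ on $(-\infty,1/2]$, $\phi\equiv 0$ on $[1,\infty)$, and $|\phi'|\leq 4$. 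Define
\begin{equation}
f_\epsilon(y)=\begin{cases}f(y),&y\in\Omega,\\ \phi(t/\epsilon)\,f(\Phi(x,-t)),&y=\Phi(x,t),\ t\in(0,\epsilon),\\ 0,&\text{otherwise.}\end{cases}
\end{equation}
Since the traces match at $\partial\Omega$ (because $\phi(0)=1$), $f_\epsilon\in H^1(\mathbf{R}^d)$; by construction $f_\epsilon|_\Omega=f$ and $\text{supp}\,f_\epsilon\subset\overline{\Omega}_\epsilon$.

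The central analytic step is a uniform bound on the mass of $f$ in the inner collar $U_\epsilon:=\{y\in\Omega:\text{dist}(y,\partial\Omega)<\epsilon\}$. For a.e.\ $x\in\partial\Omega$ the fundamental theorem of calculus on the inward normal ray gives $|f(\Phi(x,-t))|^2\leq 2|\text{tr}(f)(x)|^2+2t\int_0^t|\partial_\nu f(\Phi(x,-s))|^2\,ds$. Integrating over $(x,t)\in\partial\Omega\times(0,\epsilon)$ and using the trace inequality $\|\text{tr}(f)\|_{L^2(\partial\Omega)}\leq C\|f\|_{H^1(\Omega)}$ yields $\|f\|_{L^2(U_\epsilon)}^2\leq C\epsilon\|f\|_{H^1(\Omega)}^2$. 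Since reflection preserves $L^2$ mass up to the Jacobian factor $1+O(\epsilon)$, this gives $\|f_\epsilon\|_{L^2(\text{collar})}^2\leq(1+O(\epsilon))\|f\|_{L^2(U_\epsilon)}^2\leq C\epsilon\|f\|_{H^1(\Omega)}^2$, which combined with $\|f_\epsilon\|_{L^2(\Omega)}=\|f\|_{L^2(\Omega)}$ yields the stated $L^2$ bound.

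For the gradient estimate, apply the chain rule in Fermi coordinates: on the collar, $\nabla f_\epsilon$ decomposes as a cutoff-derivative piece pointwise bounded by $(|\phi'|/\epsilon)|f\circ\sigma|$, with $\sigma(\Phi(x,t))=\Phi(x,-t)$ the reflection, plus an interior-derivative piece bounded by $C|\nabla f|\circ\sigma$, where $C$ depends only on $C^2$-bounds of $\partial\Omega$. Squaring and integrating,
\begin{equation}
\|\nabla f_\epsilon\|_{L^2(\text{collar})}^2\leq\frac{C}{\epsilon^2}\|f\|_{L^2(U_\epsilon)}^2+C\|\nabla f\|_{L^2(U_\epsilon)}^2\leq\frac{C}{\epsilon}\|f\|_{H^1(\Omega)}^2,
\end{equation}
yielding $\|\nabla f_\epsilon\|_{L^2}\leq C\epsilon^{-1/2}\|f\|_{H^1}$.

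For non-negativity, if $\text{tr}(f)\geq 0$ replace the reflection of $f$ by that of its positive part: $f_\epsilon(\Phi(x,t))=\phi(t/\epsilon)f_+(\Phi(x,-t))$ on the collar. Traces still match because $\text{tr}(f_+)=(\text{tr}(f))_+=\text{tr}(f)$, and $|\nabla f_+|\leq|\nabla f|$ preserves every norm estimate above, while positivity on the outer collar is automatic. The main subtlety will be verifying weak differentiability across $\partial\Omega$ for the glued function: this is where the trace-matching at $t=0$ is essential, and $C^2$-regularity of $\partial\Omega$ ensures $\Phi$ (hence $\sigma$) is $C^1$ enough that integration by parts in collar coordinates glues to integration by parts in $\Omega$ with no jump contribution at the interface.
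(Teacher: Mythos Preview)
Your construction differs from the paper's in one essential respect, and this difference causes the $L^2$ estimate to fail as stated. The paper first reflects $f$ across $\partial\Omega$ into a \emph{fixed}-width inner collar of depth $\alpha$ (independent of $\epsilon$), multiplies by a cutoff there, and then \emph{rescales} normally by the factor $\alpha/\epsilon$ to compress the result into the $\epsilon$-collar. The rescaling is what drives both norm bounds: the normal compression produces the Jacobian factor $\sqrt{\epsilon/\alpha}$ in $L^2$, yielding $\|f_\epsilon\|_{L^2(\text{collar})}\le\sqrt{\epsilon/\alpha}\,\|f\|_{L^2(\Omega)}$ and hence the stated bound purely in terms of $\|f\|_{L^2}$; and it produces the factor $\sqrt{\alpha/\epsilon}$ on the normal derivative, giving $\|\nabla f_\epsilon\|_{L^2}\le C\epsilon^{-1/2}\|f\|_{H^1}$ directly.

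Your direct reflection (no rescaling) into the $\epsilon$-collar gives only $\|f_\epsilon\|_{L^2(\text{collar})}\lesssim\|f\|_{L^2(U_\epsilon)}$, and your own collar estimate bounds this by $C\sqrt{\epsilon}\,\|f\|_{H^1}$, not by $C\sqrt{\epsilon}\,\|f\|_{L^2}$. So your argument actually proves $\|f_\epsilon\|_{L^2}\le\|f\|_{L^2}+C\sqrt{\epsilon}\,\|f\|_{H^1}$, which is strictly weaker than the stated $(1+k\sqrt{\epsilon})\|f\|_{L^2}$. This is not a slip in the write-up but a limitation of the construction: take $f$ supported in $U_\epsilon$ with $\|f\|_{L^2(U_\epsilon)}=\|f\|_{L^2(\Omega)}$; then your $f_\epsilon$ has $\|f_\epsilon\|_{L^2}\sim\sqrt{2}\,\|f\|_{L^2}$, violating $(1+k\sqrt{\epsilon})\|f\|_{L^2}$ for small $\epsilon$.

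Your gradient estimate, by contrast, is correct and arguably cleaner than the paper's: you recover the $\epsilon^{-1/2}$ from the interplay of the cutoff derivative $\phi'/\epsilon$ with the trace-based collar estimate $\|f\|_{L^2(U_\epsilon)}^2\le C\epsilon\|f\|_{H^1}^2$, avoiding any partition of unity or chart-by-chart flattening. The positivity argument via $f_+$ is also fine. If you want to keep your global Fermi-coordinate approach but match the lemma verbatim, insert the normal rescaling: reflect into the inner $\epsilon_*$-collar, cut off there, then set $f_\epsilon(\Phi(x,t))=\widehat{f}(\Phi(x,-\tfrac{\epsilon_*}{\epsilon}t))$ for $t\in(0,\epsilon)$. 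Alternatively, note that in every application of the lemma in the paper one has $\|f\|_{H^1}=1$, so your weaker $L^2$ bound would in fact suffice downstream.
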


\begin{proof}
Step 1. Let $x \in \partial \Omega.$ We will use the notation 
\begin{equation}
    x=(\underline{x}, x_{d}),
\end{equation}
where $\underline{x} \in \mathbf{R}^{d-1}$ and $x_{d} \in \mathbf{R}.$ Assume for now that $\partial \Omega$ is flat near $x.$ In other words, that there exists some $\delta>0$ such that
\begin{equation}
    B(x,\delta) \bigcap \partial \Omega = \{ y| y_{d}=0\} \bigcap B(x,\delta),
\end{equation}
and in addition $\nu = (0,0,...1)$.
Let
\begin{equation}
    \underline{B}(x,\delta)= \{ y| y_{d}=0\} \bigcap B(x,\delta).
\end{equation}
 Let $\alpha>0$ be such that 
\begin{equation}
     \underline{B}(x,\delta) \times (-\alpha, 0) \subset \Omega.
\end{equation}
Note that $\alpha$ exists since by hypothesis $\partial \Omega$ is $C^{2}.$ Define a function $\varphi: \underline{B}(x,\delta) \times (0,\alpha) \to \mathbf{R}$ as 
\begin{equation}
    \varphi(\underline{y}, y_{d})= f(\underline{y}, -y_{d}).
\end{equation}
Let $\mu\in C^{\infty}( [0,\alpha], \mathbf{R}^{+})$ be such that $\mu(0)=1,\    \mu(\alpha)=0,$ and $\mu$ is decreasing. Consider now $\widehat{\varphi}: \underline{B}(x,\delta) \times (0,\alpha) \to \mathbf{R}$ defined as 
\begin{equation}
    \widehat{\varphi}(\underline{y}, y_{d})=\varphi(\underline{y}, y_{d}) \mu(y_{d}).
\end{equation}
Lastly, define the function $\varphi_{\epsilon}: \underline{B}(x,\delta) \times (0,\epsilon) \to \mathbf{R}$ as
\begin{equation}
    \varphi_{\epsilon}(\underline{y}, y_{d})=\widehat{\varphi}\left(\underline{y}, \frac{\alpha}{\epsilon}y_{d}\right).
\end{equation}
We immediately get the estimates
\begin{equation}
    \begin{split}
        \parallel \varphi_{\epsilon} \parallel_{L^{2}} &= \sqrt{\frac{\epsilon}{\alpha}}  \parallel \widehat{\varphi} \parallel_{L^{2}}\\
        &\leq \sqrt{\frac{\epsilon}{\alpha}}  \parallel {\varphi} \parallel_{L^{2}}\\
        &\leq \sqrt{\frac{\epsilon}{\alpha}}  \parallel f \parallel_{L^{2}}.
    \end{split}
\end{equation}
We also have the estimates
\begin{equation}
    \begin{split}
        \parallel \nabla \varphi_{\epsilon} \parallel_{L^{2}} &\leq C \max\left( \frac{\sqrt{\alpha}}{\sqrt{\epsilon}}, 1 \right)  \parallel \nabla \widehat{\varphi} \parallel_{L^{2}}\\
        &\leq C \max\left( \frac{\sqrt{\alpha}}{\sqrt{\epsilon}}, 1 \right) \left( \parallel \nabla({\varphi})\mu \|_{L^{2}} + \| \varphi \frac{d}{dx}\mu \parallel_{L^{2}} \right)\\
        &\leq C \max\left( \frac{\sqrt{\alpha}}{\sqrt{\epsilon}}, 1 \right)  \parallel f \parallel_{H^{1}},
    \end{split}
\end{equation}
where $C$ depends only on $\Omega.$

Lastly, if $\text{tr}(f) \geq 0,$ consider the function 
\begin{equation}
    M\varphi_{\epsilon}=\max \left( \varphi_{\epsilon}, 0 \right).
\end{equation}
Then $M\varphi_{\epsilon}$ is positive, and 
\begin{equation}
    M\varphi_{\epsilon}(\underline{y}, 0)=\varphi_{\epsilon}(\underline{y}, 0)
\end{equation}
for any $\underline{y} \in  \underline{B}(x,\delta).$
Using the identity
\begin{equation}
    M\varphi_{\epsilon} = \frac{1}{2} \left( |\varphi_{\epsilon} |+\varphi_{\epsilon}  \right),
\end{equation}
we get that
\begin{equation}
    \begin{split}
        \parallel \nabla M\varphi_{\epsilon} \parallel_{L^{2}} &\leq  \parallel \nabla \varphi_{\epsilon} \parallel_{L^{2}} \\
        \parallel  M\varphi_{\epsilon} \parallel_{L^{2}} &\leq  \parallel \varphi_{\epsilon} \parallel_{L^{2}}. 
    \end{split}
\end{equation}

Step 2. Now we turn to the general case, where $\partial \Omega$ is not necessarily locally flat. Since by assumption $\partial \Omega$ is $C^{2},$ there exist finitely many balls $B(x_{i}, \epsilon_{i})$ and $C^{2}$ diffeomorphisms $g_{i}: U_{i} \subset \mathbf{R}^{d-1} \to \mathbf{R}^{d}$ such that
\begin{equation}
    g_{i}\left( U_{i} \right)= B(x_{i}, \epsilon_{i}) \bigcap \partial \Omega.
\end{equation}
For any $\delta < \epsilon_{*}, $ we can extend $g_{i}$ to a $C^{1}$ diffeomorphism $\overline{g}_{i}: U_{i} \times (-\delta, \delta) \to P_{i}^{\delta},$ where
\begin{equation}
P_{i}^{\delta}= \{ x+s\nu(x)| x \in  B(x_{i}, \epsilon_{i}) \bigcap \partial \Omega, s \in (-\delta, \delta) \},
\end{equation}
where $\nu(x)$ is the unit normal to the point $x.$ We define $\overline{g}_{i}$ as
\begin{equation}
    \overline{g}_{i}(\underline{x}, s)= g_{i}(\underline{x})+s\nu(g_{i}(\underline{x})).
\end{equation}

Now define for any $\epsilon<\epsilon_{*}$ the function $\varphi_{\epsilon}^{i}:  U_{i} \times (-\epsilon, \epsilon)$ as in step 1, with $\alpha=\epsilon_{*}$. If $\text{tr}(f) \geq 0,$ define $M \varphi_{\epsilon}^{i}$ as in step 1. 

Define the functions $\phi_{\epsilon}^{i}$ as 
\begin{equation}
    \phi_{\epsilon}^{i}= \overline{g}_{i} \circ \varphi_{\epsilon}^{i}\circ \overline{g}_{i}^{-1}.
\end{equation}
 If $\text{tr}(f) \geq 0,$ define the functions $M\phi_{\epsilon}^{i}$ as 
\begin{equation}
    M\phi_{\epsilon}^{i}= \overline{g}_{i} \circ M\varphi_{\epsilon}^{i}\circ \overline{g}_{i}^{-1}.
\end{equation}

Lastly, take a partition of unity $q_{i}$ associated to $P_{i}^{\epsilon_{*}}.$ Define the extension $f_{\epsilon}$ as 
\begin{equation}
   f_{\epsilon}(x) =
   \begin{cases}
    f(x) \text{ if } x \in \Omega\\
    \sum_{i}(q_{i} \phi_{i}^{\epsilon})  \text{ if } x \in \bigcup P_{i}^{\delta}\\
    0 \text{ o.w.}
    \end{cases}
\end{equation}

 If $\text{tr}(f) \geq 0,$ define the extension $Mf_{\epsilon}$ as 
\begin{equation}
   Mf_{\epsilon}(x) =
   \begin{cases}
    f(x) \text{ if } x \in \Omega\\
    \sum_{i}(q_{i} M\phi_{i}^{\epsilon})  \text{ if } x \in \bigcup P_{i}^{\delta}\\
    0 \text{ o.w.}
    \end{cases}
\end{equation}

It is easy to check that $f_{\epsilon}, Mf_{\epsilon}$ saitsfy the desired properties. 
\end{proof}

Next is a proposition which is not directly related to the concentration inequality, but we include since it is needed to prove remark \ref{theofromBLtoH-1(2)}. 

\begin{proposition}\label{fromBLtoH-1(2)}
Let $\nu$ be a measure such that $\|\nu\|_{H^{-1}} < \infty$. Assume that there exist compact sets $K_{1,} K_{2}$ with $K_{1}$ properly contained in $K_{2}$ such that $\nu|_{K_{2} \setminus K_{1}} \in L^{2},$ and the boundary of $K_{1}$ is $C^{2}.$ Then there exists a constant $c,$ which depends only on $K_{1}, K_{2},$ and $\parallel \nu|_{K_{2} \setminus K_{1}} \parallel_{L^{2}}$ such that
\begin{equation}
    \parallel \nu \parallel_{H^{-1}} \geq c  \parallel \nu|_{K_{1}} \parallel_{H^{-1}(K_{1})}.
\end{equation}
\end{proposition}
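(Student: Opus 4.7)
The plan is to use duality to convert the statement into a question about extending $H^{1}$ test functions, then apply Lemma \ref{extensionlemma2} to produce an extension supported inside $K_{2}$, and finally split the resulting integral into one piece controlled by the global $H^{-1}$ norm and one piece controlled via Cauchy--Schwarz by the $L^{2}$ hypothesis. By duality it suffices to bound $\int_{K_{1}} f\, d\nu$ uniformly over $f\in H^{1}(K_{1})$ with $\|f\|_{H^{1}}\leq 1$.

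First I would fix $\epsilon>0$ small, depending only on $K_{1}$ and $K_{2}$, so that the $\epsilon$-neighborhood $(K_{1})_{\epsilon}$ is contained in $K_{2}$ and $\epsilon<\epsilon_{*}$ in the sense of Lemma \ref{extensionlemma2}; this is possible because $K_{1}$ is properly contained in $K_{2}$ and has $C^{2}$ boundary. Applying that lemma yields $f_{\epsilon}\in H^{1}(\mathbf{R}^{d})$ with $f_{\epsilon}|_{K_{1}}=f$, $\mbox{supp}\,f_{\epsilon}\subset\overline{(K_{1})_{\epsilon}}\subset K_{2}$, and $\|\nabla f_{\epsilon}\|_{L^{2}}\leq C\epsilon^{-1/2}\|f\|_{H^{1}}$. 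Inspecting Step 1 of that lemma's proof, the local reflection construction also gives the refined estimate $\|f_{\epsilon}\|_{L^{2}((K_{1})_{\epsilon}\setminus K_{1})}\leq C\sqrt{\epsilon}\,\|f\|_{L^{2}(K_{1})}$, which is the quantitative ingredient needed in the next step.

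Next I would split
\begin{equation*}
\int_{K_{1}} f\, d\nu \;=\; \int_{\mathbf{R}^{d}} f_{\epsilon}\, d\nu \;-\; \int_{(K_{1})_{\epsilon}\setminus K_{1}} f_{\epsilon}\, d\nu.
\end{equation*}
The first term is controlled by the definition of $\|\nu\|_{H^{-1}}$, which extends from $C_{0}^{\infty}$ to compactly supported $H^{1}$ functions by density: $|\int_{\mathbf{R}^{d}} f_{\epsilon}\,d\nu| \leq \|\nabla f_{\epsilon}\|_{L^{2}} \|\nu\|_{H^{-1}} \leq C \epsilon^{-1/2}\|\nu\|_{H^{-1}}\|f\|_{H^{1}}$. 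The second term is controlled by Cauchy--Schwarz together with the hypothesis $\nu|_{K_{2}\setminus K_{1}}\in L^{2}$: $|\int_{(K_{1})_{\epsilon}\setminus K_{1}} f_{\epsilon}\,d\nu| \leq \|f_{\epsilon}\|_{L^{2}((K_{1})_{\epsilon}\setminus K_{1})}\|\nu|_{K_{2}\setminus K_{1}}\|_{L^{2}} \leq C\sqrt{\epsilon}\,\|\nu|_{K_{2}\setminus K_{1}}\|_{L^{2}}\|f\|_{H^{1}}$. Taking the supremum over admissible $f$ produces
\begin{equation*}
\|\nu|_{K_{1}}\|_{H^{-1}(K_{1})} \;\leq\; \frac{C_{1}}{\sqrt{\epsilon}}\|\nu\|_{H^{-1}} \;+\; C_{2}\sqrt{\epsilon}\,\|\nu|_{K_{2}\setminus K_{1}}\|_{L^{2}}.
\end{equation*}
Fixing $\epsilon$ once and for all gives an inequality of the form $\|\nu|_{K_{1}}\|_{H^{-1}(K_{1})}\leq A\|\nu\|_{H^{-1}}+B$ with $A=A(K_{1},K_{2})$ and $B=B(K_{1},K_{2},\|\nu|_{K_{2}\setminus K_{1}}\|_{L^{2}})$, which rearranges into the advertised multiplicative form $\|\nu\|_{H^{-1}}\geq c\,\|\nu|_{K_{1}}\|_{H^{-1}(K_{1})}$ by allowing $c$ to depend on $\|\nu|_{K_{2}\setminus K_{1}}\|_{L^{2}}$ as the hypothesis permits.

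The main obstacle will be extracting the sharp annular $L^{2}$ estimate $\|f_{\epsilon}\|_{L^{2}((K_{1})_{\epsilon}\setminus K_{1})}\leq C\sqrt{\epsilon}\|f\|_{L^{2}(K_{1})}$ from Lemma \ref{extensionlemma2}, since its stated conclusion only gives $\|f_{\epsilon}\|_{L^{2}(\mathbf{R}^{d})}\leq(1+k\sqrt{\epsilon})\|f\|_{L^{2}}$; the stronger localized version is visible in the proof but must be tracked explicitly through the flat-boundary reflection in Step 1 and reassembled through the partition of unity in Step 2. Once this refinement is in hand, the rest of the argument is a routine duality-plus-splitting calculation.
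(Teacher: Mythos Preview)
Your approach is essentially the paper's: pick a near-optimal test function for the $H^{-1}(K_{1})$ norm, extend it via Lemma~\ref{extensionlemma2} to a function supported in $(K_{1})_{\epsilon}\subset K_{2}$, and split off the annular contribution using Cauchy--Schwarz together with the $L^{2}$ hypothesis. You also correctly flag that the localized estimate $\|f_{\epsilon}\|_{L^{2}((K_{1})_{\epsilon}\setminus K_{1})}\leq C\sqrt{\epsilon}\,\|f\|_{L^{2}}$ has to be read off from the proof of Lemma~\ref{extensionlemma2} rather than its statement; the paper uses exactly this bound without further comment.

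The one genuine gap is your final step. An inequality of the form $X\leq AY+B$ does \emph{not} rearrange into $Y\geq cX$ for any $c$ depending only on $A$ and $B$: take $Y=0$ and $X=B$. So ``fixing $\epsilon$ once and for all'' and then invoking the permitted dependence of $c$ on $M$ does not close the argument. What the paper does instead is keep $\epsilon$ as a free parameter in your displayed inequality
\[
\|\nu|_{K_{1}}\|_{H^{-1}(K_{1})}\;\leq\;\frac{C_{1}}{\sqrt{\epsilon}}\,\|\nu\|_{H^{-1}}\;+\;C_{2}\sqrt{\epsilon}\,M,
\]
normalize so that $\|\nu|_{K_{1}}\|_{H^{-1}(K_{1})}=1$ (both sides are homogeneous of degree one in $\nu$), and then choose $\sqrt{\epsilon}$ proportional to $1/M$ so that $C_{2}\sqrt{\epsilon}M\leq\tfrac{1}{2}$. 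This absorbs the additive term and yields $\|\nu\|_{H^{-1}}\geq c_{2}\sqrt{\epsilon}\cdot\tfrac{1}{2}$ with $\sqrt{\epsilon}\sim 1/M$, which is the advertised multiplicative bound. In short: do not fix $\epsilon$ geometrically; let it depend on $M$.
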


\begin{proof}

First, assume that 
\begin{equation}
    \parallel \nu|_{K_{1}} \parallel_{H^{-1}(K_{1})}=1.
\end{equation}
For the general case, we can apply this result to
\begin{equation}
    \widetilde{\nu}=\frac{1}{\parallel \nu|_{K_{1}} \parallel_{H^{-1}(K_{1})}} \nu.
\end{equation}

Let $\epsilon_{*}$ be as in lema \ref{extensionlemma} for $K_{1}$, and let $\widehat{\epsilon} < \epsilon_{*}$ be such that
\begin{equation}
    K_{1}^{\widehat{\epsilon}} \subset K_{2},
\end{equation}
where
\begin{equation}
        K_{1}^{\widehat{\epsilon}}=\{ x \in \mathbf{R}^{d}| d(x,K_{1}) < \widehat{\epsilon}\}.
\end{equation}
We know that $\widehat{\epsilon}$ exists since $K_{1}$ is properly contained in $K_{2}.$

Let $\varphi \in H^{1}(K_{1})$ be such that
\begin{equation}
    \begin{split}
        \int_{K_{1}} \nu \varphi &\geq \frac{1}{2} \parallel \nu|_{K_{1}} \parallel_{H^{-1}(K_{1})}\\
        &=\frac{1}{2}. 
    \end{split}
\end{equation}
and 
\begin{equation}
    \parallel \varphi \parallel_{H^{1}}=1.
\end{equation}
For any $\epsilon< \widehat{\epsilon},$ consider an extension $\varphi_{\epsilon}$ of $\varphi$ as in lemma \ref{extensionlemma}. Note that 
\begin{equation}
    \begin{split}
       \left| \int_{K_{1}^{\epsilon} \setminus K_{1}} \nu \varphi_{\epsilon} \right| &\leq \parallel \nu|_{K_{1}^{\epsilon} \setminus K_{1}} \parallel_{L^{2}} \parallel \varphi_{\epsilon}|_{K_{1}^{\epsilon} \setminus K_{1}}
        \parallel_{L^{2}}\\
        &\leq c_{1}\sqrt{\epsilon}M,
    \end{split}
\end{equation}
where 
\begin{equation}
    M=\parallel \nu|_{K_{2} \setminus K_{1}} \parallel_{L^{2}}
\end{equation}
and $c_{1}$ depends only on $K_{1}.$
We then have that
\begin{equation}
    \begin{split}
       \left| \int_{\mathbf{R}^{d}} \nu \varphi_{\epsilon} \right|&= \left| \int_{K_{1}^{\epsilon}} \nu \varphi_{\epsilon} \right|\\
        &=\left| \int_{K_{1}} \nu \varphi+\int_{K_{1}^{\epsilon} \setminus K_{1}} \nu \varphi_{\epsilon} \right|\\
        &\geq \frac{1}{2}-c_{1}M\sqrt{\epsilon}.
    \end{split}
\end{equation}
And hence, we have
\begin{equation}
    \begin{split}
         \parallel \nu \parallel_{H^{-1}}&\geq \frac{1}{\parallel \nabla \varphi_{\epsilon} \parallel_{L^{2}}} \int_{\mathbf{R}^{d}} \nu \varphi_{\epsilon}\\
         &\geq c_{2}\sqrt{\epsilon}\left(\frac{1}{2}-c_{1}M\sqrt{\epsilon}\right),
    \end{split}
\end{equation}
where $c_{2}$ depends on $K_{1}, K_{2}, \| \nu|_{K_{2} \setminus K_{1}} \|_{L^{2}}.$ Letting $\epsilon$ be small enough (for example $\sqrt{\epsilon}=\frac{1}{4 M c_{1}}$), we obtain the conclusion. 
\end{proof}

We now prove the main proposition used in the proof of the concentration inequality, which is a restatement of Proposition \ref{reducingH-1tocomactsets}:
\begin{proposition}\label{localizingH-1norm}
Let $\nu$ be a measure such that $\|\nu\|_{H^{-1}}$ and assume that there exists a compact set $K$ such that $\nu$ is nonpositive or nonnegative outside of $K,$ and the boundary of $K$ has $C^{2}$ regularity. Then there exists a compact set $K_{1}$ which contains $K,$ and a constant $c$ such that 
\begin{equation}
    \parallel \nu \parallel_{H^{-1}} \geq c \parallel \nu|_{K_{1}} \parallel_{H^{-1}(K_{1})}.
\end{equation}
Furthermore, $c$ and $K_{1}$ depend only on $K.$
\end{proposition}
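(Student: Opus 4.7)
The plan is to take $K_1 := K$ (which has $C^2$ boundary by hypothesis) and to transfer a near-optimal test function for the local norm to a test function for the global norm, using the sign-preserving extension Lemma \ref{extensionlemma2} together with the sign condition on $\nu$ outside $K_1$. By symmetry we may assume $\nu \geq 0$ on $\mathbf{R}^d \setminus K_1$. Set $\alpha := \|\nu|_{K_1}\|_{H^{-1}(K_1)}$ (assumed finite, since otherwise there is nothing to prove) and pick $\varphi \in H^1(K_1)$ with $\|\varphi\|_{H^1(K_1)} \leq 1$ and $\int_{K_1} \nu \varphi \geq \alpha/2$.

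The key trick is to decompose $\varphi = \varphi^+ - \varphi^-$ into positive and negative parts $\varphi^\pm := \max(\pm \varphi, 0)$. Standard truncation estimates give $\varphi^\pm \in H^1(K_1)$ with $\|\varphi^\pm\|_{H^1(K_1)} \leq \|\varphi\|_{H^1(K_1)} \leq 1$, and both have non-negative trace on $\partial K_1$. Since $\int_{K_1} \nu \varphi^+ - \int_{K_1} \nu \varphi^- \geq \alpha/2$, at least one of the two witnesses a contribution of size $\geq \alpha/4$ (in absolute value). Consider first the favourable case $\int_{K_1} \nu \varphi^+ \geq \alpha/4$: set $\psi := \varphi^+$, fix any $\epsilon \in (0, \epsilon_*(K_1))$, and apply Lemma \ref{extensionlemma2} to obtain $\psi_\epsilon \in H^1(\mathbf{R}^d)$ supported in $K_1^\epsilon$, with $\psi_\epsilon \geq 0$ on $K_1^\epsilon \setminus K_1$ and $\|\nabla \psi_\epsilon\|_{L^2} \leq C(K_1) \epsilon^{-1/2}$. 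Because $\nu \geq 0$ and $\psi_\epsilon \geq 0$ on the annulus, the annular pairing is non-negative and reinforces the bulk contribution:
\begin{equation*}
\int_{\mathbf{R}^d} \nu \psi_\epsilon = \int_{K_1} \nu \varphi^+ + \int_{K_1^\epsilon \setminus K_1} \nu \psi_\epsilon \geq \alpha/4,
\end{equation*}
whence $\|\nu\|_{H^{-1}} \geq (\alpha/4)/\|\nabla \psi_\epsilon\|_{L^2} \geq c(K)\, \alpha$ after approximating $\psi_\epsilon$ by $C_0^\infty$ functions in the $H^1$-norm.

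The remaining case, $\int_{K_1} \nu \varphi^- \leq -\alpha/4$, is the main technical obstacle: reinforcement would now require the extension to be $\leq 0$ on the annulus, but any $H^1$-extension of $\varphi^- \geq 0$ must be non-negative near $\partial K_1$, so the naive choice $f := -\psi_\epsilon$ leaves a possible cancellation between bulk and annular contributions. One resolves this by splitting further on the size of $B := \int_{K_1^\epsilon \setminus K_1} \nu \psi_\epsilon \geq 0$: if $B \leq \alpha/8$, then $f = -\psi_\epsilon$ already satisfies $|\int \nu f| \geq \alpha/4 - \alpha/8 = \alpha/8$; if instead $B > \alpha/8$, then $\nu$ concentrates significant mass on the annulus itself, and a test function of the form $\eta \,\psi_\epsilon$ (or a suitable re-extension on a wider tube), where $\eta$ is a smooth cutoff vanishing on $K_1$ and equal to $1$ away from $\partial K_1$, captures this annular mass while keeping $\|\nabla(\eta \psi_\epsilon)\|_{L^2} = O(\epsilon^{-1/2})$ (the contribution $\psi_\epsilon \nabla \eta$ is controlled by the trace-layer bound $\|\psi_\epsilon\|_{L^2(\text{transition})}^2 \leq C \epsilon$ from Lemma \ref{extensionlemma2}). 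In all cases the resulting test function has gradient norm $O(\epsilon^{-1/2})$ and pairing $\geq c \alpha$ against $\nu$, so fixing any $\epsilon < \epsilon_*(K_1)$ yields $\|\nu\|_{H^{-1}} \geq c(K)\, \alpha$ with $c(K)$ and $K_1 = K$ depending only on $K$.
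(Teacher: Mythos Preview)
Your Case 1 and Case 2a are fine, but Case 2b has a real gap. The claim that $\eta\,\psi_\epsilon$ ``captures this annular mass'' is not justified: you only know $B=\int_{K_1^\epsilon\setminus K_1}\nu\,\psi_\epsilon>\alpha/8$, and nothing prevents this pairing from being concentrated in the layer $K_1^{\delta}\setminus K_1$ where your cutoff $\eta$ is still close to $0$. Concretely, if $\nu$ on the annulus is (an approximation of) a surface measure sitting at distance $\epsilon'$ from $\partial K_1$ with $\epsilon'\ll\epsilon$, then $\eta\approx \epsilon'/\epsilon$ there and $\int\nu\,\eta\,\psi_\epsilon=O(\epsilon'/\epsilon)\,B$, which can be made arbitrarily small while your gradient bound $\|\nabla(\eta\psi_\epsilon)\|_{L^2}=O(\epsilon^{-1/2})$ stays fixed. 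The alternative ``re-extension on a wider tube'' does not help either, since the quantity $B$ refers to the specific extension $\psi_\epsilon$ on the specific annulus $K_1^\epsilon\setminus K_1$; changing the tube changes the quantity you are trying to capture.

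The paper sidesteps the dichotomy altogether with a different construction. It takes $K_1=K_\epsilon$ rather than $K_1=K$, picks a near-optimizer $\varphi\in H^1(K_\epsilon)$ for $\|\nu|_{K_\epsilon}\|_{H^{-1}(K_\epsilon)}$, and replaces it by $\phi:=\max\{\varphi,\widetilde\varphi\}$, where $\widetilde\varphi$ is \emph{any} $H^1$-extension of $\varphi|_K$ with $\mathrm{supp}(\widetilde\varphi)\subset K_\epsilon$ (hence $\mathrm{tr}(\widetilde\varphi)=0$ on $\partial K_\epsilon$). This single move gives everything at once: $\phi=\varphi$ on $K$, $\phi\geq\varphi$ on the buffer $K_\epsilon\setminus K$ (so $\int_{K_\epsilon}\nu\phi\geq\int_{K_\epsilon}\nu\varphi$ since $\nu\geq0$ there), and $\mathrm{tr}(\phi)\geq0$ on $\partial K_\epsilon$. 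One then applies the sign-preserving extension of Lemma~\ref{extensionlemma2} outward from $K_\epsilon$ and is done, with no case split. The point you are missing is that working on the \emph{enlarged} set $K_\epsilon$ gives an interior buffer zone where $\nu$ already has a sign, and the $\max$ trick lets you raise the test function there to force a nonnegative trace without touching its values on $K$.
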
 

The proof of proposition \ref{localizingH-1norm} depends on the following claim.
\begin{claim}
Let $\nu$ be a measure such that $\|\nu\|_{H^{-1}} < \infty$ and assume that there exists a compact set $K$ such that $\nu$ is nonpositive or nonnegative outside of $K.$ Let 
\begin{equation}
    K_{\epsilon}=\{x \in \mathbf{R}^{d} | d(x,K) \leq \epsilon \}
\end{equation}

 Then there exists a compact set $K_{2}$ which contains $K,$ a constant $c_{2},$ and a function $\phi \in H^{1}(K_{2})$ such that 
\begin{equation}
    \int_{K_{2}} \nu \phi \geq c_{2} \parallel \nu|_{K_{2}} \parallel_{H^{-1}(K_{2})},
\end{equation}
$\parallel \phi \parallel_{H^{1}}=1$, and $\text{tr}(\phi)\geq 0.$ Furthermore, $c_{2}$ depends only on $K$ and $\epsilon.$
\end{claim}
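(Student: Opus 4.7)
The plan is to realize the $H^{-1}(K_2)$ norm via a Riesz representation and then modify the optimal test function to impose the boundary sign condition. Set $K_2 := \overline{K_\epsilon}$, smoothed slightly if necessary so that $\partial K_2$ is $C^2$, and assume without loss of generality that $\nu \geq 0$ on $K_2 \setminus K$ (otherwise work with $-\nu$). Put $N := \|\nu|_{K_2}\|_{H^{-1}(K_2)}$ and let $u \in H^1(K_2)$ be the Riesz representative of the linear functional $\phi \mapsto \int \phi\,d\nu$ with respect to the $H^1$ inner product, so $u$ weakly solves the Neumann problem $-\Delta u + u = \nu|_{K_2}$ in $K_2$ with $\partial u/\partial n = 0$ on $\partial K_2$ and satisfies $\|u\|_{H^1(K_2)} = N$, $\int u\,d\nu = N^2$. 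Testing the weak equation against $u^+$ and $-u^-$ and using $\nabla u \cdot \nabla u^{\pm} = |\nabla u^{\pm}|^2$ and $u\, u^{\pm} = \pm (u^{\pm})^2$ a.e. yields the key identities
\begin{equation*}
    \int u^+\,d\nu = \|u^+\|_{H^1}^2, \qquad \int u^-\,d\nu = -\|u^-\|_{H^1}^2,
\end{equation*}
together with the Pythagorean relation $\|u^+\|_{H^1}^2 + \|u^-\|_{H^1}^2 = N^2$.

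The construction then splits on which of $u^{\pm}$ carries the larger fraction of the norm. If $\|u^+\|_{H^1} \geq N/\sqrt{2}$, take $\phi := u^+/\|u^+\|_{H^1}$: it is non-negative (hence its trace is non-negative), has unit $H^1$ norm, and $\int \phi\,d\nu = \|u^+\|_{H^1} \geq N/\sqrt{2}$, giving the claim with $c_2 = 1/\sqrt{2}$. Otherwise $\|u^-\|_{H^1} > N/\sqrt{2}$; the obvious candidate $-u^-/\|u^-\|_{H^1}$ has the right integral but non-positive trace, so we fix the trace by cutting off. Choose a smooth cutoff $\chi$ on $K_2$ with $\chi \equiv 1$ on $K$, $\chi \equiv 0$ in a neighborhood of $\partial K_2$, $0 \leq \chi \leq 1$, and $|\nabla \chi| \leq C(K)/\epsilon$, and set $\phi := -u^-\chi/\|u^-\chi\|_{H^1}$. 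Then $\phi$ vanishes on $\partial K_2$ (so its trace is $0$) and is non-positive in the interior; using that $(1-\chi)u^- \geq 0$ is supported on $K_2 \setminus K$, where $\nu \geq 0$, one obtains
\begin{equation*}
    -\int u^-\chi\,d\nu = \|u^-\|_{H^1}^2 + \int_{K_2 \setminus K}(1-\chi)u^-\,d\nu \geq \|u^-\|_{H^1}^2 \geq N^2/2.
\end{equation*}

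The remaining step, and the main obstacle, is the upper bound $\|u^-\chi\|_{H^1} \leq C(K,\epsilon)N$. The only non-trivial contribution is $\int (u^-)^2 |\nabla \chi|^2 \leq C\epsilon^{-2}\|u^-|_{K_2 \setminus K}\|_{L^2}^2$, so it suffices to prove $\|u^-|_{K_2 \setminus K}\|_{L^2} \leq C(K)N$. The plan here is a maximum-principle comparison: let $w \in H^1(K_2 \setminus K)$ solve $-\Delta w + w = 0$ in $K_2 \setminus K$ with Dirichlet data $w = u$ on $\partial K$ and Neumann data $\partial w/\partial n = 0$ on $\partial K_2$. Then $u-w$ satisfies zero Dirichlet data on $\partial K$, Neumann data on $\partial K_2$, and $-\Delta(u-w) + (u-w) = \nu|_{K_2 \setminus K} \geq 0$, so the weak maximum principle (combined with the Hopf lemma to exclude minima on $\partial K_2$) yields $u \geq w$ in $K_2 \setminus K$, hence $u^- \leq w^-$. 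Standard elliptic estimates for this mixed boundary value problem give $\|w\|_{H^1(K_2 \setminus K)} \leq C\|u|_{\partial K}\|_{H^{1/2}(\partial K)} \leq C\|u\|_{H^1(K_2)} = CN$ by the trace inequality, closing the bound on $\|u^-\chi\|_{H^1}$ and producing $c_2 = c_2(K,\epsilon) > 0$ with $\int \phi\,d\nu \geq c_2 N$.
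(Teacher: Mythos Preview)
Your argument is correct and takes a genuinely different route from the paper. The paper picks a near-optimal test function $\varphi \in H^{1}(K_{\epsilon})$, restricts it to $K$, re-extends that restriction to a function $\widetilde{\varphi}$ supported in $K_{\epsilon}$ with zero trace on $\partial K_{\epsilon}$ (via the quantitative extension Lemma~\ref{extensionlemma2}), and then sets $\phi = \max\{\varphi,\widetilde{\varphi}\}$. This maximum inherits nonnegative trace from $\widetilde{\varphi}$, equals $\varphi$ on $K$ (where $\widetilde{\varphi}=\varphi$ by construction), dominates $\varphi$ on $K_{\epsilon}\setminus K$ where $\nu \geq 0$, so the pairing with $\nu$ only increases, and its $H^{1}$ norm is controlled via $\max(a,b)=\tfrac{1}{2}(a+b+|a-b|)$. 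You instead exploit the PDE structure of the exact optimizer: the Riesz representative $u$ together with the orthogonality identities $\int u^{\pm}\,d\nu = \pm\|u^{\pm}\|_{H^{1}}^{2}$ reduces the problem to either a globally nonnegative candidate $u^{+}$ or a cut-off candidate $-\chi u^{-}$ with zero trace. This trades the custom extension lemma for a standard cutoff plus the Riesz representation theorem; both routes produce constants depending only on $K$ and $\epsilon$.

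One simplification: the maximum-principle comparison in your final step is unnecessary. You want $\|u^{-}|_{K_{2}\setminus K}\|_{L^{2}} \leq C N$, but already $\|u^{-}\|_{L^{2}(K_{2})} \leq \|u\|_{L^{2}(K_{2})} \leq \|u\|_{H^{1}(K_{2})} = N$, which gives
\begin{equation*}
\|u^{-}\chi\|_{H^{1}}^{2} \leq 2\|\nabla u^{-}\|_{L^{2}}^{2} + \bigl(1+2\|\nabla\chi\|_{L^{\infty}}^{2}\bigr)\|u^{-}\|_{L^{2}}^{2} \leq C(K,\epsilon)\,N^{2}
\end{equation*}
directly. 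The comparison argument also leans on pointwise regularity (the Hopf lemma) that a general $\nu \in H^{-1}$ need not supply, so it is cleaner simply to drop it.
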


\begin{proof}
Since
\begin{equation}
    \parallel \nu \parallel_{H^{-1}}= \parallel -\nu \parallel_{H^{-1}},
\end{equation}
we assume without loss of generality that $\nu$ is positive outside of $K.$
Note that 
\begin{equation}
    \parallel \nu|_{K_{\epsilon}} \parallel_{H^{-1}(K_{\epsilon})} < \infty,
\end{equation}
and hence, there exists some $\varphi \in H^{1}(K_{\epsilon})$ such that
\begin{equation}
    \parallel \varphi \parallel_{H^{1} } =1
\end{equation}
and 
\begin{equation}
    \int_{K_{\epsilon}} \nu \varphi \geq \frac{1}{2} \parallel \nu|_{K_{\epsilon}} \parallel_{H^{-1}(K_{\epsilon})}.
\end{equation}
Consider now $\overline{\varphi}=\varphi|_{K}.$ By the extension lemma, there exists an extension $\widetilde{\varphi}$ of $\overline{\varphi}$ such that 
\begin{equation}
    \text{supp} \left( \widetilde{\varphi} \right) \subset K_{\epsilon}
\end{equation}
and 
\begin{equation}
    \parallel \widetilde{\varphi} \parallel_{H^{1}} \leq C_{\epsilon},
\end{equation}
since 
\begin{equation}
    \parallel \varphi \parallel_{H^{1}(K_{\epsilon})} =1,
\end{equation}
where $C_{\epsilon}$ depends on $K$ and $\epsilon$. Note that $\text{tr}\left(\widetilde{\varphi}|_{K_{\epsilon}} \right) =0.$
Consider now 
\begin{equation}
      \phi=\max\{\varphi, \widetilde{\varphi}\}.
\end{equation}
Then since $\phi \geq \widetilde{\varphi},$ we know that $\text{tr}\left(\phi \right) \geq 0.$ We also know that 
\begin{equation}
\begin{split}
      \varphi (x)&=\phi(x)  \text{ for } x \in K \\
      \varphi (x)&\leq \phi(x)  \text{ for } x \in K_{\epsilon} \setminus K, \\
\end{split}
\end{equation}
which implies
\begin{equation}
    \int_{K_{\epsilon}} \nu \varphi \leq \int_{K_{\epsilon}} \nu \phi,
\end{equation}
since $\nu$ is positive outside of $K.$ Using the pointwise identity 
\begin{equation}
    \max\{\varphi, \widetilde{\varphi}\}=\frac{1}{2} \left( \varphi+ \widetilde{\varphi} + |\varphi- \widetilde{\varphi}|\right),
\end{equation}
along with triangle inequality, we get 
\begin{equation}
    \begin{split}
        \parallel \phi \parallel_{H^{1} } &\leq \parallel \varphi \parallel_{H^{1} } + \parallel \widetilde{\varphi} \parallel_{H^{1} }\\
        &\leq 1+C_{\epsilon}.
    \end{split}
\end{equation}
Taking $\widehat{\phi}=\frac{\phi}{ \parallel \phi \parallel_{H^{1} }}$, $c_{2}=\frac{1}{2(1+C_{\epsilon})}$ and $K_{2}=\overline{K}_{\epsilon}$ we obtain the result.
\end{proof}

We now turn to the proof of proposition \ref{localizingH-1norm}
\begin{proof}
(Of Proposition \ref{localizingH-1norm})
Again, since
\begin{equation}
    \parallel \nu \parallel_{H^{-1}}= \parallel -\nu \parallel_{H^{-1}},
\end{equation}
we assume without loss of generality that $\nu$ is positive outside of $K.$ Then by Claim \ref{extensionlemma} there exists a $\varphi \in H^{1}(K_{\epsilon})$ such that 
\begin{itemize}
    \item[$\bullet$] The function $\varphi$ has norm $1,$ i.e.
    \begin{equation}
        \parallel \varphi \parallel_{H^{1} }=1.
    \end{equation}
    \item[$\bullet$] The trace of $\varphi$ is positive, i.e.
    \begin{equation}
        \text{tr}(\varphi) \geq 0.
    \end{equation}
    \item[$\bullet$] We have that
    \begin{equation}
        \int_{K_{\epsilon}} \nu \varphi \geq  c \parallel \nu|_{K_{\epsilon}} \parallel_{H^{-1}(K_{\epsilon})}, 
    \end{equation}
    where $c$ depends only on $K$ and $\epsilon.$
\end{itemize}

By Claim \ref{extensionlemma}, there exists an extension $\widehat{\varphi}$ of $\varphi$ such that 
\begin{itemize}
    \item[$\bullet$] The support of $\widehat{\varphi}$ is contained in $\overline{K}_{1+\epsilon}$
    \item[$\bullet$] The norm of $\nabla \widehat{\varphi}$ is controlled by 
    \begin{equation}
        \parallel \nabla \widehat{\varphi} \parallel_{L^{2} } \leq k,
    \end{equation}
    where $k$ depends only on $K$ and $\epsilon.$
    \item[$\bullet$] We have that $\widehat{\varphi}$ is nonnegative in $K_{1+\epsilon} \setminus K_{\epsilon}.$
\end{itemize}
Since $\nu$ is positive outside of $K$ and $\widehat{\varphi}$ is positive outside of $K_{\epsilon},$ we have that 
\begin{equation}
    \int_{K_{\epsilon}} \nu \varphi \leq \int_{\mathbf{R}^{d}} \nu \widehat{\varphi}.
\end{equation}
Finally, we have that 
\begin{equation}
    \begin{split}
        \parallel \nu \parallel_{H^{-1}} &\geq \frac{1}{\parallel \nabla \widehat{\varphi} \parallel_{L^{2} } } \int_{\mathbf{R}^{d}} \nu \widehat{\varphi}\\
        &\geq \frac{1}{\parallel \nabla \widehat{\varphi} \parallel_{L^{2} } } \int_{K_{\epsilon}} \nu \varphi\\
        &\geq \frac{1}{k}  \parallel \nu|_{K_{\epsilon}} \parallel_{H^{-1}(K_{\epsilon})},
    \end{split}
\end{equation}
where $k$ depends only on $K$ and $\epsilon.$
\end{proof}

\section{Acknowledgements}

The author wants to thank Sylvia Serfaty, for many useful discussions.

The author has been funded by the Deutsche Forschungsgemeinschaft (DFG) - project number 417223351, and a McCracken Scholarship.

\bibliographystyle{plain}
\bibliography{bibliography.bib}

\end{document}